\documentclass[letterpaper,11pt]{amsart}
\usepackage{amsmath,amssymb,amsthm,pinlabel,tikz,hyperref,mathrsfs,color, thmtools}
\usepackage{verbatim}
\usepackage{float}
\usepackage{caption}
\usepackage{subcaption}
\usepackage{enumitem}
\newcommand{\nc}{\newcommand}
\nc{\dmo}{\DeclareMathOperator}
\dmo{\ra}{\rightarrow}
\dmo{\Prob}{\mathbb{P}}
\dmo{\E}{\mathbb{E}}
\dmo{\N}{\mathbb{N}}
\dmo{\Z}{\mathbb{Z}}
\dmo{\Q}{\mathbb{Q}}
\dmo{\R}{\mathbb{R}}
\dmo{\C}{\mathcal{C}}
\dmo{\X}{\mathcal{X}}
\dmo{\U}{\mathcal{U}}
\dmo{\T}{\mathcal{T}}
\dmo{\F}{\mathcal{F}}
\dmo{\AC}{\mathcal{AC}}
\dmo{\w}{\omega}
\dmo{\MIN}{\mathcal{MIN}}
\dmo{\Mod}{Mod}
\dmo{\PMod}{PMod}
\dmo{\PMF}{\mathcal{PMF}}
\dmo{\Mat}{Mat}
\dmo{\supp}{supp}
\dmo{\UE}{\mathcal{UE}}
\dmo{\vol}{vol}
\dmo{\B}{B}
\dmo{\PB}{PB}
\dmo{\PR}{PSL(2,\mathbb{R})}
\dmo{\GL}{GL(k, \mathbb{C})}
\dmo{\SL}{SL(2, \mathbb{Z})}
\dmo{\Isom}{Isom}
\dmo{\RP}{\mathbb{R} \mathrm{P}}
\dmo{\I}{\mathcal{I}}
\dmo{\el}{\ell_{\C}}
\dmo{\NN}{\mathcal{N}}
\dmo{\rk}{rank}
\dmo{\tr}{tr}
\dmo{\llangle}{\langle\langle}
\dmo{\rrangle}{\rangle\rangle}
\dmo{\Unif}{Unif}
\dmo{\Out}{Out}
\dmo{\sumRho}{\mathcal{N}}
\dmo{\stopping}{\vartheta}

\usetikzlibrary{decorations.markings, patterns}
\tikzset{->-/.style={decoration={
  markings,
  mark=at position #1 with {\arrow{>}}},postaction={decorate}}}

\nc{\nt}{\newtheorem}

\nt{theorem}{Theorem}

\newtheorem{thm}{{\bf Theorem}}[section]
\newtheorem{lem}[thm]{{\bf Lemma}}
\newtheorem{cor}[thm]{{\bf Corollary}}
\newtheorem{prop}[thm]{{\bf Proposition}}
\newtheorem{fact}[thm]{Fact}

\newtheorem{claim}[thm]{Claim} 
\newtheorem{remark}[thm]{Remark}

\newtheorem{question}[thm]{Question}

\newtheorem{definition}[thm]{Definition}
\numberwithin{equation}{section}

\title[Genericity of pseudo-Anosovs]{Pseudo-Anosovs are exponentially generic in mapping class groups}

\date{\today}
\author{Inhyeok Choi}

\address{%
		Department of Mathematical Sciences, KAIST\\
		291 Daehak-ro Yuseong-gu, Daejeon, 34141, South Korea 
}
\email{%
        inhyeokchoi@kaist.ac.kr
        }

\begin{document}
\begin{abstract}
Given a finite generating set $S$, let us endow the mapping class group of a closed hyperbolic surface with the word metric for $S$. We discuss the following question: does the proportion of non-pseudo-Anosov mapping classes in the ball of radius $R$ decrease to 0 as $R$ tends to infinity? We show that any finite subset $S'$ of the mapping class group is contained in a finite generating set $S$ such that this proportion decreases exponentially. Our strategy applies to weakly hyperbolic groups and does not refer to the automatic structure of the group.

\noindent{\bf Keywords.} Mapping class group, pseudo-Anosov map, random walk

\noindent{\bf MSC classes:} 20F67, 30F60, 57K20, 57M60, 60G50
\end{abstract}

\maketitle

%
%

\section{Introduction}	\label{sec:introduction}

Let $\Sigma$ be a closed hyperbolic surface. We denote by $\Mod(\Sigma)$, $\T(\Sigma)$ and $\mathcal{C}(\Sigma)$ the mapping class group, the Teichm{\"u}ller space and the curve complex of $\Sigma$, respectively. When $X$ is a Gromov hyperbolic space or $\T(\Sigma)$ and $g \in \textrm{Isom}(X)$, we denote by $\tau_{X}(g)$ the (asymptotic) translation length of $g$. For a group $G$ generated by a finite set $S$, we denote by $B_{S}(n)$ the ball of radius $n$ with respect to the word metric for $S$. We also denote by $\partial B_{S}(n)$ the corresponding sphere of radius $n$. Our main result is as follows.

\begin{theorem}[Translation length grows linearly]\label{thm:generic}
Let $X$ be either a Gromov hyperbolic space or $\T(\Sigma)$. Let also $G$ be a finitely generated non-elementary subgroup of $\Isom(X)$ and $S' \subseteq G$ be a finite subset. Then there exist $L, K>0$ and a finite generating set $S \supseteq S'$ of $G$ such that \[
\frac{\#\{g \in B_{S}(n) : \tau_{X}(g) \le Ln\}}{\#B_{S}(n)} \le K e^{-n/K}
\]
holds for each $n$.
\end{theorem}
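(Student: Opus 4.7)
The plan is to exploit the freedom in choosing the generating set $S$: I will enlarge $S'$ by adjoining a large family of loxodromic elements that satisfy a strong ping-pong condition, and then use a pivoting argument on geodesic words.

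First I would fix an arbitrary finite generating set $T \supseteq S'$ of $G$. Because $G$ is non-elementary in $\Isom(X)$, there exist loxodromic elements with transverse attracting/repelling fixed points (or independent invariant foliations, in the case $X = \T(\Sigma)$). Taking sufficiently high powers, I build a family $\mathcal{P} = \{g_1^{\pm 1}, \ldots, g_k^{\pm 1}\}$ of loxodromic elements with the property that any alternating product of these letters, with no two consecutive inverses, is again loxodromic with translation length proportional to the number of letters. The generating set I take is $S = T \cup \mathcal{P}$; by tuning $k$ and the powers, I can make the Schottky letters abundant relative to the rest of the generating set.

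The core of the argument is then a pivoting construction applied to a choice of geodesic word $w(g) = s_1 \cdots s_m$ for each $g \in B_S(n)$, where $m \le n$. I would call a position $i$ a \emph{pivot} when $s_i \in \mathcal{P}$ and when $w(g)$ does not spoil the Schottky geometry at position $i$; to make this precise in both the Gromov-hyperbolic and the $\T(\Sigma)$ settings, one uses the Morse/contracting property of axes of loxodromic elements, or the subsurface projection machinery for Teichm{\"u}ller space. The Schottky hypothesis then guarantees that each pivot produces a bounded-below displacement along a common direction, so that having at least $cn$ pivots forces $\tau_X(g) \ge L n$ for some $L > 0$. The remaining task is to show that the set of $g \in B_S(n)$ with fewer than $cn$ pivots is exponentially small in $\# B_S(n)$.

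The main obstacle is precisely this last counting step, for two reasons: elements of $B_S(n)$ are group elements rather than free words (one has to control cancellations), and the paper forbids using any automatic structure for $G$ (which is unavailable for $\Mod(\Sigma)$). My plan to bypass this is to compare the counting measure on $B_S(n)$ with a random-walk measure supported on $S$: by choosing $k$ large enough, the entropy of the uniform distribution on $S$ is dominated by its Schottky part, and a Gou{\"e}zel-style pivoting argument for the random walk then yields exponential concentration of the displacement. Transferring this concentration from the random walk to the spherical counting measure is the delicate step that I expect to occupy most of the work; it is where the choice to enlarge $S$ by many independent Schottky generators, rather than work with an arbitrary generating set, pays off, because it is what makes the random-walk model a good proxy for counting in balls.
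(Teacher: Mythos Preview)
Your overall architecture---enlarge $S'$ by a large Schottky family and run a Gou\"ezel-type pivoting argument---is exactly the paper's strategy. But two of your steps contain genuine gaps.

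\textbf{Pivots give displacement, not translation length.} Your sentence ``having at least $cn$ pivots forces $\tau_X(g) \ge Ln$'' is false as stated. A linear number of pivots in a word $s_1\cdots s_n$ forces $d(o, s_1\cdots s_n\,o)$ to grow linearly, but $\tau_X$ can still be tiny: in the free group, $a^{-1}b^{-1}a^3ba^2b^{-2}a^{-2}b^{-1}a^{-3}ba$ has displacement $18$ and translation length $2$. The paper devotes a separate section to this. After establishing that $\#P_n^{\ast}(\mathbf g)\ge n/5$ outside an exponentially small event, it partitions paths into equivalence classes under pivoting at the \emph{first} (or last) $\lfloor n/12\rfloor$ pivotal slots, depending on which end carries less of the total displacement, and proves (Lemma~\ref{lem:trLength}) that within each class only a $(0.2475\,\#S)^{-\lfloor n/24\rfloor}$-fraction of choices yield $\tau_X(\bar\w_n)<K'n/12$. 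The point is that early/late pivots control how the segment $[o,\w_n o]$ aligns with its $\w_n$-translates, which is what $\tau_X$ actually measures. Your proposal is missing this second layer of pivoting entirely.

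\textbf{The transfer to counting is simpler than you think, but not the way you describe.} You frame the passage from the random walk to the ball count as a comparison of measures; that is both vague and unnecessary. The paper never compares the two measures. Instead it does two elementary things. First, it puts the identity $e$ into $S$; then every $g\in B_S(n)$ is represented by \emph{some} word of length exactly $n$, so
\[
\#\{g\in B_S(n):\tau_X(g)\le Ln\}\ \le\ \#\{\mathbf g\in S^n:\tau_X(\w_n)\le Ln\}\ =\ (\#S)^n\cdot\Prob(\tau_X(\w_n)\le Ln).
\]
Second, for the denominator it uses that the Schottky letters generate a free semigroup: reduced words in $S_0=S_1\cup S_1^{-1}$ give at least $(\#S_0-1)^n\ge(0.99\,\#S)^n$ \emph{distinct} elements of $B_S(n)$ (Lemma~\ref{lem:almostInj}). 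The ratio $(\#S)^n/(0.99\,\#S)^n$ is then swallowed by the exponential decay of $\Prob(\tau_X(\w_n)\le Ln)$, which is where the freedom to take $\#S_1$ large is actually cashed in. In particular, you should \emph{not} choose a geodesic representative $w(g)$ for each $g$ and run pivoting on that word: pivoting changes the word, the pivoted word need not be geodesic, and you lose control of which group element it represents. Work with all $n$-step words from the start.
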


Non-pseudo-Anosov mapping classes have translation length zero in $\C(\Sigma)$. As a result, we affirmatively answer the following version of a folklore conjecture, at least for infinitely many generating sets $S$.

\begin{cor}[Genericity of pAs, {cf. \cite[Conjecture 3.15]{farb2006problems}}]
Let $G$ be a finitely generated non-elementary subgroup of $\Mod(\Sigma)$. Then there exists a finite generating set $S \subseteq G$ such that the proportion of non-pseudo-Anosov mapping classes in the ball $B_{S}(n)$ decays exponentially as $n \rightarrow \infty$.
\end{cor}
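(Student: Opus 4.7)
The plan is to apply Theorem A with $X = \C(\Sigma)$, the curve complex of $\Sigma$. By the Masur--Minsky theorem, $\C(\Sigma)$ is Gromov hyperbolic, and $\Mod(\Sigma)$ acts on it by isometries. The key input from Nielsen--Thurston theory is that a mapping class $g$ acts loxodromically on $\C(\Sigma)$ (equivalently, $\tau_{\C(\Sigma)}(g) > 0$) if and only if $g$ is pseudo-Anosov; reducible and finite-order elements have vanishing translation length on the curve complex. Consequently, every non-pseudo-Anosov element of $B_{S}(n)$ lies in the set $\{g \in B_{S}(n) : \tau_{\C(\Sigma)}(g) \le Ln\}$ for any $L > 0$.

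Before invoking Theorem A, I need to verify that $G$, assumed non-elementary as a subgroup of $\Mod(\Sigma)$, acts non-elementarily on $\C(\Sigma)$ as a subgroup of $\Isom(\C(\Sigma))$. This follows from the Ivanov--McCarthy subgroup trichotomy: a non-elementary subgroup of $\Mod(\Sigma)$ contains two independent pseudo-Anosov elements, and these act as independent loxodromic isometries of $\C(\Sigma)$, producing a non-elementary action. I then fix any finite generating set $S'$ of $G$ and apply Theorem A to the data $(X, G, S') = (\C(\Sigma), G, S')$, obtaining constants $L, K > 0$ and an enlarged finite generating set $S \supseteq S'$ with
\[
\frac{\#\{g \in B_{S}(n) : \tau_{\C(\Sigma)}(g) \le Ln\}}{\#B_{S}(n)} \le K e^{-n/K}.
\]

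Combining the two observations yields the same exponential decay bound on the proportion of non-pseudo-Anosov mapping classes in $B_{S}(n)$, which is precisely the corollary. There is essentially no obstacle here beyond citing Theorem A together with the Masur--Minsky/Nielsen--Thurston dictionary; all the genuine difficulty is absorbed into the proof of Theorem A, and the corollary itself is a short structural deduction. The only sliver of care required is ensuring that the author's notion of \emph{non-elementary subgroup of} $\Mod(\Sigma)$ matches (or at least implies) \emph{non-elementary action on} $\C(\Sigma)$; the trichotomy above bridges these two definitions cleanly.
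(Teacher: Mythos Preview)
Your proposal is correct and follows essentially the same route as the paper: the paper simply notes that non-pseudo-Anosov mapping classes have translation length zero on $\C(\Sigma)$ and then reads off the corollary from Theorem~A applied to $X=\C(\Sigma)$. Your added remarks on Masur--Minsky hyperbolicity and the Ivanov--McCarthy trichotomy just make explicit the background facts the paper leaves implicit.
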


Note that $\Mod(\Sigma)$ can act on both $\T(\Sigma)$ and $\mathcal{C}(\Sigma)$. Comparing the translation lengths of mapping classes on these two spaces is an interesting question. Thanks to the linear growth in Theorem \ref{thm:generic}, we can deduce:

\begin{cor}\label{cor:generic}
Let $G$ be a finitely generated non-elementary subgroup of $\Mod(\Sigma)$ and $S' \subseteq G$ be a finite subset. Then there exist $L, K>0$ and a finite generating set $S \supseteq S'$ of $G$ such that the following holds. For \[
P:= \left\{ g \in G : \frac{1}{L} \le \frac{\tau_{\T(\Sigma)}(g) }{\tau_{\mathcal{C}(\Sigma)}(g)}\le L \right\},
\]
and each $n$, we have
\[
\frac{\#B_{S}(n) \cap P^{c}}{\#B_{S}(n)} \le K e^{-n/K}.
\]
\end{cor}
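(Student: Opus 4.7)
The plan is to deduce Corollary \ref{cor:generic} by applying Theorem \ref{thm:generic} to both isometric actions of $\Mod(\Sigma)$: on the Gromov hyperbolic curve complex $\C(\Sigma)$ and on $\T(\Sigma)$. The two essential ingredients are the trivial upper bound $\tau_X(g) \le D_{X,S}\, |g|_S$, valid for any generating set $S$ and any basepoint $x_0 \in X$ with $D_{X,S} = \max_{s \in S} d_X(x_0, s x_0)$, together with the linear lower bound on translation length supplied by Theorem \ref{thm:generic} itself.

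The core step is to produce a single generating set $S \supseteq S'$ and constants $L', K' > 0$ for which, outside a subset of $B_S(n)$ of relative size at most $K' e^{-n/K'}$, every $g$ satisfies both $\tau_{\C(\Sigma)}(g) \ge L' n$ and $\tau_{\T(\Sigma)}(g) \ge L' n$. The naive attempt is to invoke Theorem \ref{thm:generic} twice: first with $X = \C(\Sigma)$ and input $S'$, obtaining some $S_0 \supseteq S'$; then with $X = \T(\Sigma)$ and input $S_0$, obtaining $S \supseteq S_0$. The subtlety is that enlarging $S_0$ to $S$ alters the balls $B_S(n)$, so the $\C(\Sigma)$ bound from the first application is not automatically inherited. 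To fix this I would inspect the proof of Theorem \ref{thm:generic}, which presumably constructs $S$ by adjoining a Schottky-type family of pseudo-Anosov elements to $S'$, and verify that the underlying counting argument continues to yield linear growth of $\tau_{\C(\Sigma)}$ once one further adjoins the Schottky generators required for the $\T(\Sigma)$ action. What makes such a synthesis natural is that every pseudo-Anosov mapping class is loxodromic for both $\T(\Sigma)$ and $\C(\Sigma)$ simultaneously, so one family of Schottky generators can serve both purposes.

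Once the simultaneous linear lower bounds are in place, the corollary follows quickly. A union bound packages the two bad sets into one of proportion at most $2 K' e^{-n/K'}$ in $B_S(n)$, and for $g$ off this bad set one has $L' n \le \tau_{\C(\Sigma)}(g), \tau_{\T(\Sigma)}(g) \le D n$, where $D = \max(D_{\C(\Sigma),S}, D_{\T(\Sigma),S})$. The ratio $\tau_{\T(\Sigma)}(g)/\tau_{\C(\Sigma)}(g)$ therefore lies in $[L'/D, D/L']$, so setting $L = D/L'$ and $K = 2 K'$ (up to rescaling) places every such $g$ in $P$, which gives the desired exponential bound on $\# (B_S(n) \cap P^c)/\#B_S(n)$.

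The main obstacle is precisely the compatibility of the two applications of Theorem \ref{thm:generic}. I would expect the argument to rest on either a strengthened version of the theorem taking two actions simultaneously with a common generating set, or a robustness property of its proof under enlarging $S$ by an additional Schottky-type family. Once that point is secured, the remainder of the derivation is routine bookkeeping with the trivial upper bounds.
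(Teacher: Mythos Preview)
Your approach is correct and matches the paper's (implicit) derivation: the paper states the corollary as an immediate consequence of the linear growth in Theorem~\ref{thm:generic} and gives no separate proof. Your identification of the only subtlety---obtaining a \emph{single} generating set $S$ for which Theorem~\ref{thm:generic} applies simultaneously to the actions on $\T(\Sigma)$ and $\C(\Sigma)$---is exactly right, and your suggested resolution is the intended one: the Schottky family in Proposition~\ref{prop:Schottky} is built from words in two fixed pseudo-Anosovs $a,b$, which are loxodromic on both spaces, so by taking the parameters $N$ and $k$ large enough for both spaces one obtains a single set $S_{1}$ that is $(K_{\T},K'_{\T},\epsilon_{\T})$-Schottky for $\T(\Sigma)$ and $(K_{\C},K'_{\C},\epsilon_{\C})$-Schottky for $\C(\Sigma)$; then $S = S'' \cup S_{1}^{(2)} \cup S_{1}^{(-2)}$ is nicely populated by $S_{1}^{(\pm 2)}$ and the proof in Section~\ref{section:proof} runs verbatim for each space with this common $S$. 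After that, your bookkeeping with the trivial upper bound $\tau_{X}(g)\le D_{X,S}\,|g|_{S}$ and the union bound is exactly what is needed.
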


\subsection{History and related problems}

We remark that Theorem \ref{thm:generic} may not be optimal for Gromov hyperbolic spaces. Gekhtman, Taylor and Tiozzo have proved the genericity of loxodromics in hyperbolic groups acting on separable Gromov hyperbolic spaces, in terms of the word metric for \emph{any} finite generating set. This was generalized to relatively hyperbolic groups, RAAGs and RACGs with \emph{particular} finite generating sets (see \cite{gekhtman2018counting}, \cite{gekhtman2020graph}, \cite{gekhtman2020clt}). Gekhtman-Taylor-Tiozzo's example \cite[Example 1]{gekhtman2020graph} shows that the genericity of loxodromics is not achieved for all weakly hyperbolic groups with respect to all finite generating sets.

For relatively hyperbolic groups (and many more), Yang also established that loxodromics are exponentially generic when the action is proper and cocompact \cite{yang2020genericity}. Hence, at least for hyperbolic groups that admit a proper and cocompact action on Gromov hyperbolic spaces, Theorem \ref{thm:generic} is weaker than previous results in the sense that the finite generating set cannot be arbitrary. It is however stronger in the sense that \begin{enumerate}
\item it does not require the action of $G$ to be proper and cocompact,
\item it deals with exponential genericity with respect to a linearly growing threshold, not a static threshold.
\end{enumerate}
In fact, combining our strategy with the theory of Gekhtman, Taylor and Tiozzo yields the following.

\begin{prop}\label{prop:genericGekht}
Let $X$, $G$ be as in Theorem \ref{thm:generic} and $S$ be a finite generating set of $G$. Suppose moreover that $G$ itself is a hyperbolic group. Then there exists $\lambda >0$ such that the following hold. Below, $\nu$ denotes the Patterson-Sullivan measure with respect to $S$.\begin{enumerate}
\item For any $x \in X$ and $\nu$-a.e. $\eta \in \partial G$, if $(g_{n})_{n \ge 0}$ is a geodesic in $G$ converging to $\eta$, then \[
\lim_{n \rightarrow \infty} \frac{d_{X}(x, g_{n}x)}{n} = \lambda.
\]
\item For any $\epsilon > 0$, there exists $K>0$ such that \[
\frac{\#\{g \in \partial B_{S}(n) : \tau_{X}(g) \notin [\lambda-\epsilon, \lambda+\epsilon]\}}{\#\partial B_{S}(n)} \le K e^{-n/K}
\]
holds for each $n$.
\end{enumerate}
\end{prop}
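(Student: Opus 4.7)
The plan is to combine the Patterson--Sullivan theory for the hyperbolic group $G$ with Gekhtman--Taylor--Tiozzo \cite{gekhtman2018counting} and the pivot-based strategy behind Theorem \ref{thm:generic}.

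For part (1), I would first establish $\nu$-almost sure convergence of $d_{X}(x, g_{n} x)/n$ to a deterministic constant $\lambda \ge 0$. Model the geodesic flow of $G$ on a subset $\Omega$ of $(\partial G)^{2} \setminus \Delta$ equipped with a shift-invariant measure $\hat\nu$ built from $\nu$ in the standard way for hyperbolic groups. The displacement $\omega \mapsto d_{X}(x, g_{n}(\omega) x)$ is a subadditive cocycle for the shift, so Kingman's subadditive ergodic theorem combined with ergodicity of $\hat\nu$ delivers a deterministic limit. Positivity of $\lambda$ comes from the non-elementarity of the action on $X$, via the argument of \cite{gekhtman2018counting}. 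Projecting $\hat\nu$ onto the forward boundary yields (1) for any fixed basepoint.

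For part (2), I would upgrade this into an exponential large deviation estimate and then transfer it to a counting statement. Since $\nu$ is a Gibbs-type measure on $\partial G$ (Coornaert), the subadditive cocycle $d_{X}(x, g_{n}(\eta) x)$ concentrates exponentially around $\lambda n$: for every $\epsilon > 0$ there is $K_{1}(\epsilon) > 0$ with
\[
\nu \bigl\{ \eta : |d_{X}(x, g_{n}(\eta) x) - \lambda n| > \epsilon n \bigr\} \le K_{1} e^{-n/K_{1}}.
\]
Transferring to counting measure on $\partial B_{S}(n)$ is standard in the hyperbolic setting: by the shadow lemma, $\nu(\mathcal{O}_{x}(g)) \asymp e^{-h|g|_{S}}$ and $\#\partial B_{S}(n) \asymp e^{hn}$, where $h$ is the exponential growth rate of $G$. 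Hence the above $\nu$-bound translates into the same exponential bound for the proportion of $g \in \partial B_{S}(n)$ whose displacement deviates from $\lambda n$ by more than $\epsilon n$.

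The final step is passing from displacement to translation length. The upper bound $\tau_{X}(g) \le d_{X}(x, gx)$ is automatic. For the matching lower bound, I would invoke the pivot technique underlying Theorem \ref{thm:generic}: exponentially generic $g \in \partial B_{S}(n)$ admit linearly many Schottky-like pivots along a geodesic word representing them, forcing $g$ to act loxodromically on $X$ with an axis passing within $O(1)$ of $x$, so that $\tau_{X}(g) \ge d_{X}(x, gx) - O(1)$. Combined with the displacement concentration, this yields (2). The main obstacle will be running the pivot argument with an \emph{arbitrary} generating set $S$, rather than one we are free to enlarge as in Theorem \ref{thm:generic}: hyperbolicity of $G$, via a geodesic automaton, should provide the combinatorial uniformity needed to distribute pivots along generic geodesic words independently of the choice of $S$.
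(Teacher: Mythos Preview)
Your outline has the right high-level architecture, but there are two genuine gaps in part~(2), and both are resolved in the paper by the same mechanism you only gesture at in your final sentence.

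First, the exponential large deviation bound for displacement. You assert that because $\nu$ is Gibbs-type, the subadditive cocycle $d_X(x,g_n(\eta)x)$ concentrates exponentially around $\lambda n$. This is not a standard consequence of the Gibbs property plus subadditivity, and you give no argument for it. The paper does not attempt anything of this kind; instead it passes through the Gekhtman--Taylor--Tiozzo automaton $\Gamma$ and the associated Markov chain. For each recurrent vertex $v$, the \emph{loop} random walk generated by the first-return measure $\mu_v$ is an honest i.i.d.\ random walk with finite exponential moment, to which one can apply the existing large deviation theorems of \cite{boulanger2020large}, \cite{gouezel2021exp} (and \cite{choi2021clt} for $\T(\Sigma)$). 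Exponential control of return times and last return times then transfers these estimates to the full Markov chain, and the comparison between the Markov measure and the counting measure on $\partial B_S(n)$ (via the large-growth set $LG$) yields the sphere estimate. Your shadow-lemma transfer is morally related to this last step, but the crucial point is that the LDP itself is obtained \emph{after} converting to an i.i.d.\ problem via the automaton, not directly on $\partial G$.

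Second, the pivot argument. You correctly flag that running pivots with an arbitrary $S$ is the obstacle, but ``hyperbolicity via a geodesic automaton should provide combinatorial uniformity'' is not a proof. The paper's mechanism is specific: the loop semigroup $\Gamma_v$ at each recurrent vertex is non-elementary, hence contains independent loxodromics from which one builds a Schottky set \emph{consisting of loops at $v$}. Pivoting is then performed on the i.i.d.\ choices in the loop random walk, exactly as in Section~\ref{section:pivot}. One shows that exponentially generically a path accumulates linearly many Schottky slots in its initial $\epsilon n$ steps, and early pivoting (as in Section~\ref{section:trLength}) forces $\tau_X(g)$ close to $d_X(o,go)$. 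Without identifying where the i.i.d.\ structure lives, you cannot get started on this.

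Your approach to part~(1) via Kingman on the geodesic flow is a legitimate alternative to the Markov-chain route and would work for the almost-sure statement; the paper does not separate this out, since the exponential estimate in part~(2) already implies it.
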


Note that this implies the following. For any hyperbolic subgroup $G$ of $\Mod(\Sigma)$ and \emph{any} finite generating set $S$ of $G$, let $\lambda_{\T}$, $\lambda_{\C}$ be the escape rate of $G$ on $\T(\Sigma)$ and $\C(\Sigma)$, respectively, in terms of the Patterson-Sullivan measure for $S$. Then for any $\epsilon>0$, there exists $K$ such that \[
\frac{\#\{g \in B_{S}(n) : \frac{\lambda_{\T}}{\lambda_{\C}} - \epsilon \le \frac{\tau_{\T(\Sigma)}(g) }{\tau_{\mathcal{C}(\Sigma)}(g)}\le  \frac{\lambda_{\T}}{\lambda_{\C}} +\epsilon\}}{\#B_{S}(n)} \le K e^{-n/K}
\]
holds for each $n$. For the sake of completeness, we sketch the proof of Proposition \ref{prop:genericGekht} in Appendix \ref{section:appendix}.

Meanwhile, Theorem \ref{thm:generic} is new for $\Mod(\Sigma)$. The progress so far was that the proportion of pseudo-Anosov elements in the word metric ball stays bounded away from zero \cite{cumplido2018pA}. See \cite{yang2020genericity} and \cite{erlandsson2020pA} for counting pseudo-Anosovs in other viewpoints. We can further ask:

\begin{question}
Are pseudo-Anosovs exponentially generic with respect to any finite generating set? For example, are they exponentially generic with respect to Humphries' generators? If not, are they generic at least?
\end{question}

\begin{question}\label{question:Mod}
Does Proposition \ref{prop:genericGekht} hold for $G=\Mod(\Sigma)$ and at least one $S$?
\end{question}

Question \ref{question:Mod} is intimately related to the (geodesic) automaticity of $\Mod(\Sigma)$.

Let us finally mention a problem investigated by I. Kapovich. Let $\mu$ be a discrete measure on a group $G$. We define the \emph{non-backtracking random walk} generated by $\mu$ as follows. The first alphabet $g_{1}$ is chosen from $G$ with the law of $\mu$; for each $n \ge 2$, $g_{n}$ is chosen from $G \setminus \{g_{n-1}^{-1}\}$ with the law \[
\Prob(g_{n} = g) = \frac{1}{\mu(G \setminus \{g_{n-1}^{-1}\})} \mu(g).
\]
In this setting, Gekhtman, Taylor and Tiozzo proved that $\Prob(\w_{n}\,\,\textrm{is loxodromic})$ tends to 1 as $n \rightarrow \infty$ \cite[Theorem 2.8]{gekhtman2020graph}. With an adequate modification, our argument yields the following. 

\begin{prop}\label{prop:genericKapo}
Let $X$ be as in Theorem \ref{thm:generic}, and $\mu$ be a non-elementary discrete measure $\mu$ on $\textrm{Isom}(X)$. Consider the non-backtracking random walk $\w$ generated by $\mu$. Then there exists $L, K>0$ such that \[
\Prob\{ \tau_{X}(\w_{n}) \le Ln \}\le K e^{-n/K}
\]
holds for each $n$.
\end{prop}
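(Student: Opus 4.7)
The plan is to adapt the pivoting strategy behind Theorem~\ref{thm:generic} to the Markovian setting of a non-backtracking random walk. The key observation is that $(\w_{n})$ is a Markov chain whose transition kernel from state $g$ is $\mu$ restricted to $G \setminus \{g^{-1}\}$ and renormalized; up to losing at most one atom per step, this is almost indistinguishable from the i.i.d.\ walk driven by $\mu$, so the pivoting method should go through once we arrange enough slack in the Schottky construction.

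First, since $\mu$ is non-elementary, the semigroup generated by $\supp(\mu)$ contains a finite set $F=\{f_{1},\ldots,f_{k}\}$ of pairwise independent loxodromic elements forming a Schottky configuration (large pairwise Gromov products of fixed points, or the analogous Bestvina--Bromberg--Fujiwara condition on $\T(\Sigma)$). I would realize each $f_{i}$ as a fixed $\mu$-word of length $N$, enlarging $F$ so that $k \ge 3$ and so that the first letters of the chosen words vary across $F$. Then I prove a pivot lemma: conditioned on $\w_{1},\ldots,\w_{j}$, the increment $\w_{j}^{-1}\w_{j+N}$ equals some $f_{i}$ with probability at least $p_{0}>0$, uniformly in the past. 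The only new subtlety relative to the i.i.d.\ case is the constraint that the first step of the block cannot cancel $\w_{j}$, but since at least two elements of $F$ begin with letters different from $\w_{j}^{-1}$, the uniform lower bound $p_{0}$ survives. Partitioning the walk into blocks of length $N$, the indicators of ``good'' blocks stochastically dominate an i.i.d.\ $\mathrm{Bernoulli}(p_{0})$ sequence, so a Chernoff-type bound produces at least $cn$ good blocks with probability at least $1-K_{1}e^{-n/K_{1}}$.

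Finally I invoke the Schottky-to-translation-length step from the proof of Theorem~\ref{thm:generic}: if a product contains $m$ disjoint Schottky subwords from $F$ separated by arbitrary letters, then the product is loxodromic with $\tau_{X}(\cdot)\ge L'm$, via the standard ping-pong and Bounded Geodesic Image argument (or its $\T(\Sigma)$ analogue through strongly contracting axes). Applying this to $\w_{n}$ yields $\tau_{X}(\w_{n})\ge Ln$ off an event of probability at most $Ke^{-n/K}$. The main obstacle is the pivot lemma: one must verify that the Markovian prohibition at each block boundary does not collapse the Schottky options of the inserted word, which is precisely what the condition $k\ge 3$ and the variation of initial letters of the $f_{i}$ ensures. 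Once that is in place, the remainder is a direct transcription of the i.i.d.\ argument used for Theorem~\ref{thm:generic}, with the partial sums of pivot indicators replaced by their non-backtracking Markov analogues, for which the Chernoff bound still applies by the one-step dependence.
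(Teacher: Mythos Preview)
There is a genuine gap in the final step. The assertion ``if a product contains $m$ disjoint Schottky subwords from $F$ separated by arbitrary letters, then the product is loxodromic with $\tau_{X}(\cdot)\ge L'm$'' is false as a deterministic statement: the intermediate words may undo every Schottky progress (take, for instance, $w_{0}f_{1}w_{1}\cdots f_{m}w_{m}$ with $w_{i}=f_{i}^{-1}$, or a word that is a large commutator so that the displacement is huge while the translation length is tiny). No ping-pong or bounded geodesic image argument produces such a statement, and this is exactly the difficulty that the pivoting machinery of Sections~\ref{section:pivot}--\ref{section:trLength} is designed to overcome. What the paper actually establishes is not a bound from the \emph{number of Schottky occurrences}, but from the \emph{number of pivotal times}: a carefully chosen subset $P_{n}^{\ast}$ of Schottky steps along which the segments are provably witnessed by $[o,\w_{n}o]$ (Lemma~\ref{lem:extremal}), invariant under re-choosing the Schottky letters (Lemma~\ref{lem:pivotEquiv}), and of linear size outside an exponentially small event (Lemma~\ref{lem:pivotCondition}). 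The translation-length bound (Lemma~\ref{lem:trLength}) then comes from pivoting \emph{inside an equivalence class} at two well-placed pivotal times, not from counting Schottky blocks.

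Your Bernoulli/Chernoff step only produces linearly many Schottky \emph{occurrences}, which is the content of $\#\Theta(\mathbf{g})\ge n/3$ in the paper's proof and is the easy part. The substantive modification the paper alludes to is to rerun the construction of $P_{k}$, the pivoting equivalence $\mathcal{E}_{k}$, and Lemmas~\ref{lem:0thCasePivot}--\ref{lem:pivotCondition} under the Markov kernel of the non-backtracking walk: at each step one element of $\supp(\mu)$ is forbidden, so the counts ``at most $2$ bad choices'' and ``at most $3$ bad choices'' become ``at most $2$ (resp.\ $3$) bad choices out of $\supp(\mu)\setminus\{g_{n-1}^{-1}\}$'', and the probabilities $9/10$, $1/10^{j+1}$ survive because the Schottky set is taken large enough. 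One then repeats Lemma~\ref{lem:trLength} verbatim. Your observation about the block boundaries (that the forbidden letter removes at most one option) is correct and is precisely the slack needed, but it must be fed into the pivotal-time construction, not into a Bernoulli indicator of raw Schottky appearance.
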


\subsection{Strategy for Theorem \ref{thm:generic}}

 One approach to the counting problem is to utilize (geodesic) automatic structures of the group. Lacking such structures, we instead consider the random walk $\w$ on $G$ generated by the uniform measure on $S$. Then Gou{\"e}zel's and Baik-Choi-Kim's theories imply that the unwanted probability decays exponentially (\cite{gouezel2021exp}, \cite{baik2021linear}). There are at least two more theories that provide this exponential decay. One is Maher's theory \cite{maher2012exp} for random walks with bounded support, which led to Maher-Tiozzo's more general theory \cite{maher2018random}. Another one is Boulanger-Mathieu-Sert-Sisto's theory \cite{boulanger2020large} of large deviation principles for random walks with finite exponential moment.

Unavoidably, random walks cannot count lattice points in a one-to-one manner. If $S$ is nicely populated by the self-convolution of a Schottky set $S_{0}$, however, then we have a one-to-one correspondence between some portion of lattice points and (non-backtracking) paths of alphabets in $S_{0}$. This leads to the estimate $\#B_{S}(e, n) \sim \#(\textrm{paths from the random walk})\cdot r^{n}$ with $r \sim 1$. We arrive at the desired estimate by forcing the exponential decay of probability to be much faster than the decay of $r^{n}$.

Let us bring a toy example to explain how this strategy works. Let $S$ be a finite symmetric generating set of the free group $F_{2} \simeq \langle a, b\rangle$ of rank 2. Our goal is to compare the growth rate of \[\begin{aligned}
A(n) &:= \{a_{1} \cdots a_{n} : a_{i} \in S\}, \\
B(n) &:= \{a_{1} \cdots a_{n}: a_{i} \in S, \tau_{X}(a_{1} \cdots a_{n}) = 0\}.
\end{aligned}
\]
Here, $S$ contains elements that cancel out each other. This implies that although $A(n)$ does grow exponentially, its growth rate may not equal $\#S$. Moreover, even though any nontrivial word in $F_{2}$ corresponds to a loxodromic isometry on the Cayley graph, there can be some sequences of $n$ letters from $S$ whose composition is trivial.

These concerns disappear if $S$ were an alphabet for a free subsemigroup of $F_{2}$. Letters in $S$ do not cancel out each other, and we have $A(n) \sim (\#S)^{n}$ and $B(n) = 0$ for $n \ge 1$. The contrast between the two growth rates persists even when a few letters of $S$ cancel out each other. For example, let us take \[
S = \{ a^{2}, ab, ba, a^{-2}, b^{-1}a^{-1}, a^{-1} b^{-1}\}.
\] Clearly $a^{2}$ and $a^{-2}$, $ab$ and $b^{-1}a^{-1}$, and $ba$ and $a^{-1}b^{-1}$ cancel out each other. Nonetheless, $A(n)$ still grows exponentially with the growth rate $\#S - 1$. On the other hand, the growth rate of $B(n)$ is $\#S$ times the spectral radius of the simple random walk on the homogenous tree of degree 6, which is $\frac{\sqrt{5}}{3}$. The situation gets better and better as $S$ becomes larger and larger. For a symmetric free generating set $S$ of a subgroup $G$ of $F_{2}$ with $\operatorname{rk}(G) = d$, we have $A(n) \sim (2d-1)^{n}$ while $B(n) \sim \left(\frac{2\sqrt{2d-1}}{2d} \cdot 2d\right)^{n}$. As $d$ increases, the growth rate of $A(n)$ becomes closer to $\#S$ while the growth rate of $B(n)$ stays uniformly strictly smaller than $\#S$. In summary, although cancellations may disturb the contrast between the two growth rates, such disturbance can be made negligible by taking an `almost mutually independent' set $S$. Such sets are called Schottky sets.

Let us now consider a choice $S= S_{1} \cup S_{0}$, where $S_{1}$ is a (symmetric) alphabet for a free semigroup of $F_{2}$ and $S_{0}$ is an additional impurity that makes $S$ a generating set of $F_{2}$. For simplicity let us assume the form \[
S_{1} = \left\{ c_{1} \cdots c_{M} \cdot a^{100M} \cdot d_{1} \cdots d_{M} : c_{i}, d_{i} \in \{a, b\}\right\}
\]
for $M \ge 10$. Since $S_{1}$ is large enough, we can guarantee the gap between the growth rates of $A(n)$ and $B(n)$ with respect to $S_{1}$. We now want the stability of this gap with respect to the perturbation $S_{0}$. To be explicit, we hope \[
A(n) \gtrsim ((1-\rho) \#S)^{n}, \quad B(n) \lesssim (\rho'  \#S)^{n}
\]
for some $\rho$ proportional to $(\#S_{0})/ (\#S)$ and a constant $\rho' <1$ that works for small enough $(\#S_{0})/(\#S)$. Note that we do not require any condition on the elements of $S_{0}$; we only restrict the cardinality of $S_{0}$. 

The stability for $A(n)$ is straightforward, but the one for $B(n)$ is considerably harder. In contrast to the case of simple random walks, we have no information how individual elements of $S_{0}$ interact with the elements of $S_{1}$. Perhaps a bad element of $S_{0}$ cancels out a concatenation of 10 letters in $S_{1}$, or that of 100 letters. This opens the possibility that the RV $d(o, \w_{n+1} o) - d(o, \w_{n} o)$ conditioned on $\w_{n}$ has negative expectation for some $\w_{n}$. Hence, we cannot pretend as if we are summing up i.i.d. RVs with positive expectation at each single step. 

Nonetheless, we may focus only on the steps chosen from $S_{1}$ and try to construct i.i.d. RVs that reflects the progresses made there. Let us construct a set $P_{n} = \{i(1) < \ldots < i(m)\}$ such that: \begin{enumerate}
\item $g_{i(1)}, \ldots, g_{i(m)}$ are drawn from $S_{1}$, and 
\item $[o, \w_{n} o]$ contains the middle 99\% of $[\w_{i(1)} o, \w_{i(1)} g_{i(1)} o]$, $\ldots$, $[\w_{i(m)} o, \w_{i(m)} g_{i(m)} o]$.
\end{enumerate}
Note that $P_{n}$ is a subset of \[
\Theta = \{1 \le i \le n : g_{i} \in S_{1}\},
\]
whose size is sufficiently large if $(\#S_{0})/(\#S)$ is small enough. Fixing the slots $\Theta$ for elements in $S_{1}$ and all the other choices $\{g_{i} : i \notin \Theta\}$, we are now asked to control $w_{m} = w_{0} s_{1} w_{1} \cdots s_{m} w_{m}$ where $w_{i}$ are fixed words in $F_{2}$ and $s_{i}$ are independently drawn from $S_{1}$. Since elements of $S_{1}$ are deviating from each other early, $[o, so]$ and $[o, w_{0}^{-1}o]$ fellow travel for very few $s \in S_{1}$. Similarly, $[o, s^{-1} o]$ and $[o, w_{1} o]$ are deviating early for a large probability. Due to these sorts of reasons, there is a (uniformly) high chance that the middle 99\% of $[w_{k-1} o, w_{k-1} s_{k}o]$ is visible in $[o, w_{k-1} s_{k} w_{k} o]$. Consequently, the progress made by $s_{k}$ is along $[o, w_{0} s_{1} \cdots s_{k} w_{k} o]$. In such a case, those progresses made by $s_{i}$ for $i < k$ along $[o, w_{0} s_{1} \cdots w_{k-1} o]$ are still intact in $[o, w_{0} s_{1} \cdots s_{k} w_{k} o]$.

Still, we are worried with the situation that an unfortunate choice of $s_{k}$ makes a progress that is not visible in $[o, w_{0} \cdots s_{k} w_{k} o]$, and even worse, the previous Schottky progresses along $[o, w_{0} \cdots s_{k-1} w_{k-1} o]$ are all lost in $[o, w_{0} \cdots s_{k} w_{k} o]$. Let $s_{j(1)}, \ldots, s_{j(m')}$ be the choices from $S_{1}$ before $s_{k}$ that made progresses along $[o, w_{0} \cdots s_{k-1} w_{k-1} o]$. We observe that there are plenty of other choices for $s_{j(1)}$, $\ldots$, $s_{j(m')}$ that make progresses along $[o, w_{0} \cdots s_{k-1} w_{k-1} o]$. This modification does not affect the positions $j(1)$, $\ldots$, $j(m')$, and we call it \emph{pivoting}. We now freeze the choice $w_{j(m')+1} s_{j(m')+1} \cdots s_{k} w_{k}$ and perform the pivoting. The progress $[o, s_{j(m')}]$ made by the $m'$-th choice $s_{j(m')}$ is aligned along $[o, s_{j(m')} w_{j(m')} s_{j(m')+1} \cdots s_{k} w_{k} o]$ for high chance, and moreover, $[o, s_{j(m'-1)}]$ is aligned along $[o, s_{j(m'-1)} w_{j'(m'-1)} \cdots w_{k} o]$ for high chance \emph{regardless of the pivotal choice $s_{j(m')}$}. Continuing this, we observe that the progress made by the $(m'-l)$-th choice $s_{j(m'-l)}$ -- and all progresses before it -- is visible in $[o, w_{0} \cdots w_{k} o]$, outside a set of probability that decays exponentially in $l$. Using this, we can bound $\#P_{n}$ from below by the sum of i.i.d. that heavily favors 1 and has an exponentially decaying tail. In particular, we have \[
\Prob(\#P_{n} \ge n/K) \le K e^{-n/K}
\] for some $K>0$. A fuller description can be found in \cite[Section 2]{gouezel2021exp}. 

We have not discussed the linearly growing threshold of the translation length yet. For example, the word $a^{-1}b^{-1} a^{3} b a^{2} b^{-2} a^{-2} b^{-1} a^{-3} ba$ has large displacement, 18, but short translation length, 2. If we pivot the Schottky choices, e.g., modify the first or the second $b^{-1}$ into $b$, its translation length will increase to 16 or 8, respectively. This illustrates that pivoting can secure large translation lengths in probability, which is Baik-Choi-Kim's strategy in \cite{baik2021linear}.

The above example is an over-simplification of weakly hyperbolic groups and mapping class groups. Gromov hyperbolic spaces and Teichm{\"u}ller space are not trees, but we can still copy the above argument by using the Gromov inequality or the (partial) hyperbolicity in  Teichm{\"u}ller space due to Rafi.

We remark that the notions and statements in Section \ref{section:group}, \ref{section:pivot} and \ref{section:trLength} are mostly established in \cite{gouezel2021exp}, \cite{baik2021linear} and \cite{choi2021clt}. Nonetheless, to make the exposition self-contained, we present all details except the proofs of Fact \ref{lem:concat}, \ref{lem:concatUlt}, \ref{lem:farSegment}, \ref{lem:1segment} and \ref{lem:passerBy}. See \cite{choi2021clt} for their proofs.

\subsection*{Acknowledgments}
The author truly thanks Hyungryul Baik, Dongryul M. Kim, Jing Tao, Samuel Taylor and Wenyuan Yang for their helpful discussion. The author also appreciates anonymous referees' comments that led to a significant improvement on the manuscript.

The author was partially supported by Samsung Science \& Technology Foundation grant No. SSTF-BA1702-01.

%
%

\section{Witnessing and Schottky sets}\label{section:group}

This section is devoted to the facts about Gromov hyperbolic spaces and Teichm{\"u}ller spaces. All facts in this section are proved in \cite{choi2021clt}.

Given a metric space $(X, d)$ and a triple $x, y, z \in X$, we define the \emph{Gromov product} of $y, z$ with respect to $x$ by \[
(y, z)_{x} = \frac{1}{2} [d(x, y) + d(x, z) - d(y, z)].
\]
Throughout the article, we will frequently use the following property: for $x, y, z \in X$ and $g \in \operatorname{Isom}(X)$, we have \[
(gy, gz)_{gx} = (y, z)_{x}.
\]

$X$ is said to be \emph{Gromov hyperbolic} if there exists a constant $\delta>0$ such that every quadruple $x, y, z, w \in X$ satisfies the following inequality called the \emph{Gromov inequality}:
\begin{equation}\label{eqn:Gromov}
(x, y)_{w} \ge \min\{(x, z)_{w}, (y, z)_{w}\} - \delta.
\end{equation}
Here, $X$ need not be geodesic nor intrinsic; all arguments regarding Gromov hyperbolic spaces rely solely on the Gromov inequality.

\emph{From now on, we permanently fix $\delta>0$.}

In $\T(\Sigma)$, we say that a surface $x \in \T(\Sigma)$ is $\epsilon$-thin if there exists a simple closed curve on $x$ whose extremal length is less than $\epsilon$; if not, we say that it is $\epsilon$-thick. For $x, y \in \T(\Sigma)$, $[x, y]$ denotes the Teichm{\"u}ller geodesic from $x$ to $y$; it is said to be $\epsilon$-thick if it is composed of $\epsilon$-thick points.
 
In $\delta$-hyperbolic spaces, we regard every point as $\epsilon$-thick for any $\epsilon>0$. Here $[x, y]$ denotes the pair of points $(x, y)$, which is considered $\epsilon$-thick for any $\epsilon>0$. In either space, $[x, y]$ are called \emph{segments} and their lengths are defined by $d(x, y)$.

\begin{definition}[Witnessing in $\delta$-hyperbolic spaces]\label{dfn:witnessing1}
Let $x$, $y$, $\{x_{i}\}_{i=1}^{n}$, $\{y_{i}\}_{i=1}^{n}$ be points in a $\delta$-hyperbolic space $X$, and $D>0$. We say that $[x, y]$ is \emph{$D$-witnessed by $([x_{1}, y_{1}], \ldots, [x_{n}, y_{n}])$} if: \begin{enumerate}
\item $(x_{i-1}, x_{i+1})_{x_{i}} < D$ for $i = 1, \ldots, n$, where $x = x_{0}$ and $y = x_{n+1}$;
\item $(y_{i-1}, y_{i+1})_{y_{i}} < D$ for $i = 1, \ldots, n$, where $x = y_{0}$ and $y = y_{n+1}$, and
\item $(y_{i-1}, y_{i})_{x_{i}}, (x_{i}, x_{i+1})_{y_{i}} < D$ for $i=1, \ldots, n$.
\end{enumerate}
\end{definition}

Definition \ref{dfn:witnessing1} seems complicated, but it is a version of Definition \ref{dfn:witnessing2} in the absence of the geodesicity of the ambient space. Indeed, in a geodesic Gromov hyperbolic space, these two notions of witnessing are equivalent up to the modification of the parameter $D$.

\begin{definition}[Witnessing in $\T(\Sigma)$]\label{dfn:witnessing2}
Let $x$, $y$, $\{x_{i}\}_{i=1}^{n}$, $\{y_{i}\}_{i=1}^{n}$ be points in $X = \T(\Sigma)$, and $D>0$. We say that $[x, y]$ is \emph{$D$-witnessed by $([x_{1}, y_{1}], \ldots, [x_{n}, y_{n}])$} if the geodesic $[x, y]$ contains subsegments $[x'_{i}, y'_{i}]$ such that \begin{enumerate}
\item $x_{i+1}'$ is not closer to $x$ than $y_{i}'$ for $i=1, \ldots, n-1$, and 
\item $[x_{i}, y_{i}]$ and $[x_{i}', y_{i}']$ $D$-fellow travel.
\end{enumerate}
\end{definition}

\emph{From now on, we permanently fix $X$, a $\delta$-hyperbolic space or $\T(\Sigma)$}.

\begin{definition}
Let $x, y$ and $\{x_{i}, y_{i}, z_{i}\}_{i=1}^{N}$ be points in $X$. We say that $[x, y]$ is \emph{$D$-marked with $([x_{i}, y_{i}])_{i=1}^{N}$, $([z_{i}, x_{i}])_{i=1}^{N}$} if the following hold: \begin{enumerate}
\item $(y_{i}, z_{i})_{x_{i}} < D$ for each $i=1, \ldots, N$;
\item $[x_{i-1}, x_{i}]$ is $D$-witnessed by $([x_{i-1}, y_{i-1}], [z_{i}, x_{i}])$ for each $i=1, \ldots, N+1$, where we set $x_{0} = y_{0} = x$ and $x_{N+1} =z_{N+1}= y$;
\end{enumerate}
In this case, we also say that $[x_{1}, x_{n}]$ is fully $D$-marked with $([x_{i}, y_{i}])_{i=1}^{N-1}$, $([z_{i}, x_{i}])_{i=2}^{N}$.
\end{definition}

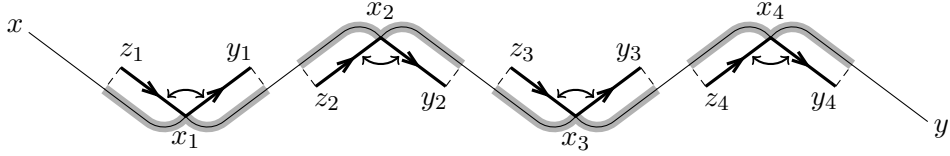
\begin{figure}[H]
\begin{tikzpicture}[scale=0.9]
\def\a{0.8}
\def\b{3}
\def\c{1.2}

\draw[line width = 1.4 mm, black!30, rotate=36.869897645844021]  (0, 0) arc (-180:-90:0.5*\a) -- (\c, -0.5*\a);
\draw[line width = 1.4 mm, black!30, rotate=36.869897645844021]  (\b - \c, -0.5*\a) -- (\b - 0.5*\a, -0.5*\a) arc (90:0:0.5*\a);
\draw[line width = 1.4 mm, black!30, shift={(\a*0.6 + \b*0.8, -\a*0.8 + \b*0.6)}, rotate=-36.869897645844021] (0, 0) arc (180:90:0.5*\a) -- (\c, 0.5*\a);

\draw[line width = 1.4 mm, black!30,rotate=-36.869897645844021] (0, 0) arc (0:-90:0.5*\a) -- (-\c, -0.5*\a);

\begin{scope}[shift={(2*\a*0.6 + 2*\b*0.8, 0)}]
\draw[line width = 1.4 mm, black!30, rotate=36.869897645844021]  (0, 0) arc (-180:-90:0.5*\a) -- (\c, -0.5*\a);
\draw[line width = 1.4 mm, black!30, rotate=36.869897645844021]  (\b - \c, -0.5*\a) -- (\b - 0.5*\a, -0.5*\a) arc (90:0:0.5*\a);
\draw[line width = 1.4 mm, black!30,rotate=-36.869897645844021] (0, 0) arc (0:-90:0.5*\a) -- (-\c, -0.5*\a);

\draw[line width = 1.4 mm, black!30, shift={(\a*0.6 + \b*0.8, -\a*0.8 + \b*0.6)}, rotate=-36.869897645844021] (0, 0) arc (180:90:0.5*\a) -- (\c, 0.5*\a);
\end{scope}

\draw[densely dashed] (-\c*0.8, \c*0.6) -- (-\c*0.8 - 0.5*\a*0.6, \c*0.6 - 0.5*\a*0.8);

\draw[densely dashed] (\c*0.8, \c*0.6) -- (\c*0.8 + 0.5*\a*0.6, \c*0.6 - 0.5*\a*0.8);
\draw[densely dashed] (\a*0.6 + \b*0.8 - \c*0.8, -\a*0.8 + \b*0.6 - \c*0.6) -- (0.5*\a*0.6 + \b*0.8 - \c*0.8, -0.5*\a*0.8 + \b*0.6 - \c*0.6);
\draw[densely dashed] (\a*0.6 + \b*0.8 + \c*0.8, -\a*0.8 + \b*0.6 - \c*0.6) -- (1.5*\a*0.6 + \b*0.8 + \c*0.8, -0.5*\a*0.8 + \b*0.6 - \c*0.6);

\draw[very thick, decoration={markings, mark=at position 0.28 with {\draw (-0.2, 0.07) -- (0, 0) -- (-0.2, -0.07);}, mark=at position 0.78 with {\draw (-0.2, 0.07) -- (0, 0) -- (-0.2, -0.07);}}, postaction={decorate}] (-\c*0.8, \c*0.6) -- (0, 0) -- (\c*0.8, \c*0.6);
\draw[very thick, decoration={markings, mark=at position 0.28 with {\draw (-0.2, 0.07) -- (0, 0) -- (-0.2, -0.07);}, mark=at position 0.78 with {\draw (-0.2, 0.07) -- (0, 0) -- (-0.2, -0.07);}}, postaction={decorate}] (\a*0.6 + \b*0.8 - \c*0.8, -\a*0.8 + \b*0.6 - \c*0.6) -- (\a*0.6 + \b*0.8, -\a*0.8 + \b*0.6) -- (\a*0.6 + \b*0.8 + \c*0.8, -\a*0.8 + \b*0.6 - \c*0.6);
\draw[rotate=36.869897645844021] (0, 0) arc (-180:-90:0.5*\a) -- (\b - 0.5*\a, -0.5*\a) arc (90:0:0.5*\a);
\draw[shift={(\a*0.6 + \b*0.8, -\a*0.8 + \b*0.6)}, rotate=-36.869897645844021] (0, 0) arc (180:90:0.5*\a) -- (\b - 0.5*\a, 0.5*\a) arc (-90:0:0.5*\a);

\draw[rotate=45, <->, thick] (0.32*\c, 0) arc (0:90:0.32*\c);
\draw[shift={ (\a*0.6 + \b*0.8, -\a*0.8 + \b*0.6)}, rotate=-45, <->, thick] (0.32*\c, 0) arc (0:-90:0.32*\c);

\draw[xscale=-1, rotate=36.869897645844021] (0, 0) arc (-180:-90:0.5*\a) -- (\b - 0.5*\a, -0.5*\a);

\draw (-\c*0.8 + \a*0.6*0.35, \c*0.6 + \a*0.8*0.35) node {$z_{1}$}; 
\draw (\c*0.8 - \a*0.6*0.35, \c*0.6+ \a*0.8*0.35) node {$y_{1}$}; 
\draw (\a*0.6 + \b*0.8-\c*0.8 + \a*0.6*0.35 , -\a*0.8 + \b*0.6 - \c*0.6 - \a*0.8*0.35) node {$z_{2}$};
\draw (\a*0.6 + \b*0.8+\c*0.8 - \a*0.6*0.35 , -\a*0.8 + \b*0.6 - \c*0.6 - \a*0.8*0.35) node {$y_{2}$};
\draw (0, -0.35) node {$x_{1}$};
\draw (\a*0.6 + \b*0.8, -\a*0.8 + \b*0.6+0.4) node {$x_{2}$}; 

\draw (-\b*0.8 + \a*0.1 -0.2, \b*0.6 - \a*0.7+0.1) node {$x$};

\begin{scope}[shift={(2*\a*0.6 + 2*\b*0.8, 0)}]

\draw[densely dashed] (-\c*0.8, \c*0.6) -- (-\c*0.8 -0.5*\a*0.6, \c*0.6 - 0.5*\a*0.8);
\draw[densely dashed] (\c*0.8, \c*0.6) -- (\c*0.8 + 0.5*\a*0.6, \c*0.6 - 0.5*\a*0.8);
\draw[densely dashed] (\a*0.6 + \b*0.8 - \c*0.8, -\a*0.8 + \b*0.6 - \c*0.6) -- (0.5*\a*0.6 + \b*0.8 - \c*0.8, -0.5*\a*0.8 + \b*0.6 - \c*0.6);

\draw[densely dashed] (\a*0.6 + \b*0.8 + \c*0.8, -\a*0.8 + \b*0.6 - \c*0.6) -- (1.5*\a*0.6 + \b*0.8 + \c*0.8, -0.5*\a*0.8 + \b*0.6 - \c*0.6);

\draw[very thick, decoration={markings, mark=at position 0.28 with {\draw (-0.2, 0.07) -- (0, 0) -- (-0.2, -0.07);}, mark=at position 0.78 with {\draw (-0.2, 0.07) -- (0, 0) -- (-0.2, -0.07);}}, postaction={decorate}] (-\c*0.8, \c*0.6) -- (0, 0) -- (\c*0.8, \c*0.6);
\draw[very thick, decoration={markings, mark=at position 0.28 with {\draw (-0.2, 0.07) -- (0, 0) -- (-0.2, -0.07);}, mark=at position 0.78 with {\draw (-0.2, 0.07) -- (0, 0) -- (-0.2, -0.07);}}, postaction={decorate}] (\a*0.6 + \b*0.8 - \c*0.8, -\a*0.8 + \b*0.6 - \c*0.6) -- (\a*0.6 + \b*0.8, -\a*0.8 + \b*0.6) -- (\a*0.6 + \b*0.8 + \c*0.8, -\a*0.8 + \b*0.6 - \c*0.6);
\draw[rotate=36.869897645844021] (0, 0) arc (-180:-90:0.5*\a) -- (\b - 0.5*\a, -0.5*\a) arc (90:0:0.5*\a);
\draw[shift={(\a*0.6 + \b*0.8, -\a*0.8 + \b*0.6)}, rotate=-36.869897645844021] (0, 0) arc (180:90:0.5*\a) -- (\b - 0.5*\a, 0.5*\a);

\draw[rotate=45, <->, thick] (0.32*\c, 0) arc (0:90:0.32*\c);
\draw[shift={ (\a*0.6 + \b*0.8, -\a*0.8 + \b*0.6)}, rotate=-45, <->, thick] (0.32*\c, 0) arc (0:-90:0.32*\c);

\draw (-\c*0.8 + \a*0.6*0.35, \c*0.6 + \a*0.8*0.35) node {$z_{3}$}; 
\draw (\c*0.8- \a*0.6*0.35, \c*0.6+ \a*0.8*0.35) node {$y_{3}$}; 
\draw (\a*0.6 + \b*0.8-\c*0.8 + \a*0.6*0.35 , -\a*0.8 + \b*0.6 - \c*0.6 - \a*0.8*0.35) node {$z_{4}$};
\draw (\a*0.6 + \b*0.8+\c*0.8 - \a*0.6*0.35 , -\a*0.8 + \b*0.6 - \c*0.6 - \a*0.8*0.35) node {$y_{4}$};

\draw (0, -0.37) node {$x_{3}$};
\draw (\a*0.6 + \b*0.8, -\a*0.8 + \b*0.6+0.4) node {$x_{4}$};

\draw[shift={(\b*0.8+\a*0.6,  \b*0.6 - \a*0.8 )}, rotate=180] (-\b*0.8 + \a*0.1 -0.2, \b*0.6 - \a*0.7+0.1) node {$y$};

\end{scope}

\end{tikzpicture}
\caption{Schematics for $[x, y]$ being $D$-marked with $([x_{i}, y_{i}])_{i=1}^{4}$, $([z_{i}, x_{i}])_{i=1}^{4}$.}
\label{fig:scheme2}
\end{figure}

The following observation is immediate.

\begin{lem}\label{lem:markedConcat}
Let $\{x_{i}\}_{i=0}^{N}$, $\{y_{i-1}, z_{i}\}_{i=1}^{N}$ be points in $X$. Suppose that for each $i=1, \ldots, N$, $[x_{i-1}, x_{i}]$ is fully $D$-marked with sequences of segments $(\gamma_{i, j})_{j=1}^{n_{i} - 1}$, $(\eta_{i, j})_{j=2}^{n_{i}}$, where $\gamma_{i, 1} = [x_{i-1}, y_{i-1}]$ and $\eta_{i, n_{i}} = [z_{i}, x_{i}]$. Suppose also that $(y_{i}, z_{i})_{x_{i}} \le D$ for $i=1, \ldots, N-1$. Then $[x_{0}, x_{N}]$ is fully $D$-marked with \[\begin{aligned}
\left(\gamma_{1, 1}, \ldots, \gamma_{1, n_{1} - 1},\,\, \gamma_{2, 1}, \ldots, \gamma_{2, n_{2} - 1},\,\, \ldots, \gamma_{N, n_{N} - 1}\right), \\
\left(\eta_{1, 2}, \ldots, \eta_{1, n_{1}},\quad\,\, \eta_{2, 2}, \ldots, \eta_{2, n_{2}},\quad\,\,\ldots, \eta_{N, n_{N}}\quad\right).
\end{aligned}
\]
\end{lem}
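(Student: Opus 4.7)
The plan is to verify the definition of fully $D$-marked directly on $[x_0, x_N]$ with the concatenated data. No geometric content beyond the hypothesis is required, because witnessing (Definition \ref{dfn:witnessing1} or \ref{dfn:witnessing2}) is a local property of the listed endpoints and auxiliary points, and does not depend on the ambient segment in which one views it.

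First I would fix notation for the marking points of the concatenation. Writing the internal marking points of $[x_{i-1}, x_i]$ as $p_{i,1} = x_{i-1}, p_{i,2}, \ldots, p_{i, n_i} = x_i$, each ``joint'' $p_{i, n_i} = p_{i+1, 1} = x_i$ becomes a marking point shared between two adjacent fully marked segments. The concatenated list then has $\sum_{i=1}^{N} (n_i - 1) + 1$ marking points, and the proposed $\gamma$- and $\eta$-sequences each have $\sum_{i=1}^{N}(n_i - 1)$ entries, as required for a fully marked segment with this many marking points.

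Next I would dispatch the Gromov product inequalities at interior marking points according to two cases. At a point strictly interior to some $[x_{i-1}, x_i]$, the required inequality is inherited verbatim from the assumption that $[x_{i-1}, x_i]$ is fully $D$-marked with $(\gamma_{i,j})_{j=1}^{n_i-1}$, $(\eta_{i,j})_{j=2}^{n_i}$. At a joint point $x_i$ with $1 \le i \le N-1$, the Schottky segment immediately succeeding it in the concatenation is $\gamma_{i+1, 1} = [x_i, y_i]$ and the one immediately preceding it is $\eta_{i, n_i} = [z_i, x_i]$, so the requirement reduces to $(y_i, z_i)_{x_i} \le D$, which is exactly the added hypothesis. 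Every consecutive pair of marking points in the concatenation likewise already sits as a consecutive pair inside one of the segments $[x_{i-1}, x_i]$, because each joint $x_i$ appears as a marking point in both $[x_{i-1}, x_i]$ and $[x_i, x_{i+1}]$; therefore the required witnessing data is supplied by hypothesis. I do not anticipate any genuine obstacle here: the argument amounts to careful bookkeeping of indices, the key point being that witnessing is invariant under passage to a larger ambient segment.
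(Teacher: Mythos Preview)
Your proposal is correct and matches the paper's treatment: the paper simply calls this observation ``immediate'' and gives no proof, precisely because it is the bookkeeping argument you describe. One small remark: the witnessing conditions in the definition of (fully) $D$-marked are imposed on the individual subsegments $[p_{j-1}, p_j]$, not on the ambient segment, so your closing comment about ``invariance under passage to a larger ambient segment'' is unnecessary---but it does no harm.
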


Our aim is to prove that if $[x, y]$ is $D$-marked with sufficiently thick and long segments, then $[x, y]$ is witnessed by those segments. In order to prove this, we need the following consequences of Rafi's fellow-travelling and thin triangle theorems \cite{rafi2014hyperbolicity}.

\begin{fact}[{\cite[Lemma 3.9]{choi2021clt}}]\label{lem:concat}
For each $D, \epsilon > 0$, there exist $E, L > D$ that satisfy the following property. Let $x_{1}$ be an $\epsilon$-thick point and $\{x_{i}\}_{i=2}^{3}, \{y_{i}\}_{i=1}^{5}$ be points in $X$ such that: \begin{enumerate}
\item $[y_{1}, y_{2}]$, $[y_{3}, y_{4}]$, $[y_{4}, y_{5}]$ are $\epsilon$-thick and longer than $L$,
\item $(y_{3}, y_{5})_{y_{4}} \le D$,
\item $[x_{1}, x_{2}]$ is $D$-witnessed by $([y_{1}, y_{2}], [y_{3}, y_{4}])$, and 
\item $[x_{2}, x_{3}]$ is $E$-witnessed by $[y_{4}, y_{5}]$,
\end{enumerate}
then $[x_{1}, x_{3}]$ is $E$-witnessed by $[y_{1}, y_{2}]$.
\end{fact}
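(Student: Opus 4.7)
The plan is to verify, for $[x_1, x_3]$ to be $E$-witnessed by $[y_1, y_2]$ in the sense of Definition \ref{dfn:witnessing1}, the four inequalities
\[
(x_1, x_3)_{y_1},\ (x_1, x_3)_{y_2},\ (x_1, y_2)_{y_1},\ (y_1, x_3)_{y_2} < E.
\]
The third, $(x_1, y_2)_{y_1} < D$, is already one of the inequalities defining ``$[x_1, x_2]$ is $D$-witnessed by $([y_1, y_2], [y_3, y_4])$,'' so taking $E \ge D$ disposes of it immediately.

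The main observation I would use is that the hypotheses, together with the assumption $(y_3, y_5)_{y_4} \le D$, assemble into a single chain of points
\[
p_0 = x_1,\ p_1 = y_1,\ p_2 = y_2,\ p_3 = y_3,\ p_4 = y_4,\ p_5 = y_5,\ p_6 = x_3
\]
at which every consecutive Gromov product $(p_{i-1}, p_{i+1})_{p_i}$ is bounded by $\max\{D, E\}$, and in which the three segments $[y_1, y_2], [y_3, y_4], [y_4, y_5]$ are $\epsilon$-thick and have length at least $L$. In the Gromov hyperbolic case, I would then invoke the standard local-to-global principle: provided $L$ is chosen large enough in terms of $D, E, \delta$, such a chain reparametrizes a quasi-geodesic, and a direct induction using the Gromov inequality \eqref{eqn:Gromov} yields $(p_0, p_6)_{p_i} \le \max\{D, E\} + O(\delta)$ at each interior node $p_i$. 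Choosing $E$ to be a suitably large multiple of $D + \delta$ absorbs the slack and furnishes $(x_1, x_3)_{y_1}, (x_1, x_3)_{y_2} < E$; restricting the argument to the subchain $y_1, y_2, y_3, y_4, y_5, x_3$ yields $(y_1, x_3)_{y_2} < E$ as well.

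For $X = \T(\Sigma)$, the same pattern of argument transfers via Rafi's hyperbolicity of the $\epsilon$-thick part of Teichm\"uller space: sufficiently long $\epsilon$-thick Teichm\"uller segments, glued with small Gromov products at their endpoints, fellow-travel the geodesic between their extreme endpoints, with constants depending only on $\epsilon$. The length bound $L$ on $[y_1, y_2], [y_3, y_4], [y_4, y_5]$ is precisely the input required to invoke this fellow-traveling result, and the conclusion is then read off by the same bookkeeping as in the hyperbolic case.

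The principal obstacle I anticipate is the interlocked choice of the constants $L$ and $E$: the chain-concatenation argument pays an additive slack (proportional to $\delta$ in the hyperbolic case, $\epsilon$-dependent in the Teichm\"uller case) each time it invokes the Gromov inequality or Rafi's fellow-traveling, so $E$ must be fixed large enough relative to $D$ first, after which $L$ can be chosen large enough that the local-to-global mechanism genuinely applies. Executing these choices in the correct order, and ensuring the Teichm\"uller estimates really do propagate through the chain without losing thickness, is the main bookkeeping issue.
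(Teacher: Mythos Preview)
The paper does not prove this statement: it is recorded as a Fact with a citation to \cite[Lemma 3.9]{choi2021clt}, and the introduction explicitly says ``we present all details except the proofs of Fact \ref{lem:concat}, \ref{lem:concatUlt}, \ref{lem:farSegment}, \ref{lem:1segment} and \ref{lem:passerBy}.'' So there is no in-paper argument to compare your proposal against.

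On the proposal itself: in the $\delta$-hyperbolic case your chain idea is sound, but note a subtlety you glossed over. Your chain $x_{1}, y_{1}, y_{2}, y_{3}, y_{4}, y_{5}, x_{3}$ does have all consecutive Gromov products bounded by $\max\{D,E\}$, but only three of its links, $[y_{1},y_{2}]$, $[y_{3},y_{4}]$, $[y_{4},y_{5}]$, are assumed long; nothing controls $d(y_{2},y_{3})$, $d(x_{1},y_{1})$, or $d(y_{5},x_{3})$. The standard local-to-global lemma you invoke requires \emph{all} links long, so you cannot quote it directly. The fix is that a single Gromov-inequality step passes the estimate across a short link with only an additive $\delta$, and the three long links are what prevent accumulated loss; carrying this out carefully still yields $(x_{1},x_{3})_{y_{1}}, (x_{1},x_{3})_{y_{2}} \le D + O(\delta)$, whence $E := D + C\delta$ works. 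You should make this explicit rather than appeal to a black-box quasi-geodesic statement.

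For $X = \T(\Sigma)$ your sketch is too optimistic. There is no global Gromov inequality, so ``the same bookkeeping'' does not transfer; what is actually needed (and what \cite{choi2021clt} does) is to invoke Rafi's thin-triangle and fellow-travelling theorems for $\epsilon$-thick Teichm\"uller geodesics and argue via nearest-point projections onto the long thick segments. This is genuinely different from repeating the hyperbolic chain estimate, and it is where the hypothesis that $x_{1}$ is $\epsilon$-thick (which your sketch never uses) enters.
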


\begin{fact}[{\cite[Lemma 3.10]{choi2021clt}}]\label{lem:concatUlt}
For each $E$, $\epsilon > 0$, there exists $F, L> E$ that satisfies the following property. Let $\{x_{i}\}_{i=1}^{3}, \{y_{i}\}_{i=1}^{3}$ be points in $X$ such that: \begin{enumerate}
\item $[y_{1}, y_{2}]$, $[y_{2}, y_{3}]$ are $\epsilon$-thick and longer than $L$,
\item $(y_{1}, y_{3})_{y_{2}} \le E$,
\item $[x_{1}, x_{2}]$ is $E$-witnessed by $[y_{1}, y_{2}]$, and
\item $[x_{2}, x_{3}]$ is $E$-witnessed by $[y_{2}, y_{3}]$,
\end{enumerate}
then $[x_{1}, x_{3}]$ is $F$-witnessed by $[y_{1}, y_{2}]$, and it is also $F$-witnessed by $[y_{2}, y_{3}]$. In particular, $|(x_{1}, x_{3})_{x_{2}} - d(x_{2}, y_{2})| < F$.
\end{fact}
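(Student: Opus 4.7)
The plan is to verify each Gromov product constituting $F$-witnessing of $[x_1, x_3]$ by $[y_1, y_2]$ and by $[y_2, y_3]$, using the four-point $\delta$-hyperbolicity condition (and, in $\T(\Sigma)$, Rafi's thin-triangle theorem) together with the given witnessing data. The geometric picture is clean: the $E$-witnessing of $[x_1, x_2]$ by $[y_1, y_2]$ encodes the ordering $x_1 \to y_1 \to y_2 \to x_2$ up to $E$, the $E$-witnessing of $[x_2, x_3]$ by $[y_2, y_3]$ encodes $x_2 \to y_2 \to y_3 \to x_3$, and $(y_1, y_3)_{y_2} \le E$ gives $y_1 \to y_2 \to y_3$. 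Concatenating yields $x_1 \to y_1 \to y_2 \to y_3 \to x_3$ with $x_2$ branching off near $y_2$, which is exactly the geometric content of the desired $F$-witnessings.

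Concretely, each target product such as $(y_1, x_3)_{y_2}$, $(x_1, x_3)_{y_1}$, or $(x_1, y_3)_{y_2}$ is bounded by applying the four-point condition to a suitable quadruple and unpacking using the already known bounds. For example, for the quadruple $\{y_1, x_3, y_2, y_3\}$, the hypotheses $(y_1, y_3)_{y_2} \le E$, $(y_2, x_3)_{y_3} \le E$, and $d(y_2, y_3) \ge L \gg E + \delta$ force the sum $d(y_1, y_3) + d(y_2, x_3)$ to exceed $d(y_1, y_2) + d(x_3, y_3)$ by more than $2\delta$; the four-point condition then gives
\[
d(y_1, y_3) + d(y_2, x_3) \le d(y_1, x_3) + d(y_2, y_3) + 2\delta,
\]
from which a short computation yields $(y_1, x_3)_{y_2} \le E + \delta$. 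The remaining products are controlled similarly by chaining bounds across quadruples. The same strategy goes through in $\T(\Sigma)$ with Rafi's fellow-traveling theorem substituting for the four-point inequality, at the cost of an $\epsilon$-dependence in $F$. An alternative route is to invoke Fact \ref{lem:concat} after splitting $[y_1, y_2]$ at a point $y^{\ast}$ with $d(y^{\ast}, y_2) = L_0$: the identity $(y_1, y^{\ast})_{y_2} = L_0$ combined with the Gromov inequality gives $(y^{\ast}, y_3)_{y_2} \le E + \delta$, and Fact \ref{lem:concat} applied with correspondence $(y_1, y_2, y_3, y_4, y_5) \leftrightarrow (y_1, y^{\ast}, y^{\ast}, y_2, y_3)$ yields witnessing by $[y_1, y^{\ast}]$, which an extension argument promotes to witnessing by $[y_1, y_2]$; the mirror-image argument gives witnessing by $[y_2, y_3]$.

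The final estimate $|(x_1, x_3)_{x_2} - d(x_2, y_2)| < F$ follows by plugging the approximate identities $d(x_1, x_3) \approx d(x_1, y_2) + d(y_2, x_3)$, $d(x_1, x_2) \approx d(x_1, y_2) + d(y_2, x_2)$, and $d(x_2, x_3) \approx d(x_2, y_2) + d(y_2, x_3)$ (each valid up to $O(F)$ from the witnessings just established together with the two given ones) into $(x_1, x_3)_{x_2} = \tfrac{1}{2}[d(x_2, x_1) + d(x_2, x_3) - d(x_1, x_3)]$ and cancelling. The main obstacle I expect is the Teichmüller case, where Rafi's machinery has to substitute for the clean four-point inequality and the $\epsilon$-thickness hypothesis must be carefully tracked across the subsegment decompositions; this is the source of the $\epsilon$-dependence in the conclusion. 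In the Gromov hyperbolic case the entire proof reduces to uniform bookkeeping of four-point inequalities.
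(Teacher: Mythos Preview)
The paper does not prove this statement. It is stated as Fact~\ref{lem:concatUlt} and imported from \cite[Lemma~3.10]{choi2021clt}; the end of Subsection~1.2 explicitly says that the proofs of Facts \ref{lem:concat}, \ref{lem:concatUlt}, \ref{lem:farSegment}, \ref{lem:1segment} and \ref{lem:passerBy} are omitted and refers the reader to \cite{choi2021clt}. So there is no proof in this paper to compare your attempt against.

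As for the content of your sketch: the Gromov hyperbolic case is sound. Iterating the four-point inequality over the quadruples you indicate does yield all the Gromov product bounds in Definition~\ref{dfn:witnessing1} for $F$-witnessing by $[y_1, y_2]$ and by $[y_2, y_3]$, and the estimate on $(x_1, x_3)_{x_2}$ follows as you say. Your ``alternative route'' via Fact~\ref{lem:concat} is shakier as written. In the non-geodesic $\delta$-hyperbolic setting, $[y_1, y_2]$ is by convention just the pair of points, so there is no intrinsic $y^{\ast}$ at distance $L_0$ from $y_2$ to split at. Even where a splitting point exists, Fact~\ref{lem:concat} requires the outer point $x_1$ to be $\epsilon$-thick, which is not among the hypotheses here; and you would still need to check that $[x_1, x_2]$ is $D$-witnessed by the pair $([y_1, y^{\ast}], [y^{\ast}, y_2])$ before invoking it. Your acknowledgment that the Teichm\"uller case needs Rafi's fellow-traveling theorem is correct, and that is where the real work in the cited lemma lies; what you have written for that case is a gesture rather than an argument.
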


\begin{figure}[h]
\begin{tikzpicture}[scale=0.85]
\begin{scope}[shift={(-0.3, -3)}]

\draw[line width = 1.4 mm, black!30]  (0, 0.3) -- (1.5, 0.3);
\draw[line width = 1.4 mm, black!30]  (2, 0.3) -- (3.5 - 0.096856471680698 - 3*0.15, 0.3) arc (90:90-30:0.15*4.162277660168379);
\draw[line width = 1.4 mm, black!30, shift={(5, 1.3)}, rotate=36.869897645844021]  (0, 0) -- (-1.5+ 0.096856471680698+3*0.15, 0) arc (90:90+30:0.15*4.162277660168379);

\draw[thick, <->, shift={(3.5, 0.8)}, rotate=-7] (-0.35, 0) arc (180:52:0.35);
\draw[very thick, decoration={markings, mark=at position 0.57 with {\draw (-0.2, 0.07) -- (0, 0) -- (-0.2, -0.07);}}, postaction={decorate}] (0, 0) -- (1.5, 0);
\draw[very thick, decoration={markings, mark=at position 0.28 with {\draw (-0.2, 0.07) -- (0, 0) -- (-0.2, -0.07);}, mark=at position 0.78 with {\draw (-0.2, 0.07) -- (0, 0) -- (-0.2, -0.07);}}, postaction={decorate}] (2, 0.8) -- (3.5,0.8) -- (4.7, 1.7);
\draw (-0.7, 0.5) arc (-180:-90:0.2) -- (3.5 - 0.096856471680698 - 3*0.15, 0.3) arc (90:90-71.565051177077989:0.15*4.162277660168379)-- (3.5 + 0.6 - 0.25*0.948683298050514, 0.8 - 0.6*3 - 0.25*0.316227766016838) arc (18.434948822922011-180:18.434948822922011:0.25) -- (3.5 - 0.096856471680698+ 0.948683298050514*0.15 + 2*0.237170824512628, 0.3 - 0.15*2.846049894151541 + 2*0.079056941504209) arc (270 - 71.565051177077989:270 - 2*71.565051177077989:0.15*4.162277660168379) -- (5.7, 1.825) arc (36.869897645844021-90:36.869897645844021:0.2);

\draw[dotted] (-0.7, 0.5) -- (3.6 - 0.4, 0.5) arc (-90:-90+36.869897645844021:0.4*3) -- (5.74, 2.085);
\draw[densely dashed] (0, 0) -- (0, 0.3);
\draw[densely dashed] (1.5, 0) -- (1.5, 0.3);
\draw[densely dashed] (2, 0.8) -- (2, 0.3);
\draw[densely dashed] (3.5, 0.8) -- (3.25, 0.23);
\draw[densely dashed] (3.5, 0.8) -- (4.05, 0.52);
\draw[densely dashed] (4.7, 1.7) -- (5, 1.3);

\fill (-0.7, 0.5) circle (0.065);
\fill  (3.5 + 0.6 +0.25*0.316227766016838, 0.8 - 0.6*3 - 0.25*0.948683298050514) circle (0.065);
\fill  (5.74, 2.085) circle (0.065);

\draw (6.15, 0.4) node {\large $\Rightarrow$};

\draw (-1, 0.6) node {$x_{1}$};
\draw (4.37, -1.51) node {$x_{2}$};
\draw (6.11, 2.23) node {$x_{3}$};

\draw (0.1, -0.3) node {$y_{1}$};
\draw (1.44, -0.3) node {$y_{2}$};
\draw (2.08, 1.08) node {$y_{3}$};
\draw (3.65, 0.5) node {$y_{4}$};
\draw (4.47, 1.9) node {$y_{5}$};

\end{scope}

\begin{scope}[shift={(7.3, -3)}]

\draw[line width = 1.4 mm, black!30]  (0, 0.5) -- (1.5, 0.5);

\draw[very thick, decoration={markings, mark=at position 0.57 with {\draw (-0.2, 0.07) -- (0, 0) -- (-0.2, -0.07);}}, postaction={decorate}] (0, 0) -- (1.5, 0);
\draw[very thick, decoration={markings, mark=at position 0.28 with {\draw (-0.2, 0.07) -- (0, 0) -- (-0.2, -0.07);}, mark=at position 0.78 with {\draw (-0.2, 0.07) -- (0, 0) -- (-0.2, -0.07);}}, postaction={decorate}] (2, 0.8) -- (3.5,0.8) -- (4.7, 1.7);
\draw[dotted] (-0.7, 0.5) arc (-180:-90:0.2) -- (3.5 - 0.096856471680698 - 3*0.15, 0.3) arc (90:90-71.565051177077989:0.15*4.162277660168379)-- (3.5 + 0.6 - 0.25*0.948683298050514, 0.8 - 0.6*3 - 0.25*0.316227766016838) arc (18.434948822922011-180:18.434948822922011:0.25) -- (3.5 - 0.096856471680698+ 0.948683298050514*0.15 + 2*0.237170824512628, 0.3 - 0.15*2.846049894151541 + 2*0.079056941504209) arc (270 - 71.565051177077989:270 - 2*71.565051177077989:0.15*4.162277660168379) -- (5.7, 1.825) arc (36.869897645844021-90:36.869897645844021:0.2);
\draw (-0.7, 0.5) -- (3.6 - 0.3, 0.5) arc (-90:-90+36.869897645844021:0.3*3) -- (5.74, 2.085);
\draw[densely dashed] (0, 0) -- (0, 0.5);
\draw[densely dashed] (1.5, 0) -- (1.5, 0.5);

\fill (-0.7, 0.5) circle (0.065);
\fill  (3.5 + 0.6 +0.25*0.316227766016838, 0.8 - 0.6*3 - 0.25*0.948683298050514) circle (0.065);
\fill  (5.74, 2.085) circle (0.065);

\end{scope}

\begin{scope}[shift={(-0.5, -7.4)}]

\draw[line width = 1.4 mm, black!30]  (2, 0.3) -- (3.5 - 0.096856471680698 - 3*0.15, 0.3) arc (90:90-30:0.15*4.162277660168379);
\draw[line width = 1.4 mm, black!30, shift={(5, 1.3)}, rotate=36.869897645844021]  (0, 0) -- (-1.5+ 0.096856471680698+3*0.15, 0) arc (90:90+30:0.15*4.162277660168379);

\draw[thick, <->, shift={(3.5, 0.8)}, rotate=-7] (-0.35, 0) arc (180:52:0.35);
\draw[very thick, decoration={markings, mark=at position 0.28 with {\draw (-0.2, 0.07) -- (0, 0) -- (-0.2, -0.07);}, mark=at position 0.78 with {\draw (-0.2, 0.07) -- (0, 0) -- (-0.2, -0.07);}}, postaction={decorate}] (2, 0.8) -- (3.5,0.8) -- (4.7, 1.7);
\draw (0.8, 0.5) arc (-180:-90:0.2) -- (3.5 - 0.096856471680698 - 3*0.15, 0.3) arc (90:90-71.565051177077989:0.15*4.162277660168379)-- (3.5 + 0.6 - 0.25*0.948683298050514, 0.8 - 0.6*3 - 0.25*0.316227766016838) arc (18.434948822922011-180:18.434948822922011:0.25) -- (3.5 - 0.096856471680698+ 0.948683298050514*0.15 + 2*0.237170824512628, 0.3 - 0.15*2.846049894151541 + 2*0.079056941504209) arc (270 - 71.565051177077989:270 - 2*71.565051177077989:0.15*4.162277660168379) -- (5.7, 1.825) arc (36.869897645844021-90:36.869897645844021:0.2);

\draw[dotted] (0.8, 0.5) -- (3.6 - 0.4, 0.5) arc (-90:-90+36.869897645844021:0.4*3) -- (5.74, 2.085);

\draw[densely dashed] (2, 0.8) -- (2, 0.3);
\draw[densely dashed] (3.5, 0.8) -- (3.25, 0.23);
\draw[densely dashed] (3.5, 0.8) -- (4.05, 0.52);
\draw[densely dashed] (4.7, 1.7) -- (5, 1.3);

\fill (0.8, 0.5) circle (0.065);
\fill  (3.5 + 0.6 +0.25*0.316227766016838, 0.8 - 0.6*3 - 0.25*0.948683298050514) circle (0.065);
\fill  (5.74, 2.085) circle (0.065);

\draw (6.2, 0.4) node {\large $\Rightarrow$};

\draw (0.45, 0.6) node {$x_{1}$};
\draw (4.28, -1.52) node {$x_{2}$};
\draw (6.2, 2.2) node {$x_{3}$};

\draw (2, 1.07) node {$y_{1}$};
\draw (3.66, 0.48) node {$y_{2}$};
\draw (4.8, 1.9) node {$y_{3}$};

\end{scope}

\begin{scope}[shift={(6.1, -7.4)}]

\draw[line width = 1.4 mm, black!30]  (2, 0.5) -- (3.33, 0.5);
\draw[line width = 1.4 mm, black!30, shift={(4.88, 1.46)}, rotate=36.869897645844021]  (0, 0) -- (-1.33, 0);

\draw[very thick, decoration={markings, mark=at position 0.28 with {\draw (-0.2, 0.07) -- (0, 0) -- (-0.2, -0.07);}, mark=at position 0.78 with {\draw (-0.2, 0.07) -- (0, 0) -- (-0.2, -0.07);}}, postaction={decorate}] (2, 0.8) -- (3.5,0.8) -- (4.7, 1.7);
\draw[dotted] (0.8, 0.5) arc (-180:-90:0.2) -- (3.5 - 0.096856471680698 - 3*0.15, 0.3) arc (90:90-71.565051177077989:0.15*4.162277660168379)-- (3.5 + 0.6 - 0.25*0.948683298050514, 0.8 - 0.6*3 - 0.25*0.316227766016838) arc (18.434948822922011-180:18.434948822922011:0.25) -- (3.5 - 0.096856471680698+ 0.948683298050514*0.15 + 2*0.237170824512628, 0.3 - 0.15*2.846049894151541 + 2*0.079056941504209) arc (270 - 71.565051177077989:270 - 2*71.565051177077989:0.15*4.162277660168379) -- (5.7, 1.825) arc (36.869897645844021-90:36.869897645844021:0.2);
\draw (0.8, 0.5) -- (3.6 - 0.3, 0.5) arc (-90:-90+36.869897645844021:0.3*3) -- (5.74, 2.085);
\draw[densely dashed] (2, 0.8) -- (2, 0.5);
\draw[densely dashed] (3.5, 0.8) -- (3.33, 0.5);
\draw[densely dashed] (3.5, 0.8) -- (3.82, 0.65);
\draw[densely dashed] (4.7, 1.7) -- (4.88, 1.46);

\fill (0.8, 0.5) circle (0.065);
\fill  (3.5 + 0.6 +0.25*0.316227766016838, 0.8 - 0.6*3 - 0.25*0.948683298050514) circle (0.065);
\fill  (5.74, 2.085) circle (0.065);
\end{scope}

\end{tikzpicture}
\caption{Schematics for Fact \ref{lem:concat} and \ref{lem:concatUlt}.}
\label{fig:scheme1}
\end{figure}
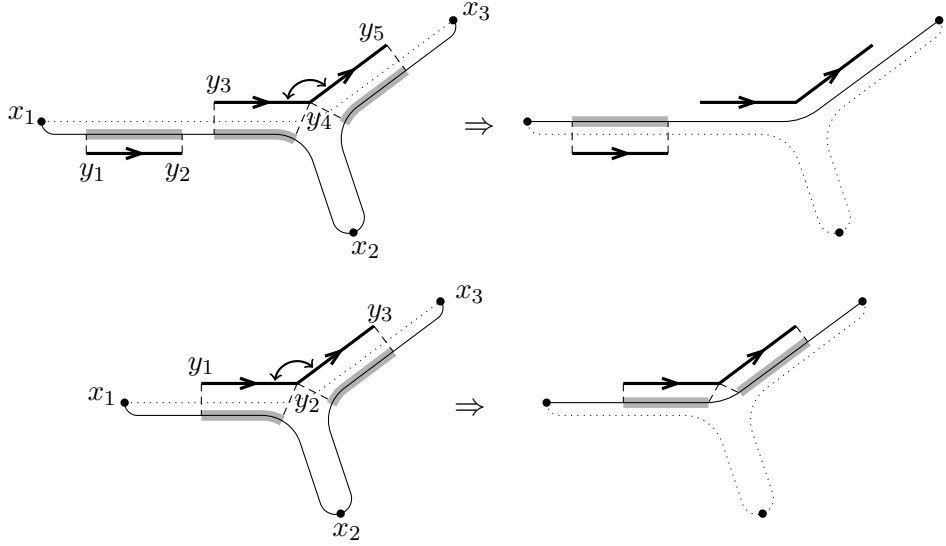

Combining Fact \ref{lem:concat} and \ref{lem:concatUlt} yields the following.

\begin{cor}\label{cor:induction}
Let $D, \epsilon > 0$ and \begin{itemize}
\item $E = E(\epsilon, D)$, $L_{1} = L(\epsilon, D)$ as in Fact \ref{lem:concat}, and 
\item $F = F(\epsilon, E)$, $L_{2} = L(\epsilon, E)$ as in Fact \ref{lem:concatUlt}.
\end{itemize}
Suppose that $x, y, \{x_{i}, y_{i}, z_{i}\}_{i=1}^{N}$ are points in $X$ such that: \begin{enumerate}
\item $[x, y]$ is $D$-marked with $([x_{i}, y_{i}])_{i=1}^{N}$, $([z_{i}, x_{i}])_{i=1}^{N}$, and
\item $[x_{i}, y_{i}]$, $[z_{i}, x_{i}]$ are $\epsilon$-thick and longer than $\max(L_{1}, L_{2})$ for $i=1, \ldots, N$.
\end{enumerate}
Then we have that: \begin{enumerate}
\item $[x, y]$ is $F$-witnessed by $[x_{i}, y_{i}]$ for each $i=1, \ldots, N$, 
\item $[x, y]$ is $F$-witnessed by $[z_{i}, y_{i}]$ for each $i=1, \ldots, N$,  and
\item $(x, y)_{x_{i}}$, $(x, y)_{y_{i}}$, $(x, y)_{z_{i}}$ are smaller than $F$ for each $i=1, \ldots, N$.
\end{enumerate}
\end{cor}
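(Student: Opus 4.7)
The plan is to reduce the three conclusions to single-segment witnessings via two symmetric inductions (one forward from $x_0$, one backward from $x_{N+1}$), and then glue them at each cutting point $x_i$ with a single application of Fact \ref{lem:concatUlt}.

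For the forward induction, I would prove by induction on $k \in \{1, \dots, N\}$ that $[x_0, x_k]$ is $E$-witnessed by the single segment $[z_k, x_k]$. The base case $k = 1$ is essentially free: the convention $x_0 = y_0 = x$ degenerates the first witnessing segment in the given two-segment $D$-marking of $[x_0, x_1]$, and unpacking Definition \ref{dfn:witnessing1} collapses the remaining constraints to precisely the single-segment $D$-witnessing (hence $E$-witnessing) by $[z_1, x_1]$. For the inductive step $k \to k+1$, I would apply Fact \ref{lem:concat} to the reversed triple, with labeling $(x_1, x_2, x_3) := (x_{k+1}, x_k, x_0)$ and $(y_1, \dots, y_5) := (x_{k+1}, z_{k+1}, y_k, x_k, z_k)$: the two-segment $D$-witnessing of $[x_{k+1}, x_k]$ by $([x_{k+1}, z_{k+1}], [y_k, x_k])$ comes from reversing the given marking of $[x_k, x_{k+1}]$; the single-segment $E$-witnessing of $[x_k, x_0]$ by $[x_k, z_k]$ comes from reversing the inductive hypothesis; the turn condition $(y_k, z_k)_{x_k} \le D$ is part of the marking; and the thickness of the ``first point'' $x_{k+1}$ holds because $x_{k+1}$ is an endpoint of the $\epsilon$-thick segment $[z_{k+1}, x_{k+1}]$ (valid whenever $k+1 \le N$). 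The output of Fact \ref{lem:concat}, reversed one last time, is the desired $E$-witnessing of $[x_0, x_{k+1}]$ by $[z_{k+1}, x_{k+1}]$, with the constant $E$ preserved throughout.

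A completely symmetric backward induction on $k$ from $N$ down to $1$ (with base case $k = N$ using the degeneracy $x_{N+1} = z_{N+1} = y$) shows that $[x_k, x_{N+1}]$ is $E$-witnessed by $[x_k, y_k]$, this time using Fact \ref{lem:concat} in its direct orientation. With both families in hand, for each $i \in \{1, \dots, N\}$ I would apply Fact \ref{lem:concatUlt} with $(x_1, x_2, x_3) := (x_0, x_i, x_{N+1})$ and $(y_1, y_2, y_3) := (z_i, x_i, y_i)$. The $\epsilon$-thickness and length of $[z_i, x_i]$ and $[x_i, y_i]$ come from the corollary's assumption; the turn condition $(z_i, y_i)_{x_i} < D \le E$ comes from the marking; the two $E$-witnessings come from the previous two steps. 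Fact \ref{lem:concatUlt} then delivers $F$-witnessing of $[x, y]$ by both $[z_i, x_i]$ and $[x_i, y_i]$, proving conclusions (1) and (2). Conclusion (3) is immediate from single-segment $F$-witnessing: the defining Gromov-product inequalities directly give $(x, y)_{x_i}, (x, y)_{y_i}, (x, y)_{z_i} < F$ in the Gromov-hyperbolic setting, and in Teichm{\"u}ller space the same bounds follow from the fellow-travel characterization via the defining formula for Gromov products.

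The main technical subtlety I anticipate is the careful bookkeeping of the reversal-invariance of the witnessing definitions in the forward induction, and tracking that the thick-endpoint hypothesis on the ``first point'' of Fact \ref{lem:concat} lands exactly at $x_{k+1}$, which is always one of the $\epsilon$-thick points in the marking. Everything else is a matter of confirming that the constants $E$ and $F$ produced by Facts \ref{lem:concat} and \ref{lem:concatUlt} never inflate beyond one final application.
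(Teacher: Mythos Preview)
Your proposal is correct and follows essentially the same approach as the paper: two symmetric single-segment inductions using Fact~\ref{lem:concat} (the paper phrases yours as ``similarly, $[x, x_i]$ is $E$-witnessed by $[z_i, x_i]$''), followed by one application of Fact~\ref{lem:concatUlt} at each $x_i$. Your treatment is in fact more careful than the paper's on two points---the explicit verification that witnessing is reversal-invariant, and the check that the $\epsilon$-thick ``first point'' hypothesis of Fact~\ref{lem:concat} lands on $x_{k+1}$---and you correctly read off conclusion~(3) from the single-segment witnessing inequalities; note only that Fact~\ref{lem:concatUlt} literally yields $F$-witnessing by $[z_i, x_i]$ and $[x_i, y_i]$, so conclusion~(2) as stated (with $[z_i, y_i]$) appears to be a typo in the paper rather than a gap in your argument.
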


\begin{proof}
It is assumed that $[x_{N}, y]$ is $E$-witnessed by $[x_{N}, y_{N}]$. Moreover, by assumption, $[x_{k-1}, x_{k}]$ is $D$-witnessed by $([x_{k-1}, y_{k-1}], [z_{k}, x_{k}])$ where $[x_{k-1}, y_{k-1}]$, $[z_{k}, x_{k}]$ are $\epsilon$-thick and longer than $L_{1}$. Note also that $(y_{k}, z_{k})_{x_{k}} \le D$. Hence, if $[x_{k}, y]$ is $E$-witnessed by $[x_{k}, y_{k}]$, then $[x_{k-1}, y]$ is $E$-witnessed by $[x_{k-1}, y_{k-1}]$ by Fact \ref{lem:concat}. Thus, inductively, we deduce that $[x_{i}, y]$ is $E$-witnessed by $[x_{i}, y_{i}]$ for each $i$, Similarly, $[x, x_{i}]$ is $E$-witnessed by $[z_{i}, x_{i}]$. Now Fact \ref{lem:concatUlt} asserts that $[x, y]$ is $F$-witnessed by $[x_{i}, y_{i}]$ and $[z_{i}, y_{i}]$, which also implies the second item.
\end{proof}

We finally need two facts that guarantee witnessing by a pair of segments.

\begin{fact}[{\cite[Lemma 3.6]{choi2021clt}}]\label{lem:farSegment}
For each $C, \epsilon > 0$, there exists $D >C$ that satisfies the following. If $x, y, z, x' \in X$ satisfy that\begin{enumerate}
\item $[x, y]$, $[z, x']$ are $\epsilon$-thick;
\item $(x, z)_{y}$, $(y, x')_{z} < C$, and
\item $d(y, z) \ge \max\{d(x, y), d(z, x'), 3D\}$,
\end{enumerate}
then $[x, x']$ is $D$-witnessed by $([x, y], [z, x'])$.
\end{fact}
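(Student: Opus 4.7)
The plan is to split the argument according to whether $X$ is a $\delta$-hyperbolic space or the Teichm\"uller space $\T(\Sigma)$.

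\emph{Hyperbolic case.} With $n = 2$ and the identification $(x_1, y_1, x_2, y_2) = (x, y, z, x')$, most of the Gromov-product inequalities demanded by Definition \ref{dfn:witnessing1} are either trivially zero (e.g.\ $(x_0, x_2)_{x_1} = (x, z)_x = 0$) or supplied directly by the hypothesis $(x, z)_y, (y, x')_z < C$. The only nontrivial inequalities are $(x, x')_z < D$ and $(x, x')_y < D$, and by symmetry I focus on the first. Unpacking $(x, z)_y < C$ gives $d(x, z) > d(x, y) + d(y, z) - 2C$; substituting this into $2(y, x)_z = d(z, y) + d(z, x) - d(x, y)$ yields $(y, x)_z > d(y, z) - C$. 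Choosing $D := C + \delta$, the hypothesis $d(y, z) \ge 3D > 2C + \delta$ makes $(y, x)_z$ strictly larger than $(y, x')_z + \delta$. The Gromov inequality applied to the triple $(y, x, x')$ based at $z$, in its equivalent form ``among the three Gromov products based at a single point, the two smallest differ by at most $\delta$'', then forces $(x, x')_z$ to lie within $\delta$ of $(y, x')_z$, giving $(x, x')_z < C + \delta = D$. The bound on $(x, x')_y$ follows by the symmetric argument based at $y$.

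\emph{Teichm\"uller case.} Here Definition \ref{dfn:witnessing2} requires producing subsegments of $[x, x']$ that $D$-fellow travel $[x, y]$ and $[z, x']$ in the correct order. The approach is to invoke Rafi's fellow-traveling and thin triangle theorems, as packaged in Section 3 of \cite{choi2021clt}: the smallness of $(x, z)_y$ together with $\epsilon$-thickness of $[x, y]$ forces $[x, z]$ to fellow-travel $[x, y]$ on a subsegment ending near $y$, and symmetrically, $(y, x')_z < C$ together with $\epsilon$-thickness of $[z, x']$ gives fellow-traveling of $[y, x']$ with $[z, x']$ on a subsegment starting near $z$. Because $d(y, z)$ dominates both $d(x, y)$ and $d(z, x')$ (and exceeds $3D$), the geodesic $[x, x']$ closely tracks the broken path $x \to y \to z \to x'$ through its thick portion, so these two fellow-traveling regions transfer to disjoint subsegments of $[x, x']$ appearing in the order required by Definition \ref{dfn:witnessing2}.

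The hyperbolic case is elementary manipulation with Gromov products; the bulk of the work lies in the Teichm\"uller case, where fellow-traveling is guaranteed only in the $\epsilon$-thick part, and one must ensure the two fellow-traveling regions sit inside $[x, x']$ in the correct order without overlapping. The cushion $d(y, z) \ge 3D$ (large compared to $d(x, y), d(z, x'), C, \epsilon$) is exactly what makes this geometric placement possible, and carefully tracking the constants through Rafi's theorems is the bookkeeping that would occupy most of the actual proof.
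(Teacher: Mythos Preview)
The paper does not actually contain a proof of this statement: it is stated as a Fact with an external citation to \cite[Lemma 3.6]{choi2021clt}, and the introduction explicitly says that the proofs of Facts \ref{lem:concat}, \ref{lem:concatUlt}, \ref{lem:farSegment}, \ref{lem:1segment} and \ref{lem:passerBy} are omitted. So there is no in-paper proof to compare your attempt against.

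That said, your hyperbolic case is correct and complete. You correctly identify that with $(x_{1},y_{1},x_{2},y_{2})=(x,y,z,x')$ the only nontrivial inequalities in Definition~\ref{dfn:witnessing1} are $(x,x')_{z}<D$ and $(x,x')_{y}<D$, and your derivation of $(x,x')_{z}<C+\delta$ via the ``two smallest Gromov products differ by at most $\delta$'' form of the Gromov inequality is clean. The choice $D=C+\delta$ works, and condition (3) is only needed in the weak form $d(y,z)>2C+\delta$ here.

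Your Teichm\"uller sketch is plausible in outline but is not a proof: you gesture at Rafi's theorems and say the constants would need to be tracked, without actually doing so. In particular, you have not spelled out which statement from \cite{rafi2014hyperbolicity} or \cite{choi2021clt} you are invoking to pass fellow-traveling from the legs $[x,z]$ and $[y,x']$ onto $[x,x']$ itself, nor have you verified that the resulting subsegments of $[x,x']$ are ordered correctly (this is where the hypothesis $d(y,z)\ge\max\{d(x,y),d(z,x')\}$ is genuinely used, not just $d(y,z)\ge 3D$). Since the paper itself defers this to the cited reference, the honest thing to do is either consult \cite{choi2021clt} directly or flag the Teichm\"uller case as a sketch relying on that reference.
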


\begin{fact}[{\cite[Lemma 3.7]{choi2021clt}}]\label{lem:1segment}
For each $C, \epsilon> 0$, there exists $D>C$ that satisfies the following condition. If $x$, $y$, $x'$, $z \in X$ satisfy that \begin{enumerate}
\item $[x, y]$, $[x', z]$ are $\epsilon$-thick and
\item $(x, z)_{y}$, $(x, x')_{z} < C$,
\end{enumerate}
then $[x, x']$ is $D$-witnessed by $([x, y], [z, x'])$.
\end{fact}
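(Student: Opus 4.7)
The plan is to split according to whether $X$ is Gromov hyperbolic or $\T(\Sigma)$, since Definitions \ref{dfn:witnessing1} and \ref{dfn:witnessing2} have different content. In the hyperbolic case the conclusion reduces to a pure triangle-inequality calculation and does not use $\epsilon$-thickness; in the Teichm\"uller case one must produce honest subsegments of $[x, x']$ that fellow travel $[x, y]$ and $[z, x']$, and this is where all of the hypotheses get used.

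For the Gromov hyperbolic case, I apply Definition \ref{dfn:witnessing1} with $n = 2$, $(x_1, y_1) = (x, y)$, $(x_2, y_2) = (z, x')$, $x_0 = y_0 = x$, $x_3 = y_3 = x'$. Six of the eight required Gromov product bounds are trivial: they either vanish because they pair an endpoint of a segment with itself, or else match the hypotheses $(x, z)_y, (x, x')_z < C$. The remaining bounds $(x, x')_y < D$ and $(y, x')_z < D$ follow by substituting the identity $d(x, x') = d(x, z) + d(z, x') - 2(x, x')_z$ into $2(x, x')_y = d(x, y) + d(x', y) - d(x, x')$, after applying the triangle inequality $d(x', y) \le d(x', z) + d(z, y)$, to obtain
\begin{equation*}
(x, x')_y \le (x, z)_y + (x, x')_z < 2C,
\end{equation*}
and a symmetric manipulation starting from $d(y, x') \ge d(x, x') - d(x, y)$ gives the same bound for $(y, x')_z$. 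Taking $D = 2C + 1$ therefore suffices, with no appeal to $\delta$ or thickness.

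For $X = \T(\Sigma)$ I construct the subsegments $[x_1', y_1'], [x_2', y_2'] \subset [x, x']$ in three stages. First, from $(x, z)_y < C$ and the $\epsilon$-thickness of $[x, y]$, Rafi's thin-triangle theorem produces a $D_1$-fellow travel of $[x, y]$ with a subsegment of $[x, z]$ from $x$ to a point near the projection of $y$ onto $[x, z]$; this subsegment inherits approximate thickness. Second, the identity $(z, x')_x = d(x, z) - (x, x')_z > d(x, z) - C$ forces a long initial fellow travel of $[x, z]$ with $[x, x']$, and Rafi's fellow traveling on the now-thick initial portion transports the first step onto a subsegment $[x_1', y_1'] \subset [x, x']$ that $D_2$-fellow travels $[x, y]$. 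Third, $(x, x')_z < C$ together with $\epsilon$-thickness of $[z, x']$ gives directly, by Rafi's fellow traveling, a $D_3$-fellow travel of $[z, x']$ with a subsegment $[x_2', y_2'] \subset [x, x']$ from near $z$ to $x'$. The ordering $x_2' \ge y_1'$ holds because $y_1'$ lies near the projection of $y$ onto $[x, x']$, which lies no further along $[x, x']$ than $z$, while $x_2'$ sits near $z$. The main obstacle is the first stage: converting the scalar bound $(x, z)_y < C$ into a genuine fellow-traveling statement in $\T(\Sigma)$, where thin regions can otherwise destroy the correspondence. Once this Rafi-type input is in hand, the rest of the argument is bookkeeping to track $D_1, D_2, D_3$ and combine them into a single $D$.
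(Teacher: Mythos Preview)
The paper does not prove this statement; it is quoted as a Fact from \cite{choi2021clt} and the introduction explicitly says the proofs of Facts \ref{lem:concat}, \ref{lem:concatUlt}, \ref{lem:farSegment}, \ref{lem:1segment}, \ref{lem:passerBy} are omitted. So there is nothing in the present paper to compare against, and one can only assess your argument on its own merits.

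Your $\delta$-hyperbolic case is correct. With $(x_{1},y_{1})=(x,y)$, $(x_{2},y_{2})=(z,x')$ in Definition~\ref{dfn:witnessing1}, six of the eight required Gromov products vanish or equal one of the hypotheses, and your computation $(x,x')_{y}\le (x,z)_{y}+(x,x')_{z}<2C$ and its symmetric twin $(y,x')_{z}<2C$ handle the remaining two; $D=2C+1$ works, with no use of $\delta$ or thickness.

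For $X=\T(\Sigma)$ your plan is viable but more circuitous than necessary, and the detour creates extra work. You route $[x,y]$ onto $[x,x']$ via $[x,z]$, which forces you in Step~2 to argue fellow-travelling along a segment that is only known to be thick on an initial portion inherited from Step~1; extracting a clean Rafi statement for ``thick subsegment of one side of a triangle'' is possible but is exactly the kind of bookkeeping you want to avoid. The shortcut is to reuse the purely metric inequality you already proved: $(x,x')_{y}<2C$ holds in any metric space, so Rafi's thin-triangle/fellow-travelling theorem applied directly to the triangle $x,y,x'$ with the $\epsilon$-thick side $[x,y]$ produces $[x_{1}',y_{1}']\subset[x,x']$ in one step. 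Your Step~3 for $[z,x']$ is already of this direct form. For the ordering condition, your heuristic ``the projection of $y$ lies no further along $[x,x']$ than $z$'' is only true up to an additive $C$: one checks $(z,x')_{x}\ge (y,x')_{x}-C$ from $(x,z)_{y}<C$ and the triangle inequality, so $x_{2}'$ may sit up to $C$ (plus fellow-travel constants) before $y_{1}'$. This is harmless---truncate one subsegment by that amount and absorb it into $D$---but it should be said rather than asserted as an exact inequality.
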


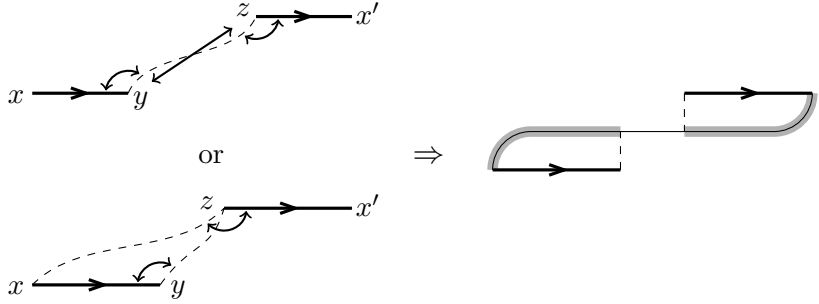
\begin{figure}[h]
\centering
\begin{tikzpicture}[scale=0.85]

\draw[very thick, decoration={markings, mark=at position 0.56 with {\draw (-0.2, 0.07) -- (0, 0) -- (-0.2, -0.07);}}, postaction={decorate}] (0, 0) -- (1.5, 0);
\draw[very thick, decoration={markings, mark=at position 0.556with {\draw (-0.2, 0.07) -- (0, 0) -- (-0.2, -0.07);}}, postaction={decorate}] (3.5, 1.2) -- (5, 1.2);
\draw[dashed] (1.5, 0) .. controls (1.8, 0.7) and (3.2, 0.5) .. (3.5, 1.2);
\draw[thick, <->, shift={(1.5, 0)}, rotate=-7] (-0.35, 0) arc (180:67:0.35);
\draw[thick, <->, shift={(3.5, 1.2)}, rotate=-7] (0.35, 0) arc (0:-113:0.35);
\draw[thick, <->] (1.87, 0.18) -- (3.13, 1.02);

\draw (-0.25, -0.05) node {$x$};
\draw (1.7, -0.11) node {$y$};
\draw (3.3, 1.31) node {$z$};
\draw (5.25, 1.25) node {$x'$};

\draw (2.8, -1) node {or};

\draw(6.2, -1) node {\large $\Rightarrow$};

\begin{scope}[shift={(0, -3)}]

\draw[very thick, decoration={markings, mark=at position 0.56 with {\draw (-0.2, 0.07) -- (0, 0) -- (-0.2, -0.07);}}, postaction={decorate}] (0, 0) -- (2, 0);
\draw[very thick, decoration={markings, mark=at position 0.556with {\draw (-0.2, 0.07) -- (0, 0) -- (-0.2, -0.07);}}, postaction={decorate}] (3, 1.2) -- (5, 1.2);
\draw[dashed] (2, 0) .. controls (2.5, 0.7) and (2.8, 0.5) .. (3, 1.2);
\draw[dashed] (0, 0) .. controls (0.8, 0.8) and (2.2, 0.4) .. (3, 1.2);
\draw[thick, <->, shift={(2, 0)}, rotate=-7] (-0.35, 0) arc (180:67:0.35);
\draw[thick, <->, shift={(3, 1.2)}, rotate=-7] (0.35, 0) arc (0:-130:0.35);

\draw (-0.25, -0.05) node {$x$};
\draw (2.28, -0.08) node {$y$};
\draw (2.73, 1.28) node {$z$};
\draw (5.25, 1.25) node {$x'$};

\end{scope}

\begin{scope}[shift={(7.2, -1.2)}]
\draw[line width = 1.4 mm, black!30]  (0, 0) arc (180:90:0.6) -- (2, 0.6);
\draw[line width = 1.4 mm, black!30]  (3, 0.6) --(4.4, 0.6) arc (-90:0:0.6);
\draw[very thick, decoration={markings, mark=at position 0.56 with {\draw (-0.2, 0.07) -- (0, 0) -- (-0.2, -0.07);}}, postaction={decorate}] (0, 0) -- (2, 0);
\draw[very thick, decoration={markings, mark=at position 0.56 with {\draw (-0.2, 0.07) -- (0, 0) -- (-0.2, -0.07);}}, postaction={decorate}] (3, 1.2) -- (5, 1.2);
\draw (0, 0) arc (180:90:0.6) -- (4.4, 0.6) arc (-90:0:0.6);
\draw[dashed] (2, 0) -- (2, 0.6);
\draw[dashed] (3, 1.2) -- (3, 0.6);

\end{scope}

\end{tikzpicture}
\caption{Schematics for Fact \ref{lem:farSegment}, \ref{lem:1segment}.}
\label{fig:scheme1}
\end{figure}

\emph{From now on, we permanently fix a base point $o \in X$ and  a finitely generated subgroup $G$ of $\Isom(X)$ that contains independent loxodromics $a$, $b$.} For $S \subset G$ and $i \in \Z$, we employ the notation $S^{(i)} := \{s^{i} : s \in S\}$.

Let us introduce the notion of Schottky sets that originated from \cite{gouezel2021exp}.

\begin{definition}[Schottky set]\label{dfn:Schottky}
Let $K, K', \epsilon> 0$. A finite set $S$ of isometries of $X$ is said to be \emph{$(K, K', \epsilon)$-Schottky} if the following hold: \begin{enumerate}
\item for all $x, y \in X$, $|\{s \in S : (x, s^{i}y)_{o} \ge K\,\,\textrm{for some}\,\,i>0\}| \le2$;
\item for all $x, y \in X$, $|\{s \in S : (x, s^{i}y)_{o} \ge K\,\,\textrm{for some}\,\,i<0\}| \le2$;
\item for all $s \in S$ and $i \neq 0$, $0.9995 iK' < d(o, s^{i} o)\le iK'$;
\item for all $s \in S$ and $i \in \Z$, the geodesic $[o, s^{i}o]$ is $\epsilon$-thick;
\item for all $x \in X$, $|\{s \in S : (x, s^{i} o)_{o} \ge K\,\,\textrm{for some}\,\,i>0\}| \le1$;
\item for all $x \in X$, $|\{s \in S : (x, s^{i} o)_{o} \ge K\,\,\textrm{for some}\,\,i<0\}| \le1$, and
\item for all $s_{1}, s_{2} \in S$ and $i, j > 0$, $(s_{1}^{i}o, s_{2}^{-j} o)_{o} < K$.
\end{enumerate}
\end{definition}

\begin{remark}
Gou{\"e}zel's original definition of Schottky set in \cite{gouezel2021exp} and Choi's definition in \cite{choi2021clt} do not require (5), (6) and (7); however, as remarked there, Schottky sets constructed in \cite{choi2021clt} automatically satisfy (5), (6) and (7).
\end{remark}

We now claim the existence of certain Schottky sets.

\begin{prop}[{cf. \cite[Proposition 4.2]{choi2021clt}}]\label{prop:Schottky}
There exists $\epsilon > 0$ such that the following holds. For any $n, n'>0$, there exist $K(n)>0$ and $K'(n, n')> n'$ such that $G$ has a $(K, K', \epsilon)$-Schottky subset $S$ with cardinality at least $n$.
\end{prop}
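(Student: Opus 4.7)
My plan is to construct $S$ as a family of sufficiently large powers of loxodromic elements drawn from a free ping-pong subgroup of $G$. The parameter $\epsilon$ is chosen once, independently of $n$ and $n'$, while the cardinality and length constraints are then met by tuning the exponents.

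First fix $\epsilon > 0$. In the Gromov hyperbolic case any $\epsilon > 0$ works. In the Teichm{\"u}ller case, after replacing $a, b$ by suitable conjugates in $G$ one may assume their axes pass through an $\epsilon$-thick region near $o$ for a uniform $\epsilon$, which is then fixed. Replacing $a, b$ further by large common powers, the subgroup $\Gamma_0 := \langle a, b\rangle \subseteq G$ is free of rank $2$, its generators have $\epsilon$-thick quasi-axes, and the classical ping-pong holds for its action on $\partial X$ (respectively $\mathcal{PMF}(\Sigma)$). Inside $\Gamma_0$, I take
\[
u_k \;:=\; a \, b^{k} \, a, \qquad k = 1, 2, \ldots, n.
\]
These are pairwise cyclically distinct elements of $\Gamma_0$, hence pairwise non-commuting loxodromics whose attracting and repelling fixed points are all distinct and in general position. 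Each geodesic $[o, u_k^i o]$ is naturally $D$-marked by the axial segments of $a$ and $b^{k}$ corresponding to the letters of the word $u_k^i$, so by Corollary \ref{cor:induction} it is $F$-witnessed by these $\epsilon$-thick segments and in particular is $\epsilon$-thick, verifying Schottky condition (4).

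For the length condition (3), let $\tau_k := \tau_X(u_k)$. The $\epsilon$-thick axial picture yields $A > 0$ (depending on $n$) with $d(o, u_k^M o) \le M\tau_k + A$ for all $k \le n$ and $M \ge 1$. Pick $T$ with $T \ge n'$ and $T \ge 2000(A + \max_k \tau_k)$, and for each $k$ take $M_k := \lceil T/\tau_k\rceil$, so that $M_k \tau_k \in [T, T+\tau_k]$. Put $s_k := u_k^{M_k}$ and $K' := T + A + \max_k \tau_k$. Then $\tau_X(s_k) = M_k\tau_k \ge T \ge 0.9995\, K'$ by the choice of $T$, and because $i \mapsto d(o, s_k^i o)/i$ is non-increasing with limit $\tau_X(s_k)$, we obtain $d(o, s_k^i o) \ge i\tau_X(s_k) \ge 0.9995\, iK'$. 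The upper bound $d(o, s_k^i o) \le iK'$ follows from $d(o, s_k o) \le \tau_X(s_k) + A \le K'$ together with the triangle inequality. Hence (3) holds with $K' > n'$ as required.

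The remaining conditions (1), (2), (5), (6), (7) are verified via the standard ping-pong / North--South picture. For exponents $M_k$ as large as ours, each positive orbit $\{s_k^i o : i \ge 1\}$ lies in a tiny shadow $V_k^{+}$ centered at the attracting fixed point of $u_k$, each negative orbit lies in a disjoint $V_k^{-}$, and the shadows $V_k^{\pm}$ are pairwise disjoint over $1 \le k \le n$ and signs by ping-pong. Any point of $X$ lies in at most one shadow, forcing (1), (2), (5), (6) with $K$ chosen large enough to bound Gromov products outside the shadows; (7) is precisely the disjointness of positive from negative shadows. The main obstacle is the tightness of the constant $0.9995$ in condition (3): the additive deviation of a loxodromic orbit from its axis must be absorbed by a $0.05\%$ multiplicative error, which forces $T$ (and hence $K'$) to be taken large relative to $A$ and $\max_k \tau_k$. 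This is precisely why $K'$ is allowed to depend on $n$. A secondary subtlety, specific to the Teichm{\"u}ller setting, is securing the initial $\epsilon$-thickness of the ping-pong data, which rests on Rafi's machinery underlying Facts \ref{lem:concat}--\ref{lem:1segment}.
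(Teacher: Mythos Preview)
Your strategy is in the same spirit as the paper's: build the Schottky set from words in large powers of the independent loxodromics $a,b$ and then pass to high powers to meet the length constraint. The paper, however, makes a more economical choice: it takes $\mathcal{S}$ to be the set of concatenations of exactly $n$ copies of $a^{2N}$ and $n$ copies of $b^{2N}$, and then a \emph{uniform} power $k$. Because every element of $\mathcal{S}$ has the same letter-count, the displacements $d(o, s^{k}o)$ agree up to an additive error $O(nkF)$, and the ratio $0.9995$ in condition~(3) falls out immediately once $N$ is large enough that $d(o,a^{2N}o),d(o,b^{2N}o)>10000F$. Your choice $u_k=ab^{k}a$ forces different powers $M_k$ to equalize lengths, which is workable but costs you the cleanliness of the argument; the paper's balanced-word trick is the main simplification you are missing.

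Two technical points deserve attention. First, the assertion that $i\mapsto d(o,s_k^{i}o)/i$ is non-increasing is false for general subadditive sequences; what you actually need, and what is true, is Fekete's lemma: the limit equals the infimum, so $d(o,s_k^{i}o)\ge i\,\tau_X(s_k)$. Second, and more seriously, you must be careful that $K$ depends only on $n$ and not on $n'$. Your shadows $V_k^{\pm}$ and the ping-pong constant should be set up for the base elements $u_1,\dots,u_n$ (this fixes $K=K(n)$), and only \emph{afterwards} should $T$ and the $M_k$ be chosen large. Your phrase ``for exponents $M_k$ as large as ours, each positive orbit lies in a tiny shadow'' reads as if the shadows shrink with $M_k$, which would make $K$ depend on $n'$. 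The paper handles this by defining the half-spaces $V(g_i^{\pm})$ in terms of the base words $g_i$, not their powers, and proving a genuine ping-pong lemma (its Appendix~A) that uses the witnessing machinery of Facts~\ref{lem:concat}--\ref{lem:passerBy}; your one-sentence ``standard ping-pong'' gloss hides this work, which in the Teichm\"uller case is not standard at all. The conjugation step you mention is also unnecessary: what is actually used (from \cite{choi2021clt}) is that $[o,a^{i}o]$ and $[o,b^{i}o]$ are $\epsilon_0$-thick for a fixed $\epsilon_0$, independent of where the axes sit relative to $o$.
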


\begin{proof}
The proof of \cite[Proposition 4.2]{choi2021clt} implies the following: 

\begin{claim}\label{claim:prop4.2}
There exists $\epsilon, F, N_{0} >0$ such that the following holds for all $N>N_{0}$: \begin{enumerate}[label=(\alph*)]
\item  for any $0\le m \le n$ and $\phi_{i} \in \{a, b, a^{-1}, b^{-1}\}$ such that $\phi_{i} \notin \phi_{i+1}^{-1}$, $[o, \phi_{1}^{2N} \cdots \phi_{n}^{2N} o]$ is $\epsilon$-thick and $(o, \phi_{1}^{2N} \cdots \phi_{n}^{2N} o)_{\phi_{1}^{2N} \cdots \phi_{m}^{2N} o} \le F$;
\item for all $n$ and $k$, the set \[
S_{n, k, N}:= \{(\phi_{1}^{2N} \cdots \phi_{n}^{2N})^{2k} : \phi_{i} \in \{a, b\}\}
\] satisfies Property (1), (2), (4), (5) and (6) of Schottky sets for \[
K(n, N) := \max\{ d(o, \phi_{1}^{2N} \cdots \phi_{n}^{2N} o) : \phi_{i} \in \{a, b\}\}.
\] 
\end{enumerate}
\end{claim}

The proof of this claim is given in Appendix \ref{section:remark}. Assuming this, we now take large enough $N$ such that $d(o, a^{2N} o), d(o, b^{2N} o) > 10000 F$ and fix $K(n): = K(n, N)$. Let $\mathcal{S}$ be the collection of concatenations of $n$ copies of $a^{2N}$ and $n$ copies of $b^{2N}$. For $s \in \mathcal{S}$ and $k$, the property (a) above implies\[
0 \le [2nkd(o, a^{2N} o) +2nkd(o, b^{2N} o)]- d(o, s^{k} o)\le 8nkF \le \frac{1}{2500} \cdot 2nk[d(o, a^{2N} o), d(o, b^{2N} o)].
\]
We finally fix $k$ such that $K'(n, n') := 2nk[d(o, a^{2N} o) + d(o, b^{2N} o)]$ is larger than $n'$. Then $\mathcal{S}^{(k)}$ is $(K, K', \epsilon)$-Schottky and has cardinality $\binom{2n}{n} \ge n$.
\end{proof}

\emph{From now on, we permanently fix the choice $\epsilon >0$ from Proposition \ref{prop:Schottky}}. Now for each $C>0$, we fix: \begin{itemize}
\item $D = D(C, \epsilon)$ that works in Fact \ref{lem:farSegment} and \ref{lem:1segment},
\item $E = E(D, \epsilon), L_{1} = L(D, \epsilon)$ as in Fact \ref{lem:concat},
\item $F = F(E, \epsilon), L_{2} = L(D, \epsilon)$ as in Fact \ref{lem:concatUlt}
\end{itemize}
and $L = \max(L_{1}, L_{2})$. Note that $D$, $F$ and $L$ ultimately depend on the values of $C$ and $\epsilon$; we will write them as $D(C, \epsilon)$, $F(C, \epsilon)$ and $L(C, \epsilon)$.

\begin{lem}\label{lem:almostInj}
Let $K>0$, $K' > 2L(K, \epsilon) + 5000F(K, \epsilon)$ and $S_{1}$ be a $(K, K', \epsilon)$-Schottky set. Then $S_{1}$ and $S_{1}^{(-1)}$ are disjoint. 

Moreover, if a nonempty sequence $(s_{i})_{i=1}^{N}$ of elements of $S_{1} \cup S_{1}^{(-1)}$ satisfy $s_{i} \neq s_{i+1}^{-1}$ for $i = 1, \ldots, N-1$, then $s_{1} \cdots s_{N} \neq id$.
\end{lem}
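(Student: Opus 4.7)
The plan is to treat the two assertions separately. For the disjointness, I argue by contradiction: if $s \in S_1 \cap S_1^{(-1)}$ with $s = t^{-1}$, $t \in S_1$, then Schottky property (7) applied to the pair $(s, t)$ with $i = j = 1$ forces
\[
d(o, so) = (so, so)_o = (so, t^{-1}o)_o < K,
\]
while property (3) forces $d(o, so) > 0.9995 K'$, contradicting $K' \gg K$ (which follows from $K' > 2L + 5000F$ and the chain of inequalities $K < D < E < F$ built into Facts \ref{lem:farSegment}--\ref{lem:concatUlt}).

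For the second assertion, set $g_k := s_1 \cdots s_k$; it suffices to show $d(o, g_N o) > 0$, which forces $g_N \neq \operatorname{id}$. The $N = 1$ case is immediate from Schottky property (3) applied to $s_1^{\pm 1}$. For $N \geq 2$, I would recognize $[o, g_N o]$ as a segment that is $D$-marked by a chain of $\epsilon$-thick, long Schottky segments, and then invoke Corollary \ref{cor:induction} to deduce $(o, g_N o)_{g_1 o} < F$, which together with $d(o, g_1 o) > 0.9995 K' \gg 2F$ forces $d(o, g_N o) > 0$. To set up the marking, I would take transition points $x_j := g_j o$ for $j = 1, \ldots, N-1$ and, to keep the witnessing condition unambiguous in both the Gromov-hyperbolic and Teichm{\"u}ller settings, use the two halves of each Schottky segment $g_{k-1} \cdot [o, s_k o]$ (split at its midpoint $p_k$) as the outgoing/incoming pieces at the adjacent transitions. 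Both halves are $\epsilon$-thick and of length $\approx K'/2 > L$ by Schottky properties (3) and (4), and the inter-transition witnessing of $[x_{j-1}, x_j]$ is realized by its own two halves meeting at $p_j$.

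The main technical step is verifying the transition Gromov product $(y_j, z_j)_{x_j} < D$. After conjugating by $g_j^{-1}$, this reduces via Gromov's inequality (resp.\ Rafi's thick fellow traveling) to the endpoint estimate $(s_{j+1} o, s_j^{-1} o)_o < K$. I would handle this by a case analysis on the signs $\epsilon_j, \epsilon_{j+1} \in \{\pm 1\}$ of $s_j = t_j^{\epsilon_j}$ and $s_{j+1} = t_{j+1}^{\epsilon_{j+1}}$: when $\epsilon_j = \epsilon_{j+1}$, the expression takes the form $(u^{+1}o, v^{-1}o)_o$ for some $u, v \in S_1$ and Schottky property (7) applies directly; when $\epsilon_j \neq \epsilon_{j+1}$, the hypothesis $s_j \neq s_{j+1}^{-1}$ becomes $t_j \neq t_{j+1}$, and Schottky property (6) (for $(+,-)$) or (5) (for $(-,+)$), applied with $x = t_j^{\pm 1}o$, rules out the unique exceptional element $s = t_j$. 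This sign analysis is the main obstacle, since each of the four patterns must be matched with the correct ``unique exceptional element'' clause of the Schottky definition and correctly translated from the non-cancellation hypothesis.
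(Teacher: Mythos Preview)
Your overall strategy matches the paper's: establish $(s_{j}^{-1}o, s_{j+1}o)_{o} < K$ by a sign case analysis using Schottky properties (5), (6), (7), feed this into a $D(K,\epsilon)$-marking of $[o, g_N o]$ by Schottky segments, and invoke Corollary~\ref{cor:induction}. Your disjointness argument via property (7) and your case analysis for the Gromov product are both correct and are exactly what the paper does (the paper packages the four cases into the single claim $\{s' \in S_1 \cup S_1^{(-1)} : (so, s'o)_o > K\} = \{s\}$).

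The one real problem is your midpoint construction. The paper explicitly allows $X$ to be a \emph{non-geodesic} Gromov hyperbolic space; there $[x,y]$ denotes the ordered pair $(x,y)$, not a geodesic, so the midpoint $p_k$ you split at simply does not exist, and Schottky property (4) gives you no half-segments to work with. Even in $\T(\Sigma)$, your reduction of $(p_{j+1}, p_j)_{g_j o} < D$ to the endpoint estimate $(s_{j+1}o, s_j^{-1}o)_o < K$ is not as immediate as you suggest and would need an extra fellow-travelling step. The paper avoids all of this by using the \emph{full} Schottky segments $[x_{i-1}, x_i]$ as the marking pieces: from $(x_{i-1}, x_{i+1})_{x_i} < K$ and Fact~\ref{lem:1segment} one gets that $[x_{i-1}, x_{i+1}]$ is $D$-witnessed by $([x_{i-1}, x_i],[x_i,x_{i+1}])$, and then $[x_0, x_N]$ is $D$-marked with $([x_{2i-1}, x_{2i}])_i$, $([x_{2i-2}, x_{2i-1}])_i$, placing transitions at the odd-indexed $x_i$ rather than at midpoints. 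This works verbatim in both settings and lets you drop the halving entirely.
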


\begin{proof}
For each $s \in S_{1}^{(\pm 1)}$, we claim that \[
\{s' \in S_{1} \cup S_{1}^{(-1)} : (so, s'o)_{o} > K\} = \{s\}.
\] Indeed, $(so, so)_{o} = d(o, so) > K$ and Property (5), (6), (7) of Schottky sets imply \[
\{s' \in S_{1} \cup S_{1}^{(-1)} : (so, s'o)_{o} > K\} = \{s' \in S_{1}^{(\pm 1)} : (so, s'o)_{o} > K\} = \{s\}.
\] In particular, $s' \in S_{1}^{(\mp 1)}$ cannot belong to this set; this settles the first claim.

We now let $x_{i} = s_{1} \cdots s_{i} o$ for $i=0, \ldots, N$. By the above claim, we realize that \[
(x_{i}, x_{i+2})_{x_{i}} = (s_{i+1}^{-1} o, s_{i+2}o)_{o}< K < D(K, \epsilon)
\] for $i=0, \ldots, N-2$. Then Fact \ref{lem:1segment} implies that $[x_{i}, x_{i+2}]$ is $D(K, \epsilon)$-witnessed by $([x_{i}, x_{i+1}], [x_{i+1}, x_{2}])$. Note also that $[x_{i}, x_{i+1}]$ is trivially $D(K, \epsilon)$-witnessed by $([x_{i}, x_{i+1}], [x_{i+1}, x_{i+1}])$, and by $([x_{i}, x_{i}], [x_{i}, x_{i+1}])$. Combining these observations, we deduce that $[x_{0}, x_{N}]$ is $D(K, \epsilon)$-marked with $([x_{2i-1}, x_{2i}])_{i=1}^{\lfloor N/2 \rfloor}$, $([x_{2i-2}, x_{2i-1}])_{i=1}^{\lfloor N/2 \rfloor}$. Since $[x_{i}, x_{i+1}]$ are $\epsilon$-thick and longer than $0.999K' > L(K, \epsilon)$,  Corollary \ref{cor:induction} implies that $(x_{0}, x_{i+1})_{x_{i}} < F(K, \epsilon)$ for each $i$ and \[
d(x_{0}, x_{N}) \ge \sum_{i=1}^{N} d(x_{i-1}, x_{i}) - 2(N-1) F(K, \epsilon) \ge (0.999K' - F(K, \epsilon))N \ge 0.9K'N.
\] Hence, $s_{1} \cdots s_{N} \neq id$.
\end{proof}

\begin{cor}\label{cor:almostInj}
Let $S_{1}$ be as in Lemma \ref{lem:almostInj}. Then the correspondence $a \mapsto a^{2}$ from $S_{1} \cup S_{1}^{(-1)}$ to $S_{1}^{(2)} \cup S_{1}^{(-2)}$ is 1-1.
\end{cor}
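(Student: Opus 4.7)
The plan is to argue by contradiction: suppose $a, b \in S_1 \cup S_1^{(-1)}$ satisfy $a^2 = b^2$ but $a \ne b$, and derive a contradiction by applying Lemma \ref{lem:almostInj} to the four-letter word $a \cdot a \cdot b^{-1} \cdot b^{-1}$, whose product is the identity under this assumption.

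To invoke Lemma \ref{lem:almostInj}, I need to check that the sequence $(s_1, s_2, s_3, s_4) = (a, a, b^{-1}, b^{-1})$ lies in $S_1 \cup S_1^{(-1)}$ (immediate, since this set is closed under inversion) and that no two consecutive letters are inverse to each other. The cross junction $s_2 \ne s_3^{-1}$ is exactly the hypothesis $a \ne b$. The junctions $s_1 \ne s_2^{-1}$ and $s_3 \ne s_4^{-1}$ reduce to $a \ne a^{-1}$ and $b \ne b^{-1}$, i.e.\ to $a^2 \ne \mathrm{id}$ and $b^2 \ne \mathrm{id}$. These in turn follow from Property (3) of a Schottky set: for any $s \in S_1$ and $i \ne 0$ we have $d(o, s^i o) \ge 0.9995\, |i| K' > 0$, so $s^i \ne \mathrm{id}$. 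Consequently the non-cancellation hypothesis of Lemma \ref{lem:almostInj} is satisfied.

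Lemma \ref{lem:almostInj} then yields $s_1 s_2 s_3 s_4 = a^2 b^{-2} \ne \mathrm{id}$, contradicting $a^2 = b^2$. Hence $a = b$, which is exactly the injectivity statement.

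I do not expect any real obstacle here: the content lies entirely in Lemma \ref{lem:almostInj}, and this corollary is a direct bookkeeping consequence, the only subtlety being the verification that the four-letter reduced word $a a b^{-1} b^{-1}$ is genuinely reduced, which is handled by the length estimate in Property (3) of the Schottky definition.
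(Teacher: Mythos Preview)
Your argument is correct and is precisely the intended derivation: the paper states this as an immediate corollary of Lemma~\ref{lem:almostInj} without further proof, and the natural reading is exactly your application of the second assertion of that lemma to the reduced word $a\,a\,b^{-1}\,b^{-1}$. One cosmetic remark: to verify $a\neq a^{-1}$ you may alternatively invoke the first assertion of Lemma~\ref{lem:almostInj} (disjointness of $S_{1}$ and $S_{1}^{(-1)}$) rather than Property~(3), but either route is fine.
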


\begin{definition}\label{dfn:nice}
We say that a finite set $S$ is \emph{nicely populated} by $S_{0}$ if $S_{0} \subseteq S$ and $\#S_{0} \ge 0.99 \cdot \#S+ 400$.
\end{definition}

\begin{lem}\label{lem:Schottky}
Given a finite set $S' \subseteq G$, there exist a $(K, K', \epsilon)$-Schottky subset $S_{1}$ of $G$ such that $K' > 2L(K, \epsilon)+5000F(K, \epsilon)$, and a finite symmetric generating set $S$ of $G$ such that $S' \cup \{e\} \subseteq S$ and $S$ is nicely populated by $S_{1}^{(2)} \cup S_{1}^{(-2)}$. 
\end{lem}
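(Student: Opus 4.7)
The plan is to define $S := T \cup S_{1}^{(2)} \cup S_{1}^{(-2)}$, where $T$ is a finite symmetric generating set of $G$ containing $S' \cup \{e\}$, and $S_{1}$ is a Schottky set produced by Proposition \ref{prop:Schottky} whose cardinality is taken large enough (relative to $\#T$) that the population condition of Definition \ref{dfn:nice} is automatic. Since Proposition \ref{prop:Schottky} allows both the cardinality of the Schottky set and the displacement threshold $K'$ to be pushed arbitrarily high, the whole lemma reduces to ordering the parameter choices correctly.

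Concretely, since $G$ is finitely generated, I would fix any finite symmetric generating set $T_{0}$ of $G$ and set $T := T_{0} \cup S' \cup (S')^{-1} \cup \{e\}$, which is symmetric, generates $G$, and contains $S' \cup \{e\}$. Let $t := \#T$, and choose an integer $n \ge 49.5\, t + 20000$. Proposition \ref{prop:Schottky} assigns to this $n$ a constant $K := K(n)$, which depends only on $n$ and on the globally fixed $\epsilon$. Next, set $n' := 2L(K, \epsilon) + 5000 F(K, \epsilon) + 1$ and apply Proposition \ref{prop:Schottky} a second time with parameters $(n, n')$ to obtain a constant $K' > n'$ and a $(K, K', \epsilon)$-Schottky subset $S_{1} \subseteq G$ with $\#S_{1} \ge n$. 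By construction $K' > 2L(K, \epsilon) + 5000 F(K, \epsilon)$, so Lemma \ref{lem:almostInj} gives $S_{1} \cap S_{1}^{(-1)} = \emptyset$ and Corollary \ref{cor:almostInj} gives the bijectivity of the squaring map $S_{1} \cup S_{1}^{(-1)} \to S_{1}^{(2)} \cup S_{1}^{(-2)}$, so that $\#(S_{1}^{(2)} \cup S_{1}^{(-2)}) = 2\#S_{1}$.

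Finally, set $S := T \cup S_{1}^{(2)} \cup S_{1}^{(-2)}$. This $S$ is symmetric (both pieces are), contains $S' \cup \{e\}$, and generates $G$ because $T$ does. For the nice-population inequality I would bound $\#S \le t + 2\#S_{1}$ and observe that $\#S_{1} \ge n \ge 49.5\, t + 20000$ rearranges to $0.02\, \#S_{1} \ge 0.99\, t + 400$, hence $2\#S_{1} \ge 0.99(t + 2\#S_{1}) + 400 \ge 0.99\, \#S + 400$, which is precisely Definition \ref{dfn:nice} with populating subset $S_{1}^{(2)} \cup S_{1}^{(-2)}$. The only real subtlety is the ordering of parameters: $K = K(n)$ must be read off first so that the threshold defining $n'$ can be stated, and only then can $K'$ be forced above it via the second application of Proposition \ref{prop:Schottky}. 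This is pure bookkeeping, with no new geometric input beyond the Schottky construction already in hand.
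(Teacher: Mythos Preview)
Your proof is correct and follows essentially the same approach as the paper's own argument: enlarge $S'$ to a symmetric generating set, use Proposition~\ref{prop:Schottky} to produce a sufficiently large Schottky set $S_{1}$ with $K'$ pushed above the threshold $2L(K,\epsilon)+5000F(K,\epsilon)$, invoke Corollary~\ref{cor:almostInj} to count $S_{1}^{(2)}\cup S_{1}^{(-2)}$, and take the union. The only differences are cosmetic (your $n \ge 49.5t + 20000$ versus the paper's $n = 100\#S'' + 40000$), and your explicit verification of the nice-population inequality is a bit more detailed than the paper's.
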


\begin{proof}
We first enlarge $S'$ into a finite symmetric generating set $S''$ containing $e$. Let $n = 100\#S'' + 40000$ and take $K = K(n) > 0$ from Proposition \ref{prop:Schottky}. We then take $F = F(K, \epsilon)$ and $L = L(K, \epsilon)$ as described before. Using Proposition \ref{prop:Schottky}, we take $K'> n'=2L+5000F$ and a $(K, K', \epsilon)$-Schottky subset $S_{1}$ of $G$ with cardinality at least $n$. Thanks to Corollary \ref{cor:almostInj}, we also have $\#\left(S_{1}^{(2)} \cup S_{1}^{(-2)}\right)= 2 \cdot \#S_{1} \ge n$. Hence, the union of $S''$, $S_{1}^{(2)}$, and $S_{1}^{(-2)}$ satisfies the desired property.
\end{proof}

\emph{From now on, we fix constants $K>0$, $K' > 2L(K, \epsilon) + 5000F(K, \epsilon)$, a $(K,K', \epsilon)$-Schottky set $S_{1}$ with $K' > 2L(K, \epsilon)+5000F(K, \epsilon)$, and a finite symmetric generating set $S\ni e$ of $G$ that is nicely populated by $S_{1}^{(2)} \cup S_{1}^{(-2)}$.} For $g \in G$ and $s \in S_{1} \cup S_{1}^{(2)}$, we call $[go, gso]$ a \emph{Schottky segment}. One should keep in mind that Schottky segments are $\epsilon$-thick and longer than $0.999K'$. Finally, we will denote $S_{1} \cup S_{1}^{(-1)}$ by $S_{0}$.

%
%

\section{Pivoting in a random walk} \label{section:pivot}

We will make use of the random walk on $G$ generated by the uniform measure $\mu_{S}$ on $S$ that is constructed as follows. We consider the \emph{step space} $(G^{\Z}, \mu_{S}^{\Z})$, the product space of $G$ equipped with the product measure of $\mu_{S}$. Each step path $(g_{n})$ induces a sample path $(\w_{n})$ by \[
\w_{n} = \left\{ \begin{array}{cc} g_{1} \cdots g_{n} & n > 0 \\ e & n=0 \\ g_{0}^{-1} \cdots g_{n+1}^{-1} & n < 0, \end{array}\right.
\]
which constitutes a random walk with transition probability $\mu_{S}$.

Given $\mathbf{g} = (g_{1}, \ldots, g_{n}) \in G^{n}$, we define\[
\Theta(\mathbf{g})= \{ \vartheta(1) < \ldots < \vartheta(N)\} := \left\{1 \le i \le n/2 : g_{2i-1}, g_{2i} \in S_{1}^{(2)} \cup S_{1}^{(-2)}\right\}.
\] 
In other words, $\Theta(\mathbf{g})$ is the collection of steps that are chosen from $S_{1}^{(2)} \cup S_{1}^{(-2)}$. This set can well be empty, although such a situation happens with small probability. We now pick pivotal times from $\Theta(\mathbf{g})$.

For each $1 \le i \le N$, let $a_{i}, b_{i}$ be the elements of $S_{0}=S_{1} \cup S_{1}^{-1}$ such that \[
a_{i}^{2} = g_{2\vartheta(i) - 1}, \quad b_{i}^{2} = g_{2\vartheta(i)}. 
\]
Such $a_{i}$, $b_{i}$ are uniquely determined thanks to Corollary \ref{cor:almostInj}.

We also define $w_{i} := g_{2\vartheta(i) + 1} \cdots g_{2 \vartheta(i+1) - 2}$ for $1 \le i \le N-1$ and $w_{0} := g_{1} \cdots g_{2\vartheta(1) - 2}$, $w_{N} := g_{2\vartheta(N) +1} \cdots g_{n}$. It is clear that \[
\w_{n} = g_{1} g_{2} \cdots g_{n} = w_{0}a_{1}^{2} b_{1}^{2} w_{1} \cdots a_{N}^{2} b_{N}^{2} w_{N}.
\]

\begin{remark}
It will be convenient to allow expression $\w_{2\vartheta(N+1) - 2}$ and interpret it as $\w_{n}$, even though $\vartheta(N+1)$ does not exist (there is no reason to not define $\vartheta(N+1) := (n+2)/2$ if one hopes). This way, we can say: \[
\begin{aligned}
\w_{2\vartheta(k) - 2} &:= w_{0} a_{1}^{2}b_{1}^{2} \cdots w_{k-1} & (k = 1, \ldots, N+1), \\
\w_{2\vartheta(k) - 1} &:= w_{0} a_{1}^{2}b_{1}^{2} \cdots w_{k-1} a_{k}^{2} & (k = 1, \ldots, N), \\
\w_{2\vartheta(k)} &:= w_{0} a_{1}^{2}b_{1}^{2} \cdots w_{k-1} a_{k}^{2}b_{k}^{2} & (k = 1, \ldots, N).
\end{aligned}
\]
\end{remark}

\begin{figure}
\centering
\begin{tikzpicture}[scale=1.35]
\draw[thick, dashed] (-1.2, 1.2) -- (0, 0);
\draw[very thick] (0, 0) -- (0.84, 0.245) -- (1.085, 1.085) -- (2.2, 1.9);
\draw[thick, dashed, shift={(3.4, 0)}] (-1.2, 1.9) -- (0, 0);
\draw[very thick, shift={(3.4, 0)}] (0, 0) -- (0.84, 0.245) -- (1.085, 1.085) -- (2.2, 1.9);
\draw[thick, dashed, shift={(6.8, 0)}] (-1.2, 1.9) -- (0, 0.8);

\draw (-1.22, 1.4) node {$o$}; 
\draw (-0.75, 0.43) node {$w_{0}$}; 
\draw (-0.03, -0.22) node {$\w_{2\vartheta(1) - 2} o$};
\draw (1.44, 0.03) node {$\w_{2\vartheta(1) - 2}a_{1} o$};
\draw (0.57, 1.21) node {$\w_{2\vartheta(1) - 1} o$};
\draw (2.14, 2.1) node {$\w_{2\vartheta(1)} o$};

\draw (2.63, 0.8) node {$w_{1}$};
\draw (6.5, 1.42) node {$w_{N}$};

\draw (0.41, 0.31) node {$a_{1}$};
\draw (1.19, 0.62) node {$a_{1}$};
\draw (1.77, 1.32) node {$b_{1}^{2}$};

\draw (6.94, 0.57) node {$\w_{n} o = \w_{2\vartheta(N+1) - 2} o$};

\begin{scope}[shift={(3.4, 0)}]
\draw (-0.03, -0.22) node {$\w_{2\vartheta(2) - 2} o$};
\draw (1.44, 0.03) node {$\w_{2\vartheta(2) - 2}a_{2} o$};
\draw (0.57, 1.21) node {$\w_{2\vartheta(2) - 1} o$};
\draw (2.14, 2.1) node {$\w_{2\vartheta(2)} o$};

\draw (0.41, 0.31) node {$a_{2}$};
\draw (1.19, 0.62) node {$a_{2}$};
\draw (1.77, 1.32) node {$b_{2}^{2}$};
\end{scope}

\end{tikzpicture}
\caption{Words $w_{j}$'s, $a_{j}$'s and $b_{j}$'s that arise from a trajectory.}
\label{fig:trajectory}
\end{figure}
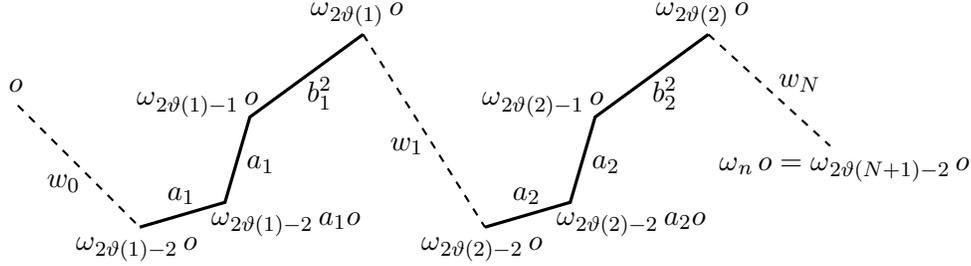

We now inductively define sets $P_{k}(\mathbf{g}) \subseteq \{1, \ldots, k\}$ and a moving point $z_{k}$ for $k = 0, \ldots, N$. First take $P_{0} := \emptyset$ and $z_{0} := o$. Now given $(P_{k-1}, z_{k-1})$, $(P_{k}, z_{k})$ is determined as follows (see Figure \ref{fig:Criterion}).\begin{enumerate}[label=(\Alph*)]
\item  If $a_{k} \neq b_{k}^{-1}$ and \begin{equation}\label{eqn:condA}\begin{aligned}
(z_{k-1}, \w_{2\vartheta(k) - 2} a_{k}^{t}o)_{\w_{2\vartheta(k) - 2} o} &< K \quad \textrm{for} \,\,t \in \{1, 2\}, \\
(\w_{2\vartheta(k) - 1} o, \w_{2\vartheta(k+1) - 2} o)_{\w_{2\vartheta(k)} o} &< K
\end{aligned}
\end{equation}
hold, then we set $P_{k} := P_{k-1} \cup \{k\}$ and $z_{k}: = \w_{2\vartheta(k) - 1} o$. Note that Inequality \ref{eqn:condA} is equivalent to \begin{equation}\label{eqn:varCondA}\begin{aligned}
(\w_{2\vartheta(k) - 2}^{-1} z_{k-1}, a_{k}^{t}o)_{o} &< K \quad \textrm{for} \,\,t \in \{1, 2\}, \\
(b_{k}^{-2} o, w_{k}o)_{o} &< K.
\end{aligned}
\end{equation}
\item If not, we seek sequences $\{i(1) < \cdots < i(M) \} \subseteq P_{k-1}$ with cardinality $M \ge 2$ such that: \begin{enumerate}[label=(\roman*)]
\item $[\w_{2\vartheta(i(1))- 1} o, \w_{2 \vartheta(i(M)) - 2} a_{i(M)} o]$ is fully $D(K, \epsilon)$-marked with $(\gamma_{j})_{j=1}^{M-1}$, $(\eta_{j})_{j=2}^{M}$, where \[\begin{aligned}
\gamma_{1} &= [\w_{2\vartheta(i(1)) - 1} o, \w_{2\vartheta(i(1))} o],\\
\gamma_{j} &=  [\w_{2\vartheta(i(j)) - 2} a_{i(j)} o, \w_{2\vartheta(i(j))- 1} o] \quad (2\le j \le M-1), \\
\eta_{j} &= [\w_{2\vartheta(i(j)) - 2}o, \w_{2\vartheta(i(j)) - 2} a_{i(j)} o] \quad (2 \le j \le M),
\end{aligned}
\]
\item $(\w_{2\vartheta(i(M)) - 2} a_{i(M)} o, \w_{2\vartheta(k+1) - 2} o)_{\w_{2\vartheta(i(M)) - 1}o} < K$.
\end{enumerate}
If exists, let $\{i(1) < \cdots < i(M)\}$ be such a sequence with maximal $i(1)$; we set $P_{k} := P_{k-1} \cap \{1, \ldots, i(1)\}$ and $z_{n}:= \w_{2\vartheta(i(M)) - 2} a_{i(M)} o$. If such a sequence does not exist, then we set $P_{k} := \emptyset$ and $z_{k} := o$.\footnote{When there are several sequences that realize maximal $i(1)$, we choose the maximum in the lexicographic order on the length of sequences and $i(2)$, $i(3)$, $\ldots$.}
\end{enumerate}

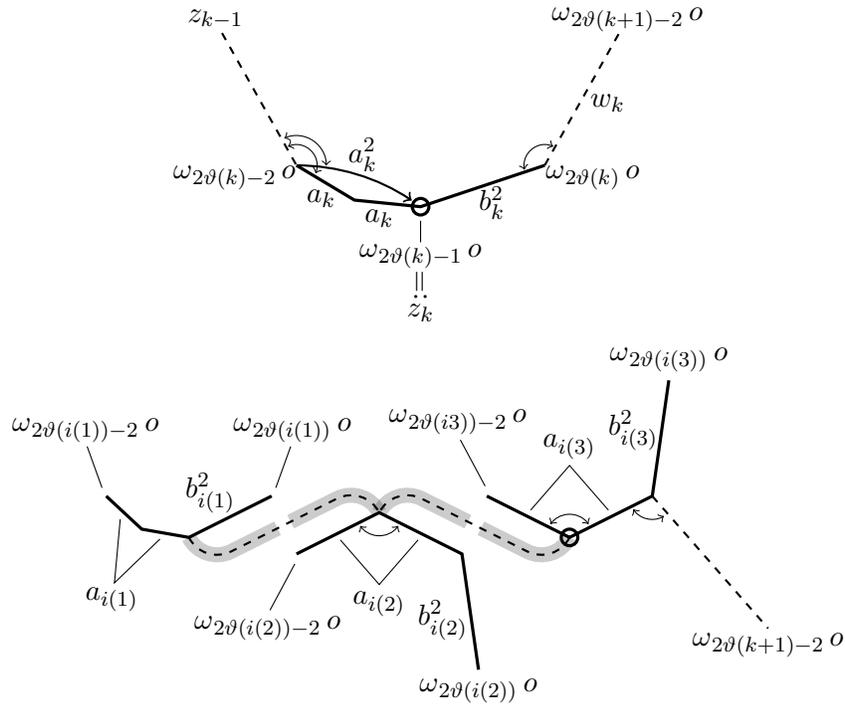
\begin{figure}
\centering
\begin{tikzpicture}
\def\c{1.1}
\draw[very thick] (0, 0) -- (0.7*\c, -0.42*\c) -- (1.5*\c, -0.5*\c) -- (3*\c, 0);
\draw[dashed, thick] (-0.9*\c, 1.6*\c) -- (0, 0);
\draw[dashed, thick] (3*\c, 0) -- (3.9*\c, 1.6*\c);
\draw[->, thick] (0, 0) arc (90: 90 - 36.869897645844021 +3: 2.5*\c);
\draw[<->] (-0.25*0.4067366430758*\c, 0.25*0.913545457642601*\c) arc (114:-22.4:0.25*\c);
\draw[<->, shift={(3*\c, 0)}, xscale=-1] (-0.25*0.4067366430758*\c, 0.25*0.913545457642601*\c) arc (114:-12:0.25*\c);
\draw[<->] (-0.36*0.4067366430758*\c, 0.36*0.913545457642601*\c) arc (114:-0.4:0.36*\c);

\draw (-0.98*\c, 1.77*\c) node {$z_{k-1}$};
\draw (-0.75*\c, -0.15*\c) node {$\w_{2\vartheta(k) - 2} o$};
\draw (0.3*\c, -0.4*\c) node {$a_{k}$};
\draw (1*\c, -0.64*\c) node {$a_{k}$};
\draw (0.8*\c, 0.13*\c) node {$a_{k}^{2}$};
\draw (1.5*\c, -1.1*\c) node {$\w_{2\vartheta(k) - 1}o$};
\draw (1.5*\c, -0.67*\c) -- (1.5*\c, -0.93*\c);
\draw (1.5*\c, -1.45*\c) node[rotate=90] {$:=$};
\draw (1.5*\c, -1.78*\c) node {$z_{k}$};
\draw (3.58*\c, -0.15*\c) node {$\w_{2\vartheta(k)}o$};
\draw (2.36*\c, -0.45*\c) node {$b_{k}^{2}$};
\draw (3.76*\c, 0.76*\c) node {$w_{k}$};
\draw (4*\c, 1.77*\c) node {$\w_{2\vartheta(k+1) - 2} o$};
\draw[very thick] (1.5*\c, -0.5*\c) circle (0.1*\c);

\begin{scope}[shift={(-1.3*\c, -4.5*\c)}]
\draw[very thick] (-\c, 0.5*\c) --  (-0.57*\c, 0.1*\c) -- (0, 0) -- (\c, 0.5*\c);
\draw[very thick] (1.3*\c, -0.2*\c) -- (2.3*\c, 0.3*\c) -- (3.3*\c, -0.2*\c) -- (3.5*\c, -1.6*\c);
\draw[very thick] (3.6*\c, 0.5*\c) -- (4.6*\c, 0*\c) -- (5.6*\c, 0.5*\c) -- (5.8*\c, 1.9*\c);

\draw[dashed, thick, rotate=26.565051177077989] (0, 0) arc (180:270:0.380131556174964*\c) -- (2.191346617949794*\c - 0.380131556174964*\c, -0.380131556174964*\c) arc (90:0: 0.380131556174964*\c);

\draw[dashed, thick, shift={(4.6*\c, 0)}, xscale=-1, rotate=26.565051177077989] (0, 0) arc (180:270:0.380131556174964*\c) -- (2.191346617949794*\c - 0.380131556174964*\c, -0.380131556174964*\c) arc (90:0: 0.380131556174964*\c);

\draw[opacity=0.2, line width=2mm, rotate=26.565051177077989] (0, 0) arc (180:270:0.380131556174964*\c) -- (0.46*2.191346617949794*\c, -0.380131556174964*\c);

\draw[opacity=0.2, line width=2mm, shift={(2.3*\c, 0.3*\c)}, rotate=180+26.565051177077989] (0, 0) arc (180:270:0.380131556174964*\c) -- (0.46*2.191346617949794*\c, -0.380131556174964*\c);

\draw[opacity=0.2, line width=2mm, shift={(2.3*\c, 0.3*\c)}, xscale=-1, rotate=180+26.565051177077989] (0, 0) arc (180:270:0.380131556174964*\c) -- (0.46*2.191346617949794*\c, -0.380131556174964*\c);

\draw[opacity=0.2, line width=2mm, shift={(4.6*\c, 0)}, xscale=-1, rotate=26.565051177077989] (0, 0) arc (180:270:0.380131556174964*\c) -- (0.46*2.191346617949794*\c, -0.380131556174964*\c);

\draw[dashed, thick] (5.6*\c, 0.5*\c) -- (7*\c, -1.1*\c);

\draw (-0.93*\c, -0.75*\c) node {$a_{i(1)}$};
\draw (-0.83*\c, 0.22*\c) -- (-0.9*\c, -0.55*\c) -- (-0.35*\c, -0.02*\c);
\draw (0.26*\c, 0.54*\c) node {$b_{i(1)}^{2}$};

\draw (2.3*\c, -0.8*\c) node {$a_{i(2)}$};
\draw (1.83*\c,  -0.03*\c) -- (2.3*\c, -0.57*\c) -- (2.77*\c, -0.03*\c);
\draw (3.07*\c, -1*\c) node {$b_{i(2)}^{2}$};

\draw (4.6*\c, 1.1*\c) node {$a_{i(3)}$};
\draw (4.13*\c,  0.33*\c) -- (4.6*\c, 0.87*\c) -- (5.07*\c, 0.33*\c);
\draw (5.37*\c, 1.3*\c) node {$b_{i(3)}^{2}$};

\draw[<->, shift={(2.3*\c, 0.3*\c)}, rotate=-35] (0.28*\c, 0) arc (0:-110:0.28*\c);
\draw[<->, shift={(4.6*\c, 0*\c)}, rotate=35] (0.28*\c, 0) arc (0:110:0.28*\c);
\draw[<->, shift={(5.6*\c, 0.5*\c)}, rotate=-60] (0.28*\c, 0) arc (0:-85:0.28*\c);

\draw (-1.25*\c, 1.3*\c) node {$\w_{2\vartheta(i(1)) - 2} o$};
\draw (1.25*\c, 1.3*\c) node {$\w_{2\vartheta(i(1))} o$};

\draw (-1.23*\c, 1.1*\c) -- (-1.05*\c, 0.6*\c);
\draw (1.23*\c, 1.1*\c) -- (1.05*\c, 0.6*\c);

\draw(0.95*\c, -1.1*\c) node {$\w_{2\vartheta(i(2)) - 2}o$};
\draw (3.5*\c, -1.85*\c) node {$\w_{2\vartheta(i(2))}o$};
\draw(0.98*\c, -0.88*\c) -- (1.28*\c, -0.32*\c);

\begin{scope}[shift={(2.3*\c, 0)}, shift={(0, 0.15*\c)}, yscale=-1, shift={(0, -0.15*\c)}]
\draw(0.95*\c, -1.1*\c) node {$\w_{2\vartheta(i3)) - 2}o$};
\draw (3.5*\c, -1.85*\c) node {$\w_{2\vartheta(i(3))}o$};
\draw(0.98*\c, -0.88*\c) -- (1.28*\c, -0.32*\c);
\end{scope}

\draw (7*\c, -1.3*\c) node {$\w_{2\vartheta(k+1) - 2} o$};
\draw[very thick] (4.6*\c, 0*\c) circle (0.1*\c);

\end{scope}

\end{tikzpicture}
\caption{Schematics for Criteria (A), (B) for the construction of $P_{k}$. The upper configuration describes the situation when $k$ is added in $P_{k}$. In the lower configuration, $\{i(1) < i(2) < i(3)\}$ satisfies items (i), (ii) in Criterion (B). Here, the shaded subsegments of the dashed lines fellow travel $\gamma_{1}$, $\eta_{2}$, $\gamma_{2}$ and $\eta_{3}$, from left to right, respectively. The newly chosen $z_{k}$ is highlighted by a circle.}
\label{fig:Criterion}
\end{figure}

Figure \ref{fig:pivoting} illustrates how $P_{k}$ evolves as $k$ increases. The path $\mathbf{g}$ under consideration has $\Theta(\mathbf{g}) = \{3, 6, 10, 13, 19\}$. Note that \[
\w_{0} o = o, \,\, \w_{2\vartheta(1) - 2} o = \w_{4} o,\,\,  \w_{2\vartheta(1) - 1} o=\w_{5} o, \,\, \w_{2\vartheta(1)} = \w_{6} o, \,\,  \w_{2\vartheta(2) - 2} o = \w_{10} o
\] are arranged as required in Criterion (A), which implies $P_{1}(\mathbf{g})= \{1\}$. Since $\w_{5} o$, $[\w_{10} o, \w_{11} o]$, $[\w_{11} o, \w_{12} o]$, $\w_{18} o$ are arranged as desired, $P_{2}(\mathbf{g}) = \{1, 2\}$ (even though $(\w_{11} o, \w_{i} o)_{\w_{12} o}$ is not always small for all $i > 12$). By a similar reason we have $P_{3}(\mathbf{g}) = \{1, 2, 3\}$. 

Since $\w_{36} o$ is on the left of $[\w_{25} o, \w_{26} o]$, however, $P_{4}(\mathbf{g})$ is not $\{1, 2, 3, 4\}$. If $\w_{36} o$ were on the right of $[\w_{24} o, \w_{25} o]$ at least, $P_{4}(\mathbf{g}) = \{1, 2, 3\}$ might have held; but it is not the case. Since $\w_{36} o$ is not on the right of $[\w_{18} o, \w_{19} o]$, $P_{4}(\mathbf{g}) = \{1, 2\}$ cannot hold either; we only have $P_{4}(\mathbf{g}) = \{1\}$. $P_{5}(\mathbf{g})$ then becomes $\{1, 5\}$. 

\begin{figure}
\centering
\begin{tikzpicture}
\def\c{0.5}
\draw (0, 0) -- (0.18*\c, 1.83*\c) -- (1.7*\c, 2.03*\c) -- (2.05*\c, 1*\c) -- (0.38*\c, -0.45*\c) --    (1.76*\c, -0.9*\c) -- (3*\c, 0) -- (5.24*\c, 1.3*\c) -- (3.87*\c, 2.84*\c) -- (5.9*\c, 3.07*\c) -- (5.8*\c, 0.2*\c) --    (7.78*\c, 0.86*\c) -- (9.25*\c, 0) -- (8.38*\c, -1.17*\c) -- (7.4*\c, -0.63*\c) -- (10.3*\c, 1.11*\c) -- (10.2*\c, -1.02*\c) -- (9*\c, 1.4*\c) -- (12.4*\c, 1.2*\c) --    (14.3*\c, 1.83*\c) -- (15.8*\c, 1.35*\c) -- (17.8*\c, 3*\c)  -- (18.5*\c, \c) -- (16.83*\c, 2.5*\c) -- (16.22*\c, -0.1*\c) --   (18*\c, -0.04*\c) -- (19.73*\c, 0.5*\c) -- (20.5*\c, -0.7*\c) -- (17.75*\c, -0.78*\c) -- (18.07*\c, -2.35*\c) -- (19.02*\c, -1.48*\c) -- (15.5*\c, -1.8*\c) -- (13.12*\c, -1.04*\c) -- (13.63*\c, -2.4*\c) -- (13.66*\c, -1.93*\c) -- (12*\c, -2.7*\c) -- (11.3*\c, 0.5*\c) -- (13*\c, -0.05*\c) -- (14.7*\c, -0.01*\c) -- (14.6*\c, -1.2*\c) -- (15.2*\c, -0.3*\c);
\draw[line width=0.7mm] (0.38*\c, -0.45*\c) --    (1.76*\c, -0.9*\c) -- (3*\c, 0);
\draw[line width=0.7mm] (5.8*\c, 0.2*\c) --    (7.78*\c, 0.86*\c) -- (9.25*\c, 0) ;
\draw[line width=0.7mm] (12.4*\c, 1.2*\c) --    (14.3*\c, 1.83*\c) -- (15.8*\c, 1.35*\c);
\draw[line width=0.7mm] (16.22*\c, -0.1*\c) --   (18*\c, -0.04*\c) -- (19.73*\c, 0.5*\c) ;
\draw[line width=0.7mm]   (11.3*\c, 0.5*\c) -- (13*\c, -0.05*\c) -- (14.7*\c, -0.01*\c)  ;

\draw[opacity=0.2, line width = 3.5mm, line cap=round]  (1.76*\c, -0.9*\c) -- (3*\c, 0) --  (5.8*\c, 0.2*\c) -- (5.8*0.67*\c +  7.78*0.33*\c, 0.2*0.65*\c + 0.86*0.35*\c);

\draw[opacity=0.14, line width = 3.5mm, line cap=round]  (5.8*0.33*\c +  7.78*0.67*\c, 0.2*0.35*\c + 0.86*0.65*\c) --   (7.78*\c, 0.86*\c) --  (11.3*\c, 0.5*\c) ;

\fill (0, 0) circle (0.06);
\draw (0, -0.25) node {$o$};
\fill (15.2*\c, -0.3*\c) circle (0.06);
\draw (15.2*\c +0.35, -0.3*\c - 0.25) node {$\w_{40}o$};

\end{tikzpicture}
\caption{An example of a sample path $\mathbf{g}$ with length 40. The vertices represent $\w_{i} o$ for $i = 0, \ldots, 40$; the thick segments represent Schottky progresses. The shaded region highlights the required witnessing by Schottky segments in Criterion (B) for $P_{4}(\mathbf{g})$.
}
\label{fig:pivoting}
\end{figure}
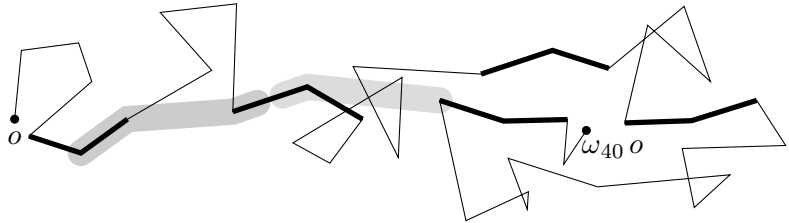

Note the following facts:
\begin{enumerate}
\item $P_{k}(\mathbf{g})$ is measurable with respect to the choice of $g_{i}$.
\item $i \in P_{m}$ only if $P_{i} = P_{i-1} \cup \{i\}$ and $i$ survives during stage $i+1, \ldots, m$.
\item If $i \in P_{m}$ and $i \in P_{n}$, then $\{1, \ldots, i\} \cap P_{m} = \{1, \ldots, i\} \cap P_{n}$.
\end{enumerate}

\begin{lem}\label{lem:intermediate}
The following holds for any $0 \le k \le n$ and $\mathbf{g} \in G^{n}$. Let $l< m$ be consecutive elements in $P_{k}= P_{k}(\mathbf{g})$, i.e., $l, m \in P_{k}$ and $l = \max (P_{k} \cap \{1, \ldots, m-1\})$. Let also $t \in \{1, 2\}$. Then there exists a sequence \[
\{l = i(1)< \ldots < i(M') =m\} \subseteq P_{k}
\] with cardinality $M' \ge 2$ such that $[\w_{2\vartheta(l) - 1} o, \w_{2\vartheta(m) - 2} a_{m}^{t} o]$ is fully $D(K, \epsilon)$-marked with $(\gamma_{j})_{j=1}^{M'-1}$, $(\eta_{j})_{j=2}^{M'}$, where \begin{equation}\label{eqn:intermediate}\begin{aligned}
\gamma_{1} &= [\w_{2\vartheta(i(1)) - 1} o, \w_{2\vartheta(i(1))} o],\\
\gamma_{j} &=  [\w_{2\vartheta(i(j)) - 2} a_{i(j)} o, \w_{2\vartheta(i(j))- 1} o] \quad (2 \le j \le M'-1), \\
\eta_{j} &= [\w_{2\vartheta(i(j)) - 2}o, \w_{2\vartheta(i(j)) - 2} a_{i(j)} o] \quad (2 \le j \le M'-1), \\ 
\eta_{M'} &= [\w_{2\vartheta(i(M)) - 2} o, \w_{2\vartheta(i(M')) - 2} a_{m}^{t} o].
\end{aligned}
\end{equation}
\end{lem}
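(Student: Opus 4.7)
The plan is to induct on $k$, with $k=0$ vacuous. Since $l$ and $m$ are consecutive in $P_k$ and the sequence $\{l = i(1) < \cdots < i(M') = m\}$ is required to lie in $P_k$, we must have $M' = 2$ with $i(1) = l$, $i(2) = m$, so the required full marking reduces to showing that $[\w_{2\vartheta(l)-1} o, \w_{2\vartheta(m)-2} a_m^t o]$ is $D(K, \epsilon)$-witnessed by the pair
\[
\bigl([\w_{2\vartheta(l)-1} o, \w_{2\vartheta(l)} o],\; [\w_{2\vartheta(m)-2} o, \w_{2\vartheta(m)-2} a_m^t o]\bigr).
\]
Since $m \in P_m$, Criterion (A) must have fired at stage $m$, and by the observation $P_{m-1} = P_k \cap \{1, \ldots, l\}$ one has $l = \max P_{m-1}$. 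I split the inductive step according to what happened at stage $m-1$: either Criterion (A) fired (forcing $l = m-1$), or Criterion (B) fired successfully with a sequence whose first element is $l$.

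In the first sub-case $l = m-1$, one has $z_{m-1} = \w_{2\vartheta(l)-1} o$, so the two Criterion (A) conditions at stages $l$ and $m$ deliver $(\w_{2\vartheta(l)-1} o, \w_{2\vartheta(m)-2} o)_{\w_{2\vartheta(l)} o} < K$ and $(\w_{2\vartheta(l)-1} o, \w_{2\vartheta(m)-2} a_m^t o)_{\w_{2\vartheta(m)-2} o} < K$, and Fact~\ref{lem:1segment} applied with $C = K$ yields the required $D(K, \epsilon)$-witnessing (after noting that $\gamma_1, \eta_2$ are Schottky and therefore $\epsilon$-thick).

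In the second sub-case $l < m-1$, Criterion (B) fired at $m-1$ with some sequence $l = i^*(1) < \cdots < i^*(M^*)$ in $P_{m-2}$, and $z_{m-1} = \w_{2\vartheta(i^*(M^*))-2} a_{i^*(M^*)} o$. Three inputs are now available: (a) the $D(K,\epsilon)$-marking of $[\w_{2\vartheta(l)-1}o, z_{m-1}]$ along the $i^*(j)$'s built into Criterion (B); (b) the Criterion (B) condition $(z_{m-1}, \w_{2\vartheta(m)-2} o)_{\w_{2\vartheta(i^*(M^*))-1} o} < K$; and (c) the Criterion (A) condition $(z_{m-1}, \w_{2\vartheta(m)-2} a_m^t o)_{\w_{2\vartheta(m)-2} o} < K$. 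I plan to feed (a) into Corollary~\ref{cor:induction} to conclude that the composite path from $\w_{2\vartheta(l)-1} o$ to $z_{m-1}$ tracks a genuine geodesic and that $z_{m-1}$ lies essentially on the geodesic joining $\w_{2\vartheta(l)-1} o$ to $\w_{2\vartheta(m)-2} o$. Combining with (b) and (c) via the Gromov inequality, I transfer the two $<K$ bounds from $z_{m-1}$ to the corresponding bounds at $\w_{2\vartheta(l)-1} o$, so that Fact~\ref{lem:1segment} with $C = K$ once more finishes the job.

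The main obstacle is this transfer in the harder sub-case: Corollary~\ref{cor:induction} naively produces Gromov products of size $F$ at the marked points, whereas we need the final bounds to descend to $K$. The resolution exploits the Schottky length hypothesis $K' > 2L(K,\epsilon) + 5000 F(K,\epsilon)$: each Schottky segment is so long compared to $F$ that $(\w_{2\vartheta(l)-1}o, z_{m-1})_{\w_{2\vartheta(m)-2}o}$ exceeds $K + \delta$ by a wide margin, which makes the unfavourable branch of the Gromov inequality impossible and lets the $K$-level bound pass through intact. The analogous argument at $\w_{2\vartheta(l)}o$ uses the $F$-witnessing of $[\w_{2\vartheta(l)-1}o, z_{m-1}]$ by the first Schottky segment $\gamma_1^* = [\w_{2\vartheta(l)-1}o, \w_{2\vartheta(l)}o]$ supplied by Corollary~\ref{cor:induction}.
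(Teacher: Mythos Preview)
Your deduction that $M'=2$ is forced by the literal hypothesis ``$\{i(1)<\cdots<i(M')\}\subseteq P_k$'' is correct, but this exposes a typo in the statement rather than a route to a proof. In the paper's own argument for the case $l<m-1$, the intermediate indices $i(2),\ldots,i(M)$ come from the Criterion~(B) sequence at stage $m-1$ and lie in $P_{m-2}$; since each exceeds $l=\max P_{m-1}$ they are \emph{not} in $P_{m-1}=P_k\cap\{1,\ldots,m-1\}$. The paper then sets $i(M+1):=m$ and obtains $M'=M+1\ge 3$. Nothing downstream uses membership of the $i(j)$ in $P_k$, so the intended statement should simply drop that containment.

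Your attempt to prove the stronger $M'=2$ conclusion has a genuine gap in the second sub-case. The ``transfer via the Gromov inequality'' fails on two counts. First, $X$ may be $\mathcal{T}(\Sigma)$, where no global inequality of the form~\eqref{eqn:Gromov} is available; the whole pivoting section is written so as never to invoke it directly. Second, even when $X$ is $\delta$-hyperbolic, the transfer you describe yields at best $(\,\cdot\,,\,\cdot\,)_{\w_{2\vartheta(m)-2}o}<K+\delta$, so Fact~\ref{lem:1segment} would only return $D(K+\delta,\epsilon)$-witnessing, not $D(K,\epsilon)$. There is also a missing input: to run the Gromov inequality at the basepoint $\w_{2\vartheta(m)-2}o$ you need $(\w_{2\vartheta(l)-1}o,\,z_{m-1})_{\w_{2\vartheta(m)-2}o}$ to be large, but Corollary~\ref{cor:induction} applied to the marking of $[\w_{2\vartheta(l)-1}o, z_{m-1}]$ only controls Gromov products at the marked points $x_i,y_i,z_i$, and $\w_{2\vartheta(m)-2}o$ is not among them. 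The same objection applies to your proposed bound at $\w_{2\vartheta(l)}o$: Criterion~(A) at stage $l$ gives $(\w_{2\vartheta(l)-1}o,\w_{2\vartheta(l+1)-2}o)_{\w_{2\vartheta(l)}o}<K$, which involves $\w_{2\vartheta(l+1)-2}o$, not $\w_{2\vartheta(m)-2}o$, when $l<m-1$.

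The paper avoids all of this by refusing to collapse to two segments. It keeps the full Criterion~(B) sequence $l=i(1)<\cdots<i(M)$, and appends $i(M+1):=m$. The only new piece to verify is that $[\w_{2\vartheta(i(M))-2}a_{i(M)}o,\,\w_{2\vartheta(m)-2}a_m^{t}o]$ is $D(K,\epsilon)$-witnessed by $(\gamma_M,\eta_{M+1})$, and this follows from Fact~\ref{lem:1segment} using the two $<K$ bounds (your (b) and (c)) \emph{at their native basepoints}, with no transfer needed. The hinge condition $(y_M,z_M)_{x_M}<D$ at $\w_{2\vartheta(i(M))-2}a_{i(M)}o$ is supplied directly by Property~(7) of Schottky sets. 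The extended marking is then just the definition (equivalently Lemma~\ref{lem:markedConcat}), with no loss of constants.
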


\begin{proof}
$l, m \in P_{k}$ implies that $l \in P_{l}$ and $l, m \in P_{m}$. In particular, $l$ ($m$, resp.) is newly chosen at stage $l$ ($m$, resp.) by fulfilling Criterion (A). Hence, $(\w_{2\vartheta(l) - 1} o, \w_{2\vartheta(l+1) - 2} o)_{\w_{2\vartheta(l)} o} < K$ and $z_{l} = \w_{2\vartheta(l) - 1} o$. Moreover, $P_{m} = P_{m-1} \cup \{m\}$ and $l = \max P_{m-1}$.

If $l = m-1$, then $m$ is newly chosen at stage $m = l+1$. In this case we have \[(\w_{2\vartheta(l) - 1} o, \w_{2\vartheta(m) - 2} a_{m}^{t} o)_{\w_{2\vartheta(m) - 2} o} = (z_{l}, \w_{2\vartheta(m) - 2} a_{m}^{t} o)_{\w_{2\vartheta(m) - 2} o} < K
\] from Criterion (A) for $m$. Then Fact \ref{lem:1segment} implies that $[\w_{2\vartheta(l) - 1} o, \w_{2\vartheta(m) - 2}a_{m}^{t} o]$ is fully $D(K, \epsilon)$-marked with \[
[\w_{2\vartheta(l) - 1} o, \w_{2\vartheta(l)} o], \,\,[\w_{2\vartheta(m) - 2} o, \w_{2\vartheta(m) - 2} a_{m}^{t} o].
\]Hence, $\{l = i(1) < i(2) = m\}$ works.

If $l < m-1$, then $l=\max P_{m-1}$ has survived at stage $m-1$ by fulfilling Criterion (B). This means that there exist $l= i(1) < \ldots < i(M)$ in $P_{m-2}$  with $M  \ge 2$ such that $[\w_{2\vartheta(l) - 1} o, \w_{2\vartheta(i(M)) - 2}a_{i(M)} o]$ is fully $D(K, \epsilon)$-marked with $(\gamma_{j})_{j=1}^{M-1}$, $(\eta_{j})_{j=2}^{M}$. Here, $\gamma_{j}$, $\eta_{j}$'s are as in Equation \ref{eqn:intermediate}. Moreover, we have\begin{equation}\label{eqn:intermediateCase2}
(\w_{2\vartheta(i(M)) - 2} a_{i(M)} o, \w_{2\vartheta(m) - 2} o)_{\w_{2\vartheta(i(M)) - 1} o} < K
\end{equation}
and  $z_{m-1} = \w_{2\vartheta(i(M)) - 2} a_{i(M)} o$. 

We now claim that $i(1) < \ldots < i(M)$ and $i(M+1) := m$ together serve as the desired sequence (hence $M' = M+1 \ge 3$). First, since $m$ is newly chosen based on Criterion (A), we have \begin{equation}\label{eqn:intermediateCase2.2}(\w_{2\vartheta(i(M)) - 2} a_{i(M)} o, \w_{2\vartheta(m) - 2} a_{m}^{t})_{\w_{2\vartheta(m) - 2} o} = (z_{m-1}, \w_{2\vartheta(m) - 2} a_{m}^{t})_{\w_{2\vartheta(m) - 2} o} < K.
\end{equation}
Now combining Inequality \ref{eqn:intermediateCase2} and \ref{eqn:intermediateCase2.2}, Fact \ref{lem:1segment} implies that  $[\w_{2\vartheta(i(M)) - 2} a_{i(M)} o, \w_{2\vartheta(m) - 2} a_{m}^{t} o]$ is $D(K, \epsilon)$-witnessed by \[
( [\w_{2\vartheta(i(M)) - 2} a_{i(M)} o, \w_{2\vartheta(i(M)) - 1} o], [\w_{2\vartheta(m) - 2} o, \w_{2\vartheta(m) - 2} a_{m}^{t} o]) = (\gamma_{M}, \eta_{M}).
\] Finally, we have\[
(\w_{2\vartheta(i(M)) - 1} o, \w_{2\vartheta(i(M)) - 2} o)_{\w_{2\vartheta(i(M)) - 2} a_{i(M)} o} = (a_{i(M)} o, a_{i(M)}^{-1} o)_{o} <K<D(K, \epsilon)
\] thanks to Property (7) of the $(K, K', \epsilon)$-Schottky set $S_{1}$. Combining these implies that $[\w_{2\vartheta(l) - 1} o, \w_{2\vartheta(m) - 2} a_{m}^{t} o]$ is fully $D(K, \epsilon)$-marked with $(\gamma_{j})_{j=1}^{M}$, $(\eta_{j})_{j=2}^{M+1}$ that are as in Equation \ref{eqn:intermediate}.
\end{proof}

\begin{lem}\label{lem:extremal}
The following holds for any $n > 0$ and $\mathbf{g} \in G^{n}$. 
Let \[
\Theta(\mathbf{g}) = \{\vartheta(1) < \ldots < \vartheta(N)\} \quad \big(N = \#\Theta(\mathbf{g})\big)
\]
and \[
P_{N}(\mathbf{g})= \{\iota(1) < \ldots < \iota(m)\} \quad \big(m = \# P_{N}(\mathbf{g})\big).
\] Then there exist $M \ge m$, Schottky segments $(\gamma_{l})_{l=1}^{M}$, $(\eta_{l})_{l=1}^{M}$ and $1 \le l(1) < \ldots < l(m) \le M$ such that: \begin{enumerate}
\item  $[o, \w_{n} o]$ is $D(K, \epsilon)$-marked with $(\gamma_{l})_{l}$, $(\eta_{l})_{l}$, and 
\item $\gamma_{l(t)} = [\w_{2\vartheta(\iota(t)) - 1} o, \w_{2\vartheta(\iota(t))} o]$ and $\eta_{l(t)} = [\w_{2\vartheta(\iota(t)) - 2} o, \w_{2 \vartheta(\iota(t)) -1} o]$.
\end{enumerate}
\end{lem}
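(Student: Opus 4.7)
The plan is to realise $[o,\w_n o]$ as a $D(K,\epsilon)$-marked segment in three movements: assemble the pivotal skeleton on $[\w_{2\vartheta(\iota(1))-1}o,\w_{2\vartheta(\iota(m))-1}o]$ via Lemma~\ref{lem:intermediate} and Lemma~\ref{lem:markedConcat}, then extend by one witnessing on each side to reach $o$ and $\w_n o$.

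For the skeleton, I apply Lemma~\ref{lem:intermediate} at stage $N$ to each consecutive pair $\iota(t)<\iota(t+1)$, taking the parameter in the lemma equal to $2$. This produces a fully $D$-marked $[\w_{2\vartheta(\iota(t))-1}o,\w_{2\vartheta(\iota(t+1))-1}o]$ whose first gamma and last eta are precisely the pivotal Schottky segments attached to $\iota(t)$ and $\iota(t+1)$. I then glue the $m-1$ pieces using Lemma~\ref{lem:markedConcat}; at each internal pivot the gluing condition reduces, after translating to $o$, to $(b_{\iota(t)}^2 o,a_{\iota(t)}^{-2}o)_o<K$. Since Criterion~(A) forces $a_{\iota(t)}\ne b_{\iota(t)}^{-1}$, a short four-case check on the signs of $a_{\iota(t)},b_{\iota(t)}\in S_0$ yields this bound from Schottky Property~(7) when the exponents are opposite, and from Property~(5) or~(6) (applied at $b_{\iota(t)}^{\pm2}o$, using that $s_1\ne s_2$ in $S_1$) when they agree.

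For the left boundary, I first show $P_{\iota(1)-1}=\emptyset$: if some $j<\iota(1)$ lay in $P_{\iota(1)-1}$, it would survive into $P_{\iota(1)}$ and could only exit later via Criterion~(B) with cutoff $i(1)<j<\iota(1)$, which would simultaneously eject $\iota(1)$ and contradict $\iota(1)\in P_N$ (once ejected, $\iota(1)$ cannot be re-added). Hence $z_{\iota(1)-1}=o$, so Criterion~(A) at stage $\iota(1)$ supplies $(o,\w_{2\vartheta(\iota(1))-1}o)_{\w_{2\vartheta(\iota(1))-2}o}<K$, the only non-trivial condition in witnessing $[o,\w_{2\vartheta(\iota(1))-1}o]$ by $\eta_{l(1)}$. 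For the right boundary, if $\iota(m)=N$ then Criterion~(A) at stage $N$ directly gives $(\w_{2\vartheta(\iota(m))-1}o,\w_n o)_{\w_{2\vartheta(\iota(m))}o}<K$ via the convention $\w_{2\vartheta(N+1)-2}o=\w_n o$, and $\gamma_{l(m)}$ furnishes the terminal witnessing. If instead $\iota(m)<N$, stage $N$ must have executed Criterion~(B) with $i(1)=\iota(m)$ (the only way to keep the maximum), producing a sequence $\iota(m)=i(1)<\cdots<i(M_N)$ in $P_{N-1}$ that fully $D$-marks $[\w_{2\vartheta(\iota(m))-1}o,\w_{2\vartheta(i(M_N))-2}a_{i(M_N)}o]$ and obeys $(\w_{2\vartheta(i(M_N))-2}a_{i(M_N)}o,\w_n o)_{\w_{2\vartheta(i(M_N))-1}o}<K$. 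I splice this extra structure onto the skeleton at $\w_{2\vartheta(\iota(m))-1}o$, append the Schottky segment $[\w_{2\vartheta(i(M_N))-2}a_{i(M_N)}o,\w_{2\vartheta(i(M_N))-1}o]$ as the terminal gamma (its gluing condition $(a_{i(M_N)}o,a_{i(M_N)}^{-1}o)_o<K$ is Property~(7)), and use the tail bound for the terminal witnessing.

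The delicate point is the book-keeping at the right boundary when $\iota(m)<N$: one must verify that the non-pivotal markers introduced by the final Criterion~(B) slot cleanly into the pivotal skeleton and that the appended final gamma meets both its gluing condition and the terminal witnessing. All the required Gromov-product estimates come from Criterion~(A)/(B) or Schottky Properties~(5)--(7), so once the indexing is straight the assembled structure realises $[o,\w_n o]$ as $D(K,\epsilon)$-marked with $M\ge m$ Schottky segments, placing the pivotal indices $l(1)<\cdots<l(m)$ at $\w_{2\vartheta(\iota(t))-1}o$ as required.
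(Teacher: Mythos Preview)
Your proof is correct and follows essentially the same approach as the paper's: assemble the skeleton on consecutive pivotal pairs via Lemma~\ref{lem:intermediate} (with $t=2$) and Lemma~\ref{lem:markedConcat}, verify the gluing condition $(a_{\iota(t)}^{-2}o,b_{\iota(t)}^{2}o)_{o}<K$ at each internal pivot using $a_{\iota(t)}\ne b_{\iota(t)}^{-1}$ and Schottky properties, then handle the two boundaries exactly as you do (with the same case split $\iota(m)=N$ versus $\iota(m)<N$ on the right). Your justifications for $P_{\iota(1)-1}=\emptyset$ and for $i(1)=\iota(m)$ in the Criterion~(B) case are slightly more explicit than the paper's, but the argument is the same.
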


\begin{proof}
We will apply Lemma \ref{lem:markedConcat}. First recall Lemma \ref{lem:intermediate}: for each $t = 2, \ldots, m$, $[\w_{2\vartheta(\iota(t-1)) - 1} o, \w_{2\vartheta(\iota(t)) - 1} o]$ is fully $D(K, \epsilon)$-marked with some Schottky sequences $(\gamma_{l; t})_{l}$, $(\eta_{l;t})_{l}$'s, whose forms are given by Equation \ref{eqn:intermediate}. Here, note that the length of these sequences need not be 1; this leads to the possibility that $l(t) - l(t-1) > 1$.

Given the above result, it suffices to prove the following items:\begin{enumerate}
\item $[o, \w_{2\vartheta(\iota(1)) - 1} o]$ is fully $D(K, \epsilon)$-marked with $[o, o]$, $[\w_{2\vartheta(\iota(1)) - 2} o, \w_{2\vartheta(\iota(1)) - 1} o]$;
\item $[\w_{2\vartheta(\iota(m)) - 1} o, \w_{n} o]$ is fully $D(K, \epsilon)$-marked with some sequences $(\gamma'_{j})_{j=1}^{M'}$, $(\eta'_{j})_{j=2}^{M'+1}$ of Schottky segments, where $\gamma'_{1} = [
\w_{2\vartheta(\iota(m)) - 1} o, \w_{2\vartheta(\iota(m))} o]$.
\item $(\w_{2\vartheta(\iota(t)) - 2} o, \w_{2\vartheta(\iota(t))} o)_{\w_{2\vartheta(\iota(t)) - 1} o} < D(K, \epsilon)$ for each $t=1, \ldots, m$.
\end{enumerate}

First, $\iota(1) = \min P_{N}$ implies that $P_{\iota(1) - 1} = \emptyset$, $z_{\iota(1) - 1} = o$ and that $\iota(1)$ is newly chosen at stage $\iota(1)$. Hence, we have \[
(o, \w_{2\vartheta(\iota(1)) - 1} o)_{\w_{2\vartheta(\iota(1)) - 2} o} = (z_{\iota(1)-1}, \w_{2\vartheta(\iota(1)) - 1} o)_{\w_{2\vartheta(\iota(1)) - 2} o} < K
\] and Fact \ref{lem:1segment} implies the first item.

Next, we observe how $\iota(m)$ survived in $P_{N}$. If $\iota(m) = N$, then it was newly chosen at stage $N$; $(\w_{2\vartheta(\iota(m)) - 1}o, \w_{n} o)_{\w_{2\vartheta(\iota(m))} o} < K$ holds and Fact \ref{lem:1segment} implies that $[\w_{2\vartheta(\iota(m)) - 1} o, \w_{n} o]$ is fully $D(K, \epsilon)$-marked with \[
[w_{2\vartheta(\iota(m)) - 1} o, w_{2\vartheta(\iota(m))} o], \quad [\w_{n} o, \w_{n} o].
\]
If $\iota(m) \neq N$, then it has survived at stage $N$ by fulfilling Criterion (B). Thus, there exist $i \in P_{N-1}$ such that $(\w_{2\vartheta(i) - 2}a_{i} o, \w_{n} o)_{\w_{2\vartheta(i) - 1} o } < K$ and Schottky segments $(\gamma_{j}')_{j=1}^{M'-1}$, $(\eta_{j}')_{j=2}^{M'}$ such that $[\w_{2\vartheta(\iota(m)) - 1} o, \w_{2\vartheta(i) - 2} a_{i} o]$ is fully $D(K, \epsilon)$-marked with $(\gamma_{j}')$, $(\eta_{j}')$, where \[\gamma_{1}' = [\w_{2\vartheta(\iota(m)) - 1} o, \w_{2\vartheta(\iota(m))} o], \quad \eta_{M'}' = [\w_{2\vartheta(i) - 2} o, \w_{2\vartheta(i) - 2} a_{i} o].
\] Futhermore, the second item of Criterion (B) and Fact \ref{lem:1segment} imply that $[\w_{2\vartheta(i) - 2} a_{i} o, \w_{n} o]$ is $D(K, \epsilon)$-witnessed by \[
([\w_{2\vartheta(i) - 2} a_{i} o, \w_{2\vartheta(i) - 1} o], [\w_{n}o, \w_{n}o]).
\] Finally, recall that  \[
(\w_{2\vartheta(i) - 2} o, \w_{2\vartheta(i) - 1} o)_{\w_{2\vartheta(i) - 2} a_{i} o} = (a_{i}^{-1}o, a_{i} o)_{o} < K<D(K, \epsilon)
\] by Property (7) of the $(K, K', \epsilon)$-Schottky set $S_{1}$. Combining these, we conclude that $[\w_{2\vartheta(\iota(m)) - 1} o, \w_{n} o]$ is fully $D(K, \epsilon)$-marked with \[
(\gamma_{1}', \ldots, \gamma_{M'-1}', [\w_{2\vartheta(i) - 2} a_{i} o, \w_{2\vartheta(i) - 1} o]), \quad (\eta_{2}', \ldots, \eta_{M'}', [\w_{n} o, \w_{n}o]).
\]
This settles the second item.

For the third item let $t \in \{1, \ldots, m\}$. Since $\iota(t) \in P_{N}(\mathbf{g})$, $\iota(t)$ was newly chosen at stage $\iota(t)$ by fulfilling Criterion (A); hence, $a_{\iota(t)}^{-1} \neq b_{\iota(t)}$ and we deduce $(\w_{2\vartheta(\iota(t)) - 2} o, \w_{2\vartheta(\iota(t))} o)_{\w_{2\vartheta(\iota(t)) - 1} o}= (a_{\iota(t)}^{-2} o, b_{\iota(t)}^{2} o)_{o} < K$ from Properties (5), (6), (7) of Schottky sets. 
\end{proof}

The same proof also yields the following lemma: 

\begin{lem}\label{lem:intermediate2}
Let $k<k'$ be elements of $P_{N}(\mathbf{g})$ and $t \in \{1, 2\}$. Then there exist some sequences $(\gamma_{l})_{l=1}^{M-1}$, $(\eta_{l})_{l=2}^{M}$ of Schottky segments such that: \begin{enumerate}
\item  $[\w_{2\vartheta(k) - 1} o, \w_{2\vartheta(k') - 2} a_{k'}^{t} o]$ is fully $D(K, \epsilon)$-marked with $(\gamma_{l})$, $(\eta_{l})$, and 
\item $\gamma_{1} = [\w_{2\vartheta(k) - 1} o, \w_{2\vartheta(k)} o]$ and $\eta_{M} = [\w_{2\vartheta(k') - 2} o, \w_{2\vartheta(k') - 2} a_{k'}^{t} o]$.
\end{enumerate}
\end{lem}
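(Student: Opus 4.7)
The plan is to deduce Lemma \ref{lem:intermediate2} from Lemma \ref{lem:intermediate} by concatenation along the elements of $P_N(\mathbf{g})$ lying between $k$ and $k'$. First, I would enumerate
\[
P_N(\mathbf{g}) \cap \{k, k+1, \ldots, k'\} = \{k = j(0) < j(1) < \cdots < j(r) = k'\},
\]
so that $(j(s), j(s+1))$ is a pair of consecutive elements in $P_N(\mathbf{g})$ for each $s = 0, \ldots, r-1$. Thus Lemma \ref{lem:intermediate}, applied with the stage parameter equal to $N$, is directly available for each such pair.

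Next, I would apply Lemma \ref{lem:intermediate} exactly $r$ times: for $s = 0, \ldots, r-2$ with the exit parameter $t = 2$, producing a full $D(K, \epsilon)$-marking of $[\w_{2\vartheta(j(s))-1} o,\, \w_{2\vartheta(j(s+1))-1} o]$ by Schottky segments whose first $\gamma$-segment is $[\w_{2\vartheta(j(s))-1}o, \w_{2\vartheta(j(s))}o]$ and whose last $\eta$-segment is $[\w_{2\vartheta(j(s+1))-2}o, \w_{2\vartheta(j(s+1))-1}o]$; and for $s = r-1$ with the prescribed $t$, producing a marking of $[\w_{2\vartheta(j(r-1))-1} o,\, \w_{2\vartheta(k')-2} a_{k'}^{t} o]$ of the same shape, whose last $\eta$-segment is $[\w_{2\vartheta(k')-2}o, \w_{2\vartheta(k')-2} a_{k'}^{t} o]$.

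I would then glue these $r$ markings via Lemma \ref{lem:markedConcat} applied at the joining points $x_s = \w_{2\vartheta(j(s))-1} o$, $s = 1, \ldots, r-1$. The hypothesis of that lemma reads $(y_s, z_s)_{x_s} \le D(K, \epsilon)$, where $y_s = \w_{2\vartheta(j(s))} o$ (free endpoint of the first $\gamma$-segment of the next piece) and $z_s = \w_{2\vartheta(j(s))-2} o$ (free endpoint of the last $\eta$-segment of the previous piece). By $\Isom(X)$-invariance this Gromov product equals $(a_{j(s)}^{-2}o,\, b_{j(s)}^{2}o)_o$; since $j(s) \in P_N(\mathbf{g})$ was newly chosen by Criterion (A), we have $a_{j(s)} \neq b_{j(s)}^{-1}$, and Properties (5), (6), (7) of the Schottky set $S_{1}$ then bound this by $K < D(K, \epsilon)$, exactly as in the third-item computation in the proof of Lemma \ref{lem:extremal}. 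The concatenated marking therefore realises $\gamma_1 = [\w_{2\vartheta(k)-1} o, \w_{2\vartheta(k)} o]$ and $\eta_M = [\w_{2\vartheta(k')-2} o, \w_{2\vartheta(k')-2} a_{k'}^{t} o]$, as required.

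I do not foresee a real obstacle: all the genuine geometric content is already packaged inside Lemmas \ref{lem:intermediate} and \ref{lem:markedConcat}. The only bookkeeping is matching endpoints at the joining points and verifying the Schottky Gromov-product bound, both of which mirror the argument already carried out in Lemma \ref{lem:extremal}. If anything unexpected surfaces, it will be in confirming that the flexibility $t \in \{1, 2\}$ in Lemma \ref{lem:intermediate} is compatible with chaining via $t = 2$ on intermediate segments, but this is transparent from the endpoint identity $\w_{2\vartheta(j(s+1)) - 2} a_{j(s+1)}^{2} o = \w_{2\vartheta(j(s+1)) - 1} o$.
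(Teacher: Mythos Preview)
Your proposal is correct and is exactly the approach the paper intends: the paper simply states that ``the same proof'' as that of Lemma~\ref{lem:extremal} yields Lemma~\ref{lem:intermediate2}, and that proof is precisely the concatenation of the markings from Lemma~\ref{lem:intermediate} along consecutive elements of $P_N(\mathbf{g})$ via Lemma~\ref{lem:markedConcat}, with the Gromov-product check $(a_{j(s)}^{-2}o, b_{j(s)}^{2}o)_o < K$ at the joining points.
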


Having established the properties of $P_{N}(\mathbf{g})$, our next goal is to estimate the size of $P_{N}(\mathbf{g})$. We first fix $N$, $\Theta = \{\vartheta(1) < \ldots < \vartheta(N)\}$ and the choices $(g_{2j-1}, g_{2j}) \notin \left(S_{1}^{(2)} \cup S_{1}^{(-2)} \right)^{2}$ for $j \notin \Theta$. Conditioned on these choices, we draw $(g_{2\theta(1) - 1}, g_{2\theta(1)}, \ldots, g_{2\theta(N) - 1}, g_{2\theta(N)})$ from $ \left(S_{1}^{(2)} \cup S_{1}^{(-2)} \right)^{2N}$ with the product measure of the uniform measure on $S_{1} \cup S_{1}^{-1}$. In particular, we regard $\mathbf{g}$, $\w_{j}$'s and $P_{k}$ as RVs of $s = (a_{1}, b_{1}, \ldots, a_{N}, b_{N})$; here, $a_{i}, b_{i}$ are independently drawn from $S_{0}$ with the uniform measure.

We will often modify the given choice $s$; the modified choices will be denoted by $\tilde{s} = (\tilde{a}_{1}, \ldots, \tilde{b}_{N})$ or $\bar{s} = (\bar{a}_{1}, \ldots, \bar{b}_{N})$. We will then denote by $\tilde{\w}_{j}$ or $\bar{\w}_{j}$ the sample path arising from the modified choices, respectively.

\begin{lem}\label{lem:0thCasePivot}
For $0 < k \le N$ and partial choices $s \in S_{0}^{2(k-1)}$, we have \[
\Prob\Big( \# P_{k}(s, a_{k}, b_{k}) = \# P_{k-1}(s) + 1\Big)\ge 9/10.
\]
\end{lem}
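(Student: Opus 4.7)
The plan is to observe that the event $\#P_{k}(s,a_{k},b_{k}) = \#P_{k-1}(s)+1$ coincides exactly with Criterion (A) firing at step $k$, and then to bound the probability of failure by a union bound driven by Schottky properties (5), (6), (7). Indeed, whenever Criterion (B) is invoked we obtain either $P_{k} = P_{k-1} \cap \{1,\ldots,i(1)\}$ with $i(1) \le \max P_{k-1}$, or $P_{k} = \emptyset$; in both cases $\#P_{k} \le \#P_{k-1}$. So the only way to grow $P_{k-1}$ by exactly one is via Criterion (A).

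Next I would isolate the randomness. The point $w_{k}\,o$ depends only on the fixed non-Schottky data, while $z_{k-1}$ depends only on $s$; neither depends on $a_{k}$ or $b_{k}$. Setting $x_{1} := \w_{2\vartheta(k)-2}^{-1}z_{k-1}$ and $x_{2} := w_{k}\,o$, the equivalent form (\ref{eqn:varCondA}) of Criterion (A) becomes the requirement that $(x_{1},a_{k}^{t}o)_{o} < K$ for $t=1,2$, that $(b_{k}^{-2}o, x_{2})_{o} < K$, and that $a_{k} \ne b_{k}^{-1}$.

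The key step is counting bad draws. For $a_{k}$: if $a_{k} = u \in S_{1}$, then the $a_{k}^{t}$ are positive powers of $u$, so Property (5) applied at $x_{1}$ forbids at most one $u \in S_{1}$; if $a_{k} = u^{-1} \in S_{1}^{-1}$, then the $a_{k}^{t}$ are negative powers of $u$, so Property (6) at $x_{1}$ forbids at most one $u^{-1} \in S_{1}^{-1}$. Hence at most $2$ of the $\#S_{0}$ possible values of $a_{k}$ are bad. Symmetrically, $b_{k}^{-2}$ is a negative power of $b_{k}$ when $b_{k} \in S_{1}$ (use Property (6) at $x_{2}$) and a positive power when $b_{k} \in S_{1}^{-1}$ (use Property (5) at $x_{2}$), so at most $2$ values of $b_{k}$ are bad.

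Since $a_{k},b_{k}$ are independent uniform on $S_{0}$, a union bound gives
\[
\Prob(\text{Criterion (A) fails}) \le \frac{2}{\#S_{0}} + \frac{2}{\#S_{0}} + \frac{1}{\#S_{0}} = \frac{5}{\#S_{0}},
\]
where the last term absorbs the pairing constraint $a_{k} = b_{k}^{-1}$. By Corollary \ref{cor:almostInj} together with the fact that $S$ is nicely populated by $S_{1}^{(2)} \cup S_{1}^{(-2)}$, one has $\#S_{0} = \#(S_{1}^{(2)} \cup S_{1}^{(-2)}) \ge 0.99\#S + 400 \ge 400$, so the failure probability is at most $5/400 < 1/10$, giving the claim. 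The only real bookkeeping is to split $a_{k}, b_{k}$ into their $S_{1}$ and $S_{1}^{-1}$ parts and apply the correct Schottky property to each; everything else is a routine union bound.
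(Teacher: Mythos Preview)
Your proposal is correct and follows essentially the same approach as the paper: identify the event with Criterion (A), freeze the data determined by $s$ and the fixed words, and count bad draws of $a_{k}$ and $b_{k}$ using Schottky properties (5) and (6). The only cosmetic difference is that the paper conditions on $b_{k}$ first and multiplies the two good-fraction bounds $\frac{2\#S_{1}-2}{2\#S_{1}}\cdot\frac{2\#S_{1}-3}{2\#S_{1}}\ge 0.9$, whereas you use a straight union bound $5/\#S_{0}\le 5/400<1/10$; both arrive at the same conclusion from the same counting.
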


\begin{proof}
Recall Criterion (A) for $\#P_{k} = \# P_{k-1} + 1$. Note that the condition \begin{equation}\label{eqn:pivotCondition1}
(\w_{2\vartheta(k) - 1} o, \w_{2\vartheta(k+1) - 2} o)_{\w_{2\vartheta(k)} o} = (b_{k}^{-2} o, w_{k}o)_{o} < K
\end{equation} 
depends only on $b_{k}$ and not on other $a_{i}$'s and $b_{i}$'s. This holds for at least $(\#S_{1} - 1)$ choices of $b_{k}$ in $S_{1}$ and $(\#S_{1} - 1)$ choices in $S_{1}^{(-1)}$.

Let us now fix a choice $b_{k} \in S_{1}^{(\pm)}$ satisfying Condition \ref{eqn:pivotCondition1} and $a_{1}, b_{1}, \ldots, a_{k-1}, b_{k-1}$ that determine $\w_{2\vartheta(k) - 2}$ and $z_{k-1}$. Then the remaining conditions \begin{equation}\label{eqn:pivotCondition2}
\begin{aligned}
(z_{k-1}, \w_{2\vartheta(k) - 2} a_{k}o)_{\w_{2\vartheta(k) - 2} o} &= (\w_{2\vartheta(k) - 2}^{-1}z_{k-1}, a_{k}o)_{o}<K, \\
(z_{k-1}, \w_{2\vartheta(k) - 2} a_{k}^{2}o)_{\w_{2\vartheta(k) - 2} o} &= (\w_{2\vartheta(k) - 2}^{-1}z_{k-1}, a_{k}^{2}o)_{o}<K, \\ 
a_{k} &\neq b_{k}^{-1}
\end{aligned}
\end{equation} 
hold for at least $(\#S_{1} - 1)$ choices of $a_{k}$ in $S_{1}^{(\pm 1)}$ and $(\#S_{1} - 2)$ choices in $S_{1}^{(\mp 1)}$, due to Property (5), (6) and (7) of Schottky sets. Since Condition \ref{eqn:pivotCondition1} and Condition \ref{eqn:pivotCondition2} together constitute Criterion (A), we obtain \[
\Prob\left( \# P_{k} = \# P_{k-1}+1 \right) \ge \frac{2\#S_{1} - 2}{2\#S_{1}} \cdot  \frac{2\#S_{1} - 3}{2\#S_{1}} \ge 0.9. \qedhere
\]
\end{proof}

Given $a_{1}, b_{1}, \ldots, a_{k-1}, b_{k-1}$ and $b_{k}$, we define the set $\tilde{S}'_{k}$ of elements $a_{k}$ in $S_{0}$ that satisfy Condition \ref{eqn:pivotCondition2}. In the proof above, we have observed that  $ \# \left[ S_{0} \setminus \tilde{S}'_{k} \right]\le 3$.

\begin{lem} \label{lem:pivotEquiv}
Let $i \in P_{k}(s)$ for a choice $s = (a_{1}, b_{1}, \cdots, a_{N}, b_{N})$, and $\bar{s}$ be obtained from $s$ by replacing $a_{i}$ with $\bar{a}_{i} \in \tilde{S}'_{i}(a_{1}, b_{1}, \cdots, a_{i-1}, b_{i-1}, b_{i})$. Then $P_{l}(s) = P_{l}(\bar{s})$ and $\tilde{S}'_{l}(s) = \tilde{S}'_{l}(\bar{s})$ for any $1 \le l \le k$.
\end{lem}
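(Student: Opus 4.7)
The plan is to induct on $l$ from $0$ up to $k$, carrying along the invariant
\[
P_l(s)=P_l(\bar s),\qquad \tilde S'_l(s)=\tilde S'_l(\bar s),\qquad z_l(\bar s)=\begin{cases}z_l(s)&\text{if } l<i,\\ T\cdot z_l(s)&\text{if } l\ge i,\end{cases}
\]
where $T:=\w_{2\vartheta(i)-2}\bar a_i^{2}a_i^{-2}\w_{2\vartheta(i)-2}^{-1}$. The sole computational input I need is that, by unpacking $\w_n=w_0 a_1^2 b_1^2 w_1\cdots a_N^2 b_N^2 w_N$,
\[
\bar\w_j=\w_j \text{ for } j\le 2\vartheta(i)-2,\qquad \bar\w_j=T\cdot\w_j \text{ for } j\ge 2\vartheta(i)-1.
\]
Everything else in the definition of $P_l, \tilde S'_l, z_l$ is phrased in terms of Gromov products or $D$-markings, and these are isometry-invariant under $T$.

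For $l<i$ the invariant is automatic since $a_i$ does not enter the construction. For $l=i$, the hypothesis $i\in P_k$ forces $i\in P_i$, so Criterion (A) fires for $s$. The clauses involving $a_i^t o$ transplant to $\bar a_i$ precisely because $\bar a_i\in\tilde S'_i$; the clause $(\w_{2\vartheta(i)-1}o,\w_{2\vartheta(i+1)-2}o)_{\w_{2\vartheta(i)}o}<K$ simplifies by isometry to $(b_i^{-2}o,w_io)_o<K$ and therefore does not see the $a_i$-slot; the clause $\bar a_i\neq b_i^{-1}$ is baked into $\tilde S'_i$-membership. Hence $i\in P_i(\bar s)=P_i(s)$, and a direct computation gives $z_i(\bar s)=\w_{2\vartheta(i)-2}\bar a_i^2 o=T\cdot z_i(s)$.

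For the inductive step $l>i$, the $\tilde S'_l$-equality follows from
\[
\bar\w_{2\vartheta(l)-2}^{-1}\,z_{l-1}(\bar s)=\w_{2\vartheta(l)-2}^{-1}T^{-1}T\,z_{l-1}(s)=\w_{2\vartheta(l)-2}^{-1}z_{l-1}(s),
\]
so Condition \ref{eqn:varCondA} gives the same set of $a_l$ for both choices. Criterion (A) at stage $l$ involves only points with sample-path indices $\ge 2\vartheta(i)-1$, so $T$-invariance of Gromov products forces its truth value to coincide between $s$ and $\bar s$; when (A) succeeds, $z_l$ is visibly a $T$-translate of its $s$-version.

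The main obstacle is Criterion (B): a sub-sequence $\{i(1)<\cdots<i(M)\}\subseteq P_{l-1}$ beginning with $i(1)<i$ would straddle the pivot $i$, and only the portion of the configuration with sample-path index $\ge 2\vartheta(i)-1$ is $T$-translated, so validity need not transplant. I resolve this by observing that $i\in P_l(s)$, which holds because $i\in P_k$, forces the maximal $i(1)$ over valid sub-sequences for $s$ to be $\ge i$; otherwise we would have $i\notin P_{l-1}\cap\{1,\ldots,i(1)\}=P_l(s)$. For sub-sequences with $i(1)\ge i$, every point appearing in the marking data and the Gromov-product condition (ii) has sample-path index $\ge 2\vartheta(i)-1$, and $\bar a_{i(j)}=a_{i(j)}$ for every $i(j)>i$, so the entire marking/Gromov-product configuration is uniformly $T$-translated between $s$ and $\bar s$; isometry invariance of $D$-marking and of Gromov products then makes validity equivalent for the two choices. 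Hence the maximal $i(1)$, and subsequently the lexicographically maximal continuation $i(2),\ldots,i(M)$, coincide for $s$ and $\bar s$, giving $P_l(\bar s)=P_l(s)$; since $i(M)>i(1)\ge i$ is strictly larger than $i$, the output $z_l(\bar s)=\bar\w_{2\vartheta(i(M))-2}a_{i(M)}o$ equals $T\cdot z_l(s)$, closing the induction.
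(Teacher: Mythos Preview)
Your proof is correct and follows essentially the same route as the paper's: induction on $l$ carrying the invariant $P_l(s)=P_l(\bar s)$, $\tilde S'_l(s)=\tilde S'_l(\bar s)$, and $z_l(\bar s)=T\cdot z_l(s)$ for $l\ge i$, with the key observation in the Criterion~(B) case that $i\in P_l(s)$ (inherited from $i\in P_k(s)$) forces the maximal $i(1)$ to be $\ge i$, so that the entire marking configuration lies in the $T$-translated regime. The paper makes the same argument with the same isometry (called $g$ there) and the same inductive invariant, and likewise singles out the fact that $i(M)>i$ to ensure $z_l$ transforms correctly.
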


\begin{proof}
Since $a_{1}, b_{1}, \ldots, a_{i-1}, b_{i-1}$ are intact, $P_{l}(s) = P_{l}(\bar{s})$ and $\tilde{S}'_{l}(s) = \tilde{S}'_{l}(\bar{s})$ hold for $l=0, \ldots, i-1$. At stage $i$, $b_{i}$ satisfies Condition \ref{eqn:pivotCondition1} (since $i \in P_{k}(s)$) and $\bar{a}_{i}$ satisfies Condition \ref{eqn:pivotCondition2}; hence, $i \in P_{k}(\bar{s})$ and $P_{i}(s) = P_{i}(\bar{s})$. We also have $\tilde{S}_{i}'(s) = \tilde{S}_{i}'(\bar{s})$.
At this stage, however, we have \[
\bar{z}_{i} = \bar{\w}_{2\vartheta(i)-1} o = g \w_{2\vartheta(i) - 1} o =gz_{i} \]
where \[
g :=  \w_{2\vartheta(i) - 1}\bar{a}_{i}^{2} (\w_{2\vartheta(i) - 1} a_{i}^{2})^{-1}.
\] More generally, we have \begin{equation}
\bar{\w}_{j} = g\w_{j} \quad (j \ge 2\vartheta(i) - 1),
\end{equation} or in other words, \begin{equation}\label{eqn:pivotEquivEqn1}
\begin{aligned}
\bar{\w}_{2\vartheta(j) - 1} &= g\w_{2\vartheta(j) - 1} \quad (j \ge i), \\
\bar{\w}_{2\vartheta(j) - 2} &= g\w_{2\vartheta(j) - 2}\quad (j > i).
\end{aligned}
\end{equation}
Recall again that the intermediate words $w_{j}$'s in between Schottky steps are unchanged.

We now claim the following for $i < l \le k$: \begin{enumerate}
\item If $s$ fulfills Criterion (A) at stage $l$, then so does $\bar{s}$.
\item If not and $\{i(1) < \ldots < i(M)\} \subseteq P_{l-1}(s)$ is the maximal sequence for $s$ in Criterion (B) at stage $l$, then it is also the maximal one for $\bar{s}$ at stage $l$.
\item In both cases, we have $P_{l}(s) = P_{l}(\bar{s})$ and $\bar{z}_{l} = g z_{l}$.
\end{enumerate}
Assuming the third item for $l-1$: $P_{l-1}(s) = P_{l-1}(\bar{s})$ and $\bar{z}_{l-1} = g z_{l-1}$, let us test Inequality \ref{eqn:varCondA} for $\bar{\w}$ in Criterion (A). If $s$ fulfills Criterion (A) at stage $l$, then \[\big(\bar{\w}_{2\vartheta(l) - 2}^{-1} \bar{z}_{l-1}, a_{l}^{t} o\big)_{o} = \left( \w_{2\vartheta(l) - 2}^{-1} g^{-1} \cdot g z_{l-1}, \,\,a_{l}^{t} o\right)_{o}  < K
\]
for $t = 1, 2$ and $(b_{l}^{-2} o, w_{l} o )_{o} < K$. Hence we obtain the first item. In this case we also deduce \[\begin{aligned}
P_{l}(s) &= P_{l-1}(s) \cup \{l\} = P_{l-1}(\bar{s}) \cup \{l\} = P_{l}(\bar{s}), \\
\bar{z}_{l} &= \bar{\w}_{2\vartheta(l) - 1} o = g \w_{2\vartheta(l) - 1} o = gz_{l},
\end{aligned}
\] which constitute the third item for $l$.

Let us now check the second item. Due to Equality \ref{eqn:pivotEquivEqn1}, a sequence $\{i(1) < \ldots < i(M)\}$ in $P_{l-1}(s) \cap \{i, \ldots, l-1\} = P_{l-1}(\bar{s})\cap \{i, \ldots, l-1\}$ works for $s$ in Criterion (B) if and only if it works for $\bar{s}$. Furthermore, $i$ belongs to $P_{l}(s)$ since $i \in P_{k}(s)$ and $i<l \le k$; hence, such sequences exist and the maximal sequence is chosen among them. Therefore, the maximal sequence $\{i(1) < \ldots < i(M)\}$ (with cardinality $M \ge 2$) for $s$ is also maximal for $\bar{s}$. We then deduce \[\begin{aligned}
P_{l}(s) &= P_{l-1}(s) \cap \{1, \ldots, i(1)\} = P_{l-1}(\bar{s}) \cap \{1, \ldots, i(1)\} = P_{l}(\bar{s}), \\
\bar{z}_{l} &= \bar{\w}_{2\vartheta(i(M)) - 2} a_{i(M)} o  = g\w_{2\vartheta(i(M)) - 2} a_{i(M)} o = gz_{l},
\end{aligned}
\]which constitute the third item for $l$. Here we used the condition $M \ge 2$ and $i(M) > i$; beware that $\bar{\w}_{2\vartheta(i) - 2} \bar{a}_{i} o$ and $g\w_{2\vartheta(i) - 2} a_{i} o$ may differ.

Since we have the base case $\bar{z}_{i} = gz_{i}$, an induction shows that $P_{l}(s) = P_{l}(\bar{s})$ for each $i<l\le k$. Moreover, Equality \ref{eqn:pivotEquivEqn1} and $\bar{z}_{l-1} = gz_{l-1}$ imply that $\tilde{S}_{l}'(s) = \tilde{S}_{l}'(\bar{s})$.
\end{proof}

Given $1 \le k \le N$ and a partial choice $s = (a_{1}, b_{1}, \ldots, a_{k}, b_{k})$, we say that $\bar{s} = (\bar{a}_{1}, \bar{b}_{1}, \ldots, \bar{a}_{k}, \bar{b}_{k})$ is \emph{pivoted from $s$} if: \begin{enumerate}
\item $b_{j} = \bar{b}_{j}$ for all $1\le j \le k$, 
\item $\bar{a}_{i} \in \tilde{S}'_{i}(s)$ for $i \in P_{k}(s)$, and 
\item $a_{j} = \bar{a}_{j}$ for all other $j \notin P_{k}(s)$. 
\end{enumerate}
Lemma \ref{lem:pivotEquiv} then asserts that being pivoted from each other is an equivalence relation. For each $s \in S_{0}^{2k}$, let $\mathcal{E}_{k}(s)$ be the equivalence class of $s$.

\begin{lem} \label{lem:pivotCondition}
For $0 \le k < N$, $j \ge0$ and $s \in S_{0}^{2k}$, we have \[
\Prob\Big(\# P_{k+1}(\tilde{s}, a_{k+1}, b_{k+1}) < \# P_{k}(s) - j \, \Big| \, \tilde{s} \in \mathcal{E}_{k}(s), (a_{k+1}, b_{k+1}) \in S_{0}^{2}\Big) \le 1/10^{j+1}.
\]
\end{lem}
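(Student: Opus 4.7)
Write $P_k(s)=\{\iota(1)<\cdots<\iota(m)\}$; by Lemma \ref{lem:pivotEquiv}, this set is the same for every $\tilde{s}\in\mathcal{E}_k(s)$. If $j\ge m$ the event $\{|P_{k+1}|<m-j\}$ is empty, so assume $j<m$. Let $A_{k+1}$ denote the event that Criterion (A) is satisfied at stage $k+1$, and for each $l'\in\{1,\ldots,m\}$ let $C_{l'}$ denote the inequality
\[
(\tilde{\w}_{2\vartheta(\iota(l'))-2}\tilde{a}_{\iota(l')}o,\,\tilde{\w}_{2\vartheta(k+2)-2}o)_{\tilde{\w}_{2\vartheta(\iota(l'))-1}o}<K,
\]
which is precisely item (ii) in Criterion (B) for any candidate sequence ending at $i(M)=\iota(l')$. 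The plan is to show the containment
\[
\{|P_{k+1}|<m-j\}\ \subseteq\ A_{k+1}^{c}\cap\bigcap_{l'=m-j+1}^{m}C_{l'}^{c}
\]
and then estimate the right-hand side by successive conditioning on individual Schottky coordinates.

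For the containment I argue contrapositively. If $A_{k+1}$ holds, then $|P_{k+1}|=m+1$. If $A_{k+1}$ fails but $C_{l'}$ holds for some $l'\in[m-j+1,m]$, then Lemma \ref{lem:intermediate} (with $t=1$), applied to the consecutive elements $\iota(l'-1),\iota(l')$ of $P_k(\tilde{s})$, supplies item (i) of Criterion (B); so the pair $\{\iota(l'-1),\iota(l')\}$ is a valid candidate in Criterion (B), and the maximally selected $i(1)$ is at least $\iota(l'-1)\ge\iota(m-j)$, forcing $|P_{k+1}|\ge m-j$.

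For the estimate, conjugating by $\tilde{\w}_{2\vartheta(\iota(l'))-2}\tilde{a}_{\iota(l')}^{2}$ rewrites $C_{l'}$ as $(\tilde{a}_{\iota(l')}^{-1}o,\,H_{l'}o)_{o}<K$, where
\[
H_{l'}=\tilde{b}_{\iota(l')}^{2}\,w_{\iota(l')}\,\tilde{a}_{\iota(l'+1)}^{2}\tilde{b}_{\iota(l'+1)}^{2}w_{\iota(l'+1)}\cdots a_{k+1}^{2}b_{k+1}^{2}w_{k+1}
\]
is determined by $(a_{k+1},b_{k+1})$ together with the later pivotal choices $\tilde{a}_{\iota(l'+1)},\ldots,\tilde{a}_{\iota(m)}$. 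Once those are fixed, Properties (5) and (6) of the Schottky set $S_1$ forbid at most two values of $\tilde{a}_{\iota(l')}$ in $S_0$; since $\tilde{a}_{\iota(l')}$ is uniform on $\tilde{S}'_{\iota(l')}$ of size at least $2\#S_1-3$ and $\#S_1$ is large, the conditional probability of $C_{l'}^{c}$ is at most $1/10$. Iterating from $l'=m$ down to $l'=m-j+1$ yields $\Prob(\bigcap_{l'=m-j+1}^{m}C_{l'}^{c}\mid a_{k+1},b_{k+1})\le(1/10)^{j}$, uniformly in $(a_{k+1},b_{k+1})$. Finally, the Gromov products defining $A_{k+1}$ involve only $\tilde{\w}_{2\vartheta(k+1)-2}^{-1}\tilde{z}_{k}$ and $w_{k+1}$, both of which are pivoting-invariant by the translation identity $\bar{\w}_j=g\w_j$ of Lemma \ref{lem:pivotEquiv}; so the count in Lemma \ref{lem:0thCasePivot} gives $\Prob(A_{k+1}^{c})\le 1/10$, and averaging over $(a_{k+1},b_{k+1})$ produces the claimed bound $(1/10)^{j+1}$.

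The main obstacle is the containment: one must confirm that, once $A_{k+1}$ fails, the two-element sequences $\{\iota(l'-1),\iota(l')\}\subseteq P_k(\tilde{s})$ for $l'\in[m-j+1,m]$ exhaust the ways Criterion (B) can rescue $|P_{k+1}|$ to at least $m-j$, and that their only nontrivial requirement collapses to $C_{l'}$ because item (i) is automatic via Lemma \ref{lem:intermediate}. After that step the probability bound is bookkeeping, since each $C_{l'}$ isolates a single Schottky coordinate $\tilde{a}_{\iota(l')}$ against a fixed group element $H_{l'}$ that is controlled uniformly by Properties (5) and (6) of $S_1$.
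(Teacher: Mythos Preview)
Your proof is correct and follows the same underlying mechanism as the paper's: isolate, for each of the last $j$ pivotal indices $\iota(l')$, the single Schottky coordinate $\tilde a_{\iota(l')}$ and use Properties (5)--(6) to bound the failure of the Gromov-product condition $C_{l'}$ by $2/(\#\tilde S'_{\iota(l')})$, then combine with the $1/10$ bound on Criterion~(A) failing. The paper carries this out inductively in $j$---first proving the case $j=0$, then for $j=1$ partitioning $\mathcal{E}_k(s)$ into fibers $E(\tilde s,\tilde S'_m)$ that vary only $\tilde a_{\iota(m)}$, then for $j=2$ introducing a further layer $E_1(\tilde s,\tilde S'_l)$ varying $\tilde a_{\iota(m-1)}$, and so on---whereas you establish the containment $\{|P_{k+1}|<m-j\}\subseteq A_{k+1}^c\cap\bigcap_{l'}C_{l'}^c$ directly and then peel off the coordinates $\tilde a_{\iota(m)},\tilde a_{\iota(m-1)},\ldots$ in one sweep. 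Your observation that $A_{k+1}$ depends only on $(a_{k+1},b_{k+1})$ because $\tilde\w_{2\vartheta(k+1)-2}^{-1}\tilde z_k$ is pivoting-invariant is exactly what lets you factor out $\Prob(A_{k+1}^c)$ at the end; the paper achieves the same effect by noting that on the event $(a_{k+1},b_{k+1})\in\mathcal A$ the bad event is empty for every $\tilde s$. The two arguments are equivalent once unrolled, but yours avoids the auxiliary sets $\mathcal A_1$, $E_1(\tilde s,A)$, etc., at the cost of requiring the reader to verify that each $H_{l'}$ depends only on pivotal coordinates with index strictly larger than $\iota(l')$---which you state correctly.
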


\begin{proof}
Let us fix $s = (a_{1}, b_{1}, \ldots, a_{k}, b_{k})\in S_{0}^{2k}$ and \[
\mathcal{A} := \Big\{(a_{k+1}, b_{k+1}) \in S_{0}^{2}: \#P_{k+1}(s, a_{k+1}, b_{k+1}) = \#P_{k}(s)+1 \Big\}.
\] Then Lemma \ref{lem:0thCasePivot} implies that $\Prob(\mathcal{A} \,|\, S_{0}^{2}) \ge 0.9$. Moreover, for $(a_{k+1}, b_{k+1}) \in \mathcal{A}$ and $\tilde{s} \in \mathcal{E}_{k}(s)$, $(\tilde{s}, a_{k+1}, b_{k+1})$ is pivoted from $(s, a_{k+1}, b_{k+1})$ since $(\tilde{s}, a_{k+1}, b_{k+1})$ and $(s, a_{k+1}, b_{k+1})$ differ at slots in $P_{k}(s) \subseteq P_{k+1}(s, a_{k+1}, b_{k+1})$. Lemma \ref{lem:pivotEquiv} then implies that $P_{k+1}(\tilde{s}) = P_{k+1}(s) = P_{k}(s) \cup \{k+1\} = P_{k}(\tilde{s}) \cup \{k+1\}$. In other words, \[
\Prob\Big(\#P_{k+1}(\tilde{s}, a_{k+1}, b_{k+1}) < \#P_{k}(\tilde{s}) \, \Big| \, (a_{k+1}, b_{k+1}) \in S_{0}^{2} \Big) \le 1 - \Prob(\mathcal{A}) \le 1/10
\]
for each $\tilde{s} \in \mathcal{E}_{k}(s)$. Gathering all the cases, we deduce \[
\Prob\Big( \#P_{k+1}(\tilde{s}, a_{k+1}, b_{k+1}) < \#P_{k}(\tilde{s}) \, \Big| \, \tilde{s} \in \mathcal{E}_{k}(s), (a_{k+1}, b_{k+1}) \in S_{0}^{2} \Big) \le 1/10.
\]
This settles the case $j = 0$.

Now let $j = 1$. The event under discussion becomes void when $\# P_{k}(s) \le 2$. Excluding such cases, let $l< m$ be the last 2 elements of $P_{k}(s)$. For each choice $\tilde{s}$ in $\mathcal{E}_{k}(s)$ and each subset $A$ of $\tilde{S}_{m}'(s)$, we define \[
E(\tilde{s}, A) := \left\{ \bar{s} = (\bar{a}_{i}, \bar{b}_{i})_{i=1}^{k} : \bar{b}_{i} = \tilde{b}_{i} \,\,\textrm{for all}\,\, i, \,\,\bar{a}_{i} = \tilde{a}_{i}\,\,\textrm{for}\,\, i \neq m,\,\, \bar{a}_{m} \in A \right\}.
\]
In plain words, $E(\tilde{s}, A)$ is a set of choices that are pivoted from $\tilde{s}$ only at stage $m$, such that the pivotal choice belongs to $A$. Then $\{E(\tilde{s}, \tilde{S}_{m}'(s)) : \tilde{s} \in \mathcal{E}_{k}(s)\}$ partitions $\mathcal{E}_{k}(s)$ by Lemma \ref{lem:pivotEquiv}.

We now fix $(a_{k+1}, b_{k+1})\in S_{0}^{2}$ and $\tilde{s} = (\tilde{a}_{1}, \ldots, \tilde{b}_{k})\in \mathcal{E}_{k}(s)$. Let $A' \subseteq \tilde{S}_{m}'(s)$ be the collection of elements $\bar{a}_{m}$ that satisfies \begin{equation}\label{eqn:addCondPivot}
\big(\bar{a}_{m}^{-1} o, \,(\tilde{\w}_{2\vartheta(m) - 1})^{-1} \tilde{\w}_{2\vartheta(k) - 2} a_{k+1}^{2} b_{k+1}^{2} w_{k+1} o\big)_{o} = \big(\bar{a}_{m}^{-1} o,\, \tilde{b}_{m}^{2}  w_{m} \cdots \tilde{a}_{k}^{2} \tilde{b}_{k}^{2} w_{k} a^{2}_{k+1}  b^{2}_{k+1} w_{k+1} o)_{o} < K.
\end{equation}
Note that $A'$ depends on $\tilde{s}$, $a_{k+1}$ and $b_{k+1}$. By Property (5), (6) of Schottky sets, we have $\# \left[\tilde{S}'_{m}(s) \setminus A' \right] \le 2$.

We now claim that $\# P_{k+1}(\bar{s}, a_{k+1}, b_{k+1}) \ge \# P_{k}(s) - 1$ for $\bar{s} \in E(\tilde{s}, A')$. First, since $l<m$ are consecutive elements in $P_{k}(\bar{s})$, Lemma \ref{lem:intermediate} gives a sequence $\{l = i(1) < \ldots < i(M) = m\} \subseteq P_{k}$ with $M \ge 2$ such that $[\bar{\w}_{2\vartheta(l) - 1} o, \bar{\w}_{2\vartheta(m) - 2} \bar{a}_{m} o]$ is fully $D(K, \epsilon)$-marked with $(\gamma_{j})_{j=1}^{M-1}$, $(\eta_{j})_{j=2}^{M}$, where \[\begin{aligned}
\gamma_{1} &= [\bar{\w}_{2\vartheta(i(1)) - 1} o, \bar{\w}_{2\vartheta(i(1))} o],\\
\gamma_{j} &=  [\bar{\w}_{2\vartheta(i(j)) - 2} a_{i(j)} o, \bar{\w}_{2\vartheta(i(j))- 1} o] \quad (2 \le j \le  M-1), \\
\eta_{j} &= [\bar{\w}_{2\vartheta(i(j)) - 2}o, \bar{\w}_{2\vartheta(i(j)) - 2} a_{i(j)} o] \quad (2\le j \le  M).
\end{aligned}
\]
Moreover, Condition \ref{eqn:addCondPivot} implies that \[
(\bar{\w}_{2\vartheta(i(M)) - 2} a_{i(M)} o, \bar{\w}_{2\vartheta(k+1) - 2} o)_{\bar{\w}_{2\vartheta(i(M)) - 1}o} < K.
\]
In summary, $\{l=i(1) < \cdots < i(M)\} \subseteq P_{k}(\bar{s})$ works for $\bar{s}$ in Criterion (B) at stage $k+1$, which implies $P_{k+1}(\bar{s}) \supseteq P_{k}(\bar{s}) \cap \{1, \ldots, l\}$, hence the claim.

As a result, we deduce \[\begin{aligned}
\Prob\Big(\#P_{k+1}(\bar{s}, a_{k+1}, b_{k+1}) < \#P_{k}(s) - 1 \, \Big| \, \bar{s} \in E(\tilde{s}, \tilde{S}'_{m})\Big) &\le \frac{\# \left[ \tilde{S}'_{m}(s) \setminus A'\right]}{\# \tilde{S}'_{m}(s)}\\& \le \frac{2}{\# S_{0} - 3} \le 0.1.
\end{aligned}
\]
for any $\tilde{s} \in \mathcal{E}_{k}(s)$ and $(a_{k+1}, b_{k+1}) \in S_{0}^{2}$. Since $E(\tilde{s}, \bar{S}'_{m})$'s for $\tilde{s} \in \mathcal{E}_{k}(s)$ partition $\mathcal{E}_{k}(s)$, we deduce \[
\Prob\Big(\# P_{k+1}(\tilde{s}, a_{k+1}, b_{k+1}) < \# P_{k}(s) - 1 \, \Big| \, \tilde{s} \in \mathcal{E}_{k}(s) \Big) \le 0.1
\]
for any $(a_{k+1}, b_{k+1}) \in S_{0}^{2}$. Moreover, the above probability vanishes when $(a_{k+1}, b_{k+1}) \in \mathcal{A}$. Since $\Prob(\mathcal{A} \,|\, S_{0}^{2}) \ge 0.9$, we deduce \begin{equation}\label{eqn:firstPivotEst}
\Prob\Big(\#P_{k+1}(\tilde{s}, a_{k+1}, b_{k+1}) < \#P_{k}(s) - 1 \, \Big| \, \tilde{s} \in \mathcal{E}_{k}(s), (a_{k+1}, b_{k+1}) \in S_{0}^{2}\Big) \le 0.01.
\end{equation}
This settles the case $j=1$.

For $j =2$, we similarly assume $\#P_{k}(s) \ge 3$ and let $l' < l<m$ be the last 3 elements. We define the set $\mathcal{A}_{1}$ of $(\bar{a}_{m}, a_{k+1}, b_{k+1})$ in $\tilde{S}_{m}'(s) \times S_{0}^{2}$ such that \[
\#P_{k+1}(\underbrace{a_{1}, b_{1}, \ldots, \bar{a}_{m},  b_{m}, \ldots, a_{k}, b_{k}}_\text{obtained from $s$ by replacing $a_{m}$ with $\bar{a}_{m}$}, a_{k+1}, b_{k+1}) \ge \#P_{k}(s)-1,
\]
or equivalently, \[
P_{k}(s) \cap \{1, \ldots, l\} \subseteq P_{k+1}(a_{1}, b_{1}, \ldots, \bar{a}_{m}, b_{m}, \ldots, a_{k}, b_{k}, a_{k+1}, b_{k+1}).
\]
Now, if $\tilde{s} = (\tilde{a}_{1}, \tilde{b}_{1}, \ldots, \tilde{a}_{k}, \tilde{b}_{k}) \in \mathcal{E}_{k}(s)$ is such that $(\tilde{a}_{m}, a_{k+1}, b_{k+1}) \in \mathcal{A}_{1}$, then $(\tilde{s}, a_{k+1}, b_{k+1})$ is pivoted from $(a_{1}, b_{1}, \ldots, \tilde{a}_{m}, b_{m}, \ldots, a_{k+1}, b_{k+1})$ since they only differ at $a_{i}$'s for $i \in P_{k}(s) \cap \{1, \ldots, l\} \subseteq P_{k+1}(a_{1}, b_{1}, \ldots, \tilde{a}_{m}, b_{m}, \ldots, a_{k+1}, b_{k+1})$. Lemma \ref{lem:pivotEquiv} then implies that $P_{k+1}(\tilde{s}, a_{k+1}, b_{k+1})$ also contains $P_{k}(s) \cap \{1, \ldots, l\}$. This fact and Inequality \ref{eqn:firstPivotEst} implies that \[\begin{aligned}
&\Prob(\mathcal{A}_{1}\, | \, \tilde{S}'_{m}(s) \times S_{0}^{2}) \\
&= \Prob\Big(  (\tilde{a}_{m}, a_{k+1}, b_{k+1}) \in \mathcal{A}_{1}\, | \, \tilde{s} \in \mathcal{E}_{k}(s), (a_{k+1}, b_{k+1}) \in S_{0}^{2}\Big) \\
& \ge \Prob\Big(\#P_{k+1}(\tilde{s}, a_{k+1}, b_{k+1}) \ge \#P_{k}(s) - 1 \, \Big| \, \tilde{s} \in \mathcal{E}_{k}(s), (a_{k+1}, b_{k+1}) \in S_{0}^{2}\Big) \ge 0.99.
\end{aligned}
\]
We now define for $\tilde{s} \in \mathcal{E}_{k}(s)$ and each $A \subseteq \tilde{S}_{l}'(s)$ \
\[
E_{1}(\tilde{s}, A) := \left\{ \bar{s} = (\bar{a}_{i}, \bar{b}_{i})_{i=1}^{k} : \bar{b}_{i} = \tilde{b}_{i} \,\,\textrm{for all}\,\, i, \,\,\bar{a}_{i} = \tilde{a}_{i}\,\,\textrm{for}\,\, i \neq l,\,\, a_{l} \in A \right\}.
\]
Then $\{E(\tilde{s}, \tilde{S}_{l}'(s)) : \tilde{s} \in \mathcal{E}_{k}(s)\}$ partitions $\mathcal{E}_{k}(s)$ by Lemma \ref{lem:pivotEquiv}.

Now fixing $(a_{k+1}, b_{k+1}) \in S_{0}^{2}$ and  $\tilde{s} \in \mathcal{E}_{k}(s)$, let $A_{1}' \subseteq \tilde{S}_{l}'(s)$ be the collection of elements $\bar{a}_{l} \in \tilde{S}_{l}'(s)$ that satisfies\begin{equation}\label{eqn:addCondPivot2}
\big(\bar{a}_{l}^{-1} o, \,(\tilde{\w}_{2\vartheta(l) - 1})^{-1} \tilde{\w}_{2\vartheta(k) - 2}a_{k+1}^{2} b_{k+1}^{2} w_{k+1} o\big)_{o} =  (\bar{a}_{l}^{-1} o,\, \tilde{b}_{l}^{2} w_{l} \cdots \tilde{a}_{k}^{2} \tilde{b}_{k}^{2} w_{k} a^{2}_{k+1}  b^{2}_{k+1} w_{k+1} o)_{o} < K.
\end{equation}
By Property (5), (6) of Schottky sets, we have $\# \left[\tilde{S}'_{m}(s) \setminus A_{1}' \right] \le 2$.

We now claim that $\#P_{k+1}(\bar{s}, a_{k+1}, b_{k+1}) \ge \# P_{k}(s) - 2$ for $\bar{s} \in E_{1}(\tilde{s}, A_{1}')$. First, since $l' < l$ are consecutive elements in $P_{k}(\bar{s})$, Lemma \ref{lem:intermediate} gives a sequence $\{l' = i(1) < \ldots < i(M) = l\} \subseteq P_{k}$ such that $[\bar{\w}_{2\vartheta(l') - 1} o, \bar{\w}_{2\vartheta(l) - 2} \bar{a}_{l} o]$ is fully $D(K, \epsilon)$-marked with $(\gamma_{j})_{j=1}^{M-1}$, $(\eta_{j})_{j=2}^{M}$, where \[\begin{aligned}
\gamma_{1} &= [\bar{\w}_{2\vartheta(i(1)) - 1} o, \bar{\w}_{2\vartheta(i(1))} o],\\
\gamma_{j} &=  [\bar{\w}_{2\vartheta(i(j)) - 2} a_{i(j)} o, \bar{\w}_{2\vartheta(i(j))- 1} o] \quad (j=2, \ldots, M-1), \\
\eta_{j} &= [\bar{\w}_{2\vartheta(i(j)) - 2}o, \bar{\w}_{2\vartheta(i(j)) - 2} a_{i(j)} o] \quad (j = 2, \ldots, M).
\end{aligned}
\]
Moreover, Condition \ref{eqn:addCondPivot} implies that \[
(\bar{\w}_{2\vartheta(i(M)) - 2} a_{i(M)} o, \bar{\w}_{2\vartheta(k+1) - 2} o)_{\bar{\w}_{2\vartheta(i(M)) - 1}o} < K.
\]
In summary, $\{l'=i(1) < \cdots < i(M)\} \subseteq P_{k}(\bar{s})$ works for $\bar{s}$ in Criterion (B) at stage $k+1$, which implies that $P_{k+1}(\bar{s}) \supseteq P_{k}(\bar{s}, a_{k+1}, b_{k+1}) \cap \{1, \ldots, l'\}$, hence the claim.

As a result, we deduce \[\begin{aligned}
&\Prob\Big(\# P_{k+1}(\bar{s}, a_{k+1}, b_{k+1}) < \# P_{k}(s) - 2 \, \Big| \, \bar{s} \in E_{1}(\tilde{s}, \tilde{S}'_{l})\Big) \le 0.1.
\end{aligned}
\]
for each $\tilde{s} \in \mathcal{E}_{k}(s)$ and $(a_{k+1}, b_{k+1}) \in S_{0}^{2}$. Here, for $\tilde{s}$ and $(a_{k+1}, b_{k+1})$ such that $(\tilde{a}_{m}, a_{k+1}, b_{k+1}) \in \mathcal{A}_{1}$, the above probability vanishes. Since \[\begin{aligned}
&\Prob\left[\bigcup \{ E_{1}(\tilde{s}, \tilde{S}_{l}') \times (a_{k+1}, b_{k+1}) : (\tilde{a}_{m}, a_{k+1}, b_{k+1}) \notin \mathcal{A}_{1} \} \, \Big| \, \mathcal{E}_{k}(s) \times S_{0}^{2}  \right] \\
&= \Prob \left[ (\tilde{a}_{m}, a_{k+1}, b_{k+1}) \notin \mathcal{A}_{1}  \, \Big| \, \tilde{S}_{m}'(s) \times S_{0}^{2}\right] \le 0.01,
\end{aligned}
\]
we sum up the conditional probabilities to obtain  \begin{equation}\label{eqn:2ndPivotEst}
\Prob\Big(\# P_{k+1}(\tilde{s}, a_{k+1}, b_{k+1}) < \# P_{k}(s) - 2 \, \Big| \, \tilde{s} \in \mathcal{E}_{k}(s)\Big) \le 0.001.
\end{equation}
We repeat this procedure to cover all $j< \# P_{k}(s)$. The case $j \ge \# P_{k}(s)$ is void.
\end{proof}

\begin{cor}
Conditioned on paths $\mathbf{g} \in G^{n}$ such that $\# \Theta(\mathbf{g}) = N$, $\# P_{N}(\mathbf{g})$ is greater in distribution than the sum of $N$ i.i.d. $X_{i}$, whose distribution is given by \begin{equation}\label{eqn:expRV}
\Prob(X_{i}=j) = \left\{\begin{array}{cc} 9/10 & \textrm{if}\,\, j=1,\\ 9/10^{-j+1} & \textrm{if}\,\, j < 0, \\ 0 & \textrm{otherwise.}\end{array}\right.
\end{equation}
\end{cor}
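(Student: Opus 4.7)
The plan is to deduce the stochastic domination from a step-by-step comparison of increments, using the equivalence classes $\mathcal{E}_k(\cdot)$ as a natural filtration for the process $(\#P_k)_k$.

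First, I set up the filtration. For each $k$, let $\mathcal{G}_k$ be the sigma-algebra on $S_0^{2k}$ generated by the map $s \mapsto \mathcal{E}_k(s)$. Lemma \ref{lem:pivotEquiv} guarantees that $\#P_l$ is constant on every $\mathcal{E}_k$-class for all $l \le k$, so the entire trajectory $(\#P_0, \ldots, \#P_k)$ is $\mathcal{G}_k$-measurable. Since $\mathcal{G}_k$ depends only on the first $k$ pairs $(a_i, b_i)$, the pair $(a_{k+1}, b_{k+1})$ remains independent of $\mathcal{G}_k$ under the uniform product measure.

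Next, I recast Lemma \ref{lem:pivotCondition} as a conditional probability statement. The conditional distribution of $s$ given $\mathcal{E}_k(s) = \mathcal{E}$ is uniform on $\mathcal{E}$, so the lemma says exactly that
\[
\Prob\bigl(\#P_{k+1} - \#P_k \le -j \,\big|\, \mathcal{G}_k\bigr) \le 10^{-(j+1)} \quad \text{a.s.}
\]
for every integer $j \ge 0$. Combined with the deterministic upper bound $\#P_{k+1} - \#P_k \le 1$ (which comes from the fact that Criterion (A) adds at most one new index while Criterion (B) only removes), this amounts to saying that, conditionally on $\mathcal{G}_k$, the increment $\#P_{k+1} - \#P_k$ stochastically dominates a random variable $X_{k+1}$ with the distribution \eqref{eqn:expRV}.

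Finally, I invoke the standard quantile coupling for stochastic domination of sums. Enlarging the probability space by an independent sequence $U_1, \ldots, U_N$ of i.i.d.\ uniform $[0,1]$ variables, I build the $X_k$'s inductively: at stage $k$, both the conditional CDF of $\#P_{k+1} - \#P_k$ given $\mathcal{G}_k$ and the marginal CDF of $X$ are applied to $U_{k+1}$ via monotone inversion, yielding $X_{k+1} \le \#P_{k+1} - \#P_k$ almost surely and the $X_k$'s i.i.d.\ with law \eqref{eqn:expRV}. Summing telescopically gives
\[
\#P_N = \sum_{k=0}^{N-1} (\#P_{k+1} - \#P_k) \ge \sum_{k=1}^{N} X_k \quad \text{a.s.,}
\]
which is the claimed stochastic domination. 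There is no real obstacle here: the geometric content has already been packed into Lemma \ref{lem:pivotCondition} and Lemma \ref{lem:pivotEquiv}, and what remains is a routine measurable coupling.
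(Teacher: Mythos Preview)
Your approach is essentially the paper's: both establish that the increment $\Delta_{k+1}:=\#P_{k+1}-\#P_k$ stochastically dominates the fixed law $X$ conditionally on the $\mathcal{E}_k$-partition, and then pass to the sum. The paper carries out the last step by a direct induction on the CDF of $\#P_k$ rather than by an explicit coupling.

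Two small points deserve correction. First, your recasting of Lemma~\ref{lem:pivotCondition} is off by one: the lemma gives $\Prob(\Delta_{k+1}<-j\mid\mathcal{G}_k)\le 10^{-(j+1)}$ for $j\ge 0$, i.e.\ $\Prob(\Delta_{k+1}\le -j)\le 10^{-j}$ for $j\ge 1$, not $10^{-(j+1)}$. The remaining threshold $\Prob(\Delta_{k+1}\le 0)\le 10^{-1}$ (equivalently $\Prob(\Delta_{k+1}=1)\ge 9/10$) is exactly Lemma~\ref{lem:0thCasePivot}, which you must also invoke; Lemma~\ref{lem:pivotCondition} alone does not rule out $\Delta_{k+1}=0$ with large probability. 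With both lemmas in hand, the conditional domination $\Delta_{k+1}\succeq_{\mathrm{st}} X$ is correct.

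Second, the quantile coupling as you describe it does not give $X_{k+1}\le\Delta_{k+1}$ almost surely: applying the conditional inverse CDF $F_k^{-1}$ to the fresh uniform $U_{k+1}$ produces a variable with the right conditional law, but it is not the actual increment $\Delta_{k+1}$, which is already determined by $(a_{k+1},b_{k+1})$. The standard repair is to manufacture a uniform $V_{k+1}$ from $\Delta_{k+1}$ together with an auxiliary uniform (interpolating across the atom of $F_k$ at the observed value) and then set $X_{k+1}=F^{-1}(V_{k+1})$; alternatively, bypass the coupling entirely and run the CDF induction $\Prob(\#P_{k+1}\ge i)\ge\sum_j\Prob(\#P_k\ge j)\Prob(X=i-j)$ as the paper does. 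Either fix is routine, and your overall line is sound.
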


The RV $X_{i}$ in the above corollary satisfies \[\begin{aligned}
\E\left[X_{i}\right] &= \frac{9}{10} - \frac{9}{10} \left(\frac{1}{10} + \frac{2}{10^{2}} + \ldots\right) = \frac{9}{10} - \frac{1}{9} = \frac{71}{90}, \\
\E\left[1.4^{-X_{i}}\right] &= \frac{5}{7} \cdot \frac{9}{10} + \sum_{j=1}^{\infty} \frac{9}{10} \cdot \left( \frac{7}{50}\right)^{j} = \frac{9}{14} + \frac{9}{10} \cdot \frac{7}{50} + \frac{1}{1-7/50} = \frac{1188}{1505}.
\end{aligned}
\]

\begin{proof}
Lemma \ref{lem:0thCasePivot} and Lemma \ref{lem:pivotCondition} imply the following: for $0\le k <N$ and any $i$, \begin{equation}\label{eqn:probDistbn}
\Prob\left( \# P_{k+1}(\mathbf{g})\ge i + j \, \Big| \, \#P_{k}(\mathbf{g}) = i\right) \ge  \left\{\begin{array}{cc} 1-1/10 & \textrm{if}\,\, j=1,\\ 1 - 1/10^{-j+1} & \textrm{if}\,\, j < 0, \\ 0 & \textrm{otherwise.}\end{array}\right.
\end{equation}
Hence, there exists a nonnegative RV $U_{k}$ such that $\# P_{n+1} - U_{k}$ and $\# P_{k} + X'$ have the same distribution, where $X'$ is an i.i.d. copy of $X_{k+1}$ that is independent from $\# P_{k}$.

For each $1 \le k \le N$, we claim that $\Prob(\#P_{k}(\mathbf{g}) \ge i) \ge \Prob (X_{1} + \cdots + X_{k} \ge i)$ for each $i$.  For $k=1$, we have $\#P_{k-1}(\mathbf{g})= 0$ always and the claim follows from Inequality \ref{eqn:probDistbn}. Given the claim for $k$, we have \[\begin{aligned}
\Prob(\#P_{k+1}\ge i) &\ge \Prob(\#P_{k} + X' \ge i) = \sum_{j} \Prob(\#P_{k}\ge j) \Prob(X' = i - j)\\
 & \ge \sum_{j} \Prob(X_{1} + \cdots + X_{k} \ge j) \Prob(X_{k+1} = i-j) \\
 &= \Prob(X_{1} + \cdots + X_{k} + X_{k+1} \ge i). \qedhere
 \end{aligned}
\]
\end{proof}

Given $\mathbf{g} \in G^{n}$ with \[\begin{aligned}
\Theta(\mathbf{g}) &= \{\vartheta(1) < \ldots < \vartheta(N)\}, \\
P_{N}(\mathbf{g}) &= \{\iota(1) < \ldots < \iota(m)\} \subseteq \{1, \ldots, N\},
\end{aligned}
\] we finally define the {$l$-th pivotal time} of $\mathbf{g}$ by $2\vartheta(\iota(l))- 1$ and the \emph{set of pivotal times} $P_{n}^{\ast}(\mathbf{g})$ by \[
 P_{n}^{\ast}(\mathbf{g}) := \{2\vartheta(i) -1: i \in P_{N}(\mathbf{g})\}.
 \]
We also define $\tilde{S}_{2\vartheta(\iota(l))-1}(\mathbf{g}) := \tilde{S}_{\iota(l)}'(s)$ for $l = 1, \ldots, m$. $\bar{\mathbf{g}} \in G^{n}$ is said to be \emph{pivoted from $\mathbf{g}$} if $g_{j} = \bar{g}_{j}$ unless $j \in P_{n}^{\ast}(\mathbf{g})$, in which case we require $\bar{g}_{j} \in \tilde{S}_{j}(\mathbf{g})$. 

Lemma \ref{lem:extremal} and Corollary \ref{cor:induction} imply that \begin{equation}\label{eqn:ineq1}
(\w_{i} o, \w_{k})_{\w_{j}o} < F(K, \epsilon) < K'/4000
\end{equation}
for $i, j, k \in P_{n}^{\ast}(\mathbf{g}) \cup \{0, n\}$ such that $i \le j \le k$. Moreover, for such $i, j, k \in P_{n}^{\ast}(\mathbf{g}) \cup \{0\}$ such that $i < j \le k$, we have  \begin{equation}\label{eqn:ineq2}\begin{aligned}
1.999K' \le d(\w_{k-1} o, \w_{k} o) &\le 2K', \\ (\w_{i }o, \w_{k} o)_{\w_{i-1} o} < F(K, \epsilon) &< K'/4000.
\end{aligned}
\end{equation}
The first inequality is due to the fact that $g_{k} \in S_{1}^{(2)}\cup S_{1}^{(-2)}$; the second inequality follows from Lemma \ref{lem:intermediate} and Corollary \ref{cor:induction}.

\section{Pivoting and translation lengths}\label{section:trLength}

We will now define another equivalence relation on paths with sufficiently many pivots. Let us fix $\mathbf{g} \in G^{n}$ with $P_{n}^{\ast}(g) = \{i(1) < \ldots < i(m)\}$, where $m := \#P_{n}^{\ast}(\mathbf{g})$ satisfies $n/5 \le m \le n / 2$. For convenience, let also $i(0) = 0$. We now define quantities \[\begin{aligned}
D_{f}(\mathbf{g}) &= \sum_{l=1}^{\lfloor n/12 \rfloor} [d(\w_{i(l-1)}o, \w_{i(l)-1}o) + 2K'],\\
D_{b}(\mathbf{g}) &= \sum_{l=m-\lfloor n/12 \rfloor+1}^{m}[d(\w_{i(l-1)}o, \w_{i(l) - 1}o) + 2K']+ d(\w_{i(m)}o, \w_{n}o),\\
D_{t}(\mathbf{g}) &=  \sum_{l=1}^{m} \,[d(\w_{i(l-1)}o, \w_{i(l) - 1}o) + 2K']+ d(\w_{i(m)}o, \w_{n}o).
\end{aligned}
\]

Note the inequality \begin{equation}\label{eqn:DfControl}\begin{aligned}
D_{f}(\mathbf{g}) &\ge  \sum_{l=1}^{\lfloor n/12 \rfloor} [d(\w_{i(l-1)}o, \w_{i(l) - 1}o) + d(\w_{i(l) - 1} o, \w_{i(l)} o)]\\
& \ge d(o, \w_{i(k) - 1}),\, d(o, \w_{i(k)})
\end{aligned}
\end{equation}
for $k=1, \ldots, \lfloor n/12 \rfloor$. Similarly, $D_{t}$ dominates $d(o, \w_{n} o)$. Moreover, due to Inequality \ref{eqn:ineq1} and \ref{eqn:ineq2}, we have $|d(o, \w_{n} o) - D_{t}| < 2F(K, \epsilon) n\le K'n/1000$. We also observe that at least one of $D_{f}, D_{b}$ is smaller than $D_{t}/2 - K' n/20$; indeed, $D_{t} - D_{f} - D_{b}$ is the sum of at least $n/20$ terms of the form $d(\w_{i(l - 1)}o, \w_{i(l)-1} o)+ 2K'  \ge 2K'$. 

If $D_{f} \le D_{b}$, then we allow pivoting at the first $\lfloor n/12 \rfloor$ pivotal times. Otherwise, we allow pivoting at the last $\lfloor n/12 \rfloor$ pivotal times. Since $D_{f}$, $D_{b}$, $D_{t}$, and the set of pivotal times are invariant under pivoting, this rule partitions $\{\mathbf{g} \in G^{n} : \# P_{n}^{\ast}(\mathbf{g}) \ge n/5\}$ into equivalence classes $\mathcal{F}(\mathbf{g})$'s.

We are now ready to prove the core lemma for Theorem \ref{thm:generic}.

\begin{lem}\label{lem:trLength}
Let $n>25$ and suppose that $\mathbf{g} \in G^{n}$ satisfies \[
\# P_{n}^{\ast}(\mathbf{g}) \ge n/5, \quad D_{f}(\mathbf{g}) \le D_{b}(\mathbf{g}).
\] Let also $1\le k < k' \le \lfloor n/12 \rfloor $ and $\tilde{g}_{i(l)} \in \tilde{S}_{i(l)}(\mathbf{g})$ for $l=1, \ldots, k-1, k'+1, \ldots, \lfloor n/12 \rfloor$.

Then there exist $A \subseteq \tilde{S}_{i(k)}(\mathbf{g})$ and $A' \subseteq \tilde{S}_{i(k')}(\mathbf{g})$, each of cardinality at most 2, such that the following holds: for any $\bar{\mathbf{g}} \in \mathcal{F}(\mathbf{g})$ such that $\bar{g}_{i(l)} = \tilde{g}_{l}$ for $l=1, \ldots, k-1, k'+1, \ldots, \lfloor n/12 \rfloor$ and $\bar{g}_{i(k)} \notin A^{(2)}$, $\bar{g}_{i(k')} \notin A'^{(2)}$, we have $\tau(\bar{\w}_{n}) \ge K'n/12$.
\end{lem}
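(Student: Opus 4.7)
My plan is to bound the Gromov-type product $(o, \bar{\w}_n^2 o)_{\bar{\w}_n o}$ by $D_f + O(F)$ and then apply the standard stable-translation-length estimate
$$\tau(g) \;\ge\; d(o, go) - 2\,(o, g^2 o)_{go} - O(\delta),$$
which is valid in $\delta$-hyperbolic spaces by the Gromov inequality and, for the thick Schottky configurations our markings produce, in Teichm\"uller space by Rafi's fellow-travelling (Fact \ref{lem:farSegment}). The displacement term is supplied by Lemma \ref{lem:extremal} and Corollary \ref{cor:induction} applied to $\bar{\mathbf{g}}$ (exactly as in Inequality 3.10), giving $d(o, \bar{\w}_n o) \ge D_t - K'n/1000$; note that $D_t$, $D_f$, $D_b$ are invariant on $\mathcal{F}(\mathbf{g})$ since pivoting leaves the intermediate words $w_j$ between Schottky pivots unchanged. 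Combined with $D_t - 2D_f \ge K'n/10$ (the paragraph preceding the lemma), the above reduction yields $\tau(\bar{\w}_n) \ge K'n/10 - O(K'n/1000) \ge K'n/12$ for $n$ sufficiently large, so the whole lemma reduces to the Gromov-product bound.

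To bound the Gromov product, I apply Lemma \ref{lem:markedConcat} with $x_0 = o$, $x_1 = \bar{\w}_n o$, $x_2 = \bar{\w}_n \bar{\w}_{i(k')} o$ to concatenate (a) the marking of $[o, \bar{\w}_n o]$ from Lemma \ref{lem:extremal} on $\bar{\mathbf{g}}$ and (b) the $\bar{\w}_n$-translate of the marking of the prefix $[o, \bar{\w}_{i(k')} o]$ furnished by Lemma \ref{lem:intermediate2} (together with the initial boundary piece from the proof of Lemma \ref{lem:extremal}). Provided the junction condition $(y_1, z_1)_{\bar{\w}_n o} \le D$ required by Lemma \ref{lem:markedConcat} is met, the output is a fully $D$-marked $[o, \bar{\w}_n \bar{\w}_{i(k')} o]$ in which $\bar{\w}_n o$ is a pivotal point; Corollary \ref{cor:induction}(3) then yields $(o, \bar{\w}_n \bar{\w}_{i(k')} o)_{\bar{\w}_n o} < F$. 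By $\bar{\w}_n^{-1}$-equivariance, $(\bar{\w}_n \bar{\w}_{i(k')} o, \bar{\w}_n^2 o)_{\bar{\w}_n o} = (\bar{\w}_{i(k')} o, \bar{\w}_n o)_o \le d(o, \bar{\w}_{i(k')} o) \le D_f + O(F)$ (using that $\bar{\w}_{i(k')} o$ is a pivotal point of $[o, \bar{\w}_n o]$'s marking). A one-step Gromov-type triangulation at $\bar{\w}_n o$ (direct in the hyperbolic case; via Fact \ref{lem:farSegment} applied to the thick Schottky segments flanking $\bar{\w}_n o$ in the Teichm\"uller case) combines these into $(o, \bar{\w}_n^2 o)_{\bar{\w}_n o} \le D_f + O(F)$.

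The sets $A$ and $A'$ arise from the junction condition. After applying $\bar{\w}_n^{-1}$-equivariance and splitting by the Gromov inequality, the condition $(y_1, z_1)_{\bar{\w}_n o} \le D$ factors into two sub-conditions of the form
$$ (\xi_k, \bar{a}_k^{\pm 2} o)_o < K \qquad\text{and}\qquad (\xi_{k'}, \bar{a}_{k'}^{\pm 2} o)_o < K, $$
where $\xi_k, \xi_{k'} \in X$ depend only on the fixed context (i.e.\ not on the pivoted element). Properties (5) and (6) of the Schottky set $S_1$ guarantee that for each sub-condition at most one element of $S_1$ and at most one element of $S_1^{(-1)}$ violate it. Defining $A, A'$ to be these (at most $2$) bad elements of $S_0$ respectively, every $\bar{\mathbf{g}} \in \mathcal{F}(\mathbf{g})$ with $\bar{g}_{i(k)} \notin A^{(2)}$ and $\bar{g}_{i(k')} \notin A'^{(2)}$ satisfies the junction condition, and the argument of the previous paragraph goes through.

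The main obstacle is the clean factorisation of the junction condition into the two pivot-controlled sub-conditions above. The two pivots play complementary roles: $\bar{a}_{k'}$ provides the terminal pivot $\bar{\w}_{i(k')} o$ that lets Corollary \ref{cor:induction}(3) apply to the enlarged marked segment (it is what lets us convert an $F$-bound on a Gromov product into the $D_f$-bound we need), while $\bar{a}_k$ — the earliest available pivot in the $\bar{\w}_n$-translated second copy — governs the Schottky direction at the closest non-trivial Schottky event to the junction $\bar{\w}_n o$, which is what the Gromov-type witnessing at $\bar{\w}_n o$ ultimately depends on.
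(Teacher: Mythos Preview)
Your outline has a genuine gap at the very step you flag as ``the main obstacle'': the junction at $\bar{\w}_n o$. To apply Lemma~\ref{lem:markedConcat} with $x_1=\bar{\w}_n o$ as an internal pivotal point and then invoke Corollary~\ref{cor:induction}(3), you need Schottky segments $[z_1,\bar{\w}_n o]$ and $[\bar{\w}_n o, y_1]$ that are $\epsilon$-thick and longer than $L$. Neither the marking of $[o,\bar{\w}_n o]$ from Lemma~\ref{lem:extremal} nor any prefix marking supplies such segments: $\bar{\w}_n o$ is merely the terminal endpoint, not a Schottky pivot. The ``factorisation'' you assert --- that the junction condition at $\bar{\w}_n o$ splits into Schottky conditions on $\bar a_k$ and $\bar a_{k'}$ --- is not justified and is in fact false as stated. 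The Schottky event closest to $\bar{\w}_n o$ in the translated second copy is at $\bar{\w}_n\bar{\w}_{i(1)}o$, governed by the \emph{fixed} choice $\tilde g_{i(1)}$, not by $\bar a_k$; on the other side, the last Schottky event before $\bar{\w}_n o$ sits at the final pivotal time $i(m)$, well beyond $\lfloor n/12\rfloor$ and hence not among the slots you are pivoting. Neither $\bar a_k$ nor $\bar a_{k'}$ controls the geometry at $\bar{\w}_n o$.

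The paper's proof avoids this precisely by \emph{not} using $\bar{\w}_n o$ as a junction. It observes that the word $v:=(\bar{\w}_{i(k')})^{-1}\bar{\w}_n\bar{\w}_{i(k)-1}$ depends only on the fixed context (the $\tilde g_{i(l)}$ and the non-pivoted letters), so that the sets $A,A'$ can be defined by Gromov-product conditions against $v$. The junctions are then placed at the genuine Schottky points $\bar{\w}_{i(k')-1}h'o$ and $\bar{\w}_n\bar{\w}_{i(k)}o$; the long bridge between them is handled by Fact~\ref{lem:farSegment} using $d(o,vo)\ge D_t-2D_f-K'n/1000$. This yields a $D$-marking of $[\bar{\w}_{i(k)}o,\bar{\w}_n^i\bar{\w}_{i(k)}o]$ for every $i\ge 1$, from which $\tau(\bar{\w}_n)$ is read off directly by letting $i\to\infty$ --- so the abstract inequality $\tau(g)\ge d(o,go)-2(o,g^2o)_{go}-O(\delta)$ (whose validity in $\T(\Sigma)$ you do not establish) is never needed.
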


\begin{proof}
By the assumption $\bar{\mathbf{g}} \in \mathcal{F}(\mathbf{g})$, $\bar{\mathbf{g}}$ can differ only at step $i(1)$, $\ldots$, $i(\lfloor n/12 \rfloor)$. Hence, for $\bar{\mathbf{g}} \in \mathcal{F}(\mathbf{g})$ such that $\bar{g}_{i(l)} =\tilde{g}_{i(l)}$ for $l=1, \ldots, k-1, k'+1, \ldots, \lfloor n/12 \rfloor$, the isometry \[
v := (\bar{\w}_{i(k')})^{-1} \bar{\w}_{n} \bar{\w}_{i(k) - 1} = \bar{g}_{i(k')+ 1} \cdots \bar{g}_{n} \cdot \bar{g}_{1} \cdots \bar{g}_{i(k)-1}
\]
is uniform. We define \[\begin{aligned}
A := \{g \in \tilde{S}_{k}(\mathbf{g}) : (v^{-1} o, g^{2} o)_{o} \ge K\}, \\
A':= \{g \in \tilde{S}_{k'}(\mathbf{g}) : (g^{-1} o, vo)_{o} \ge K\}.
\end{aligned}
\]
Since $\tilde{S}_{k}(\mathbf{g}) \subseteq S_{0} = S_{1} \cup S_{1}^{(-1)}$, Property (5), (6) of Schottky sets imply that $\#A \le 2$. Similarly we have $\#A' \le 2$.

Let us now fix $h \in \tilde{S}_{k}(\mathbf{g}) \setminus A$, $h' \in \tilde{S}_{k'} (\mathbf{g}) \setminus A'$ and consider $\bar{\mathbf{g}} \in \mathcal{F}(\mathbf{g})$ such that $\bar{g}_{\rho(l)} = \tilde{g}_{l}$ for $l=1, \ldots, k-1, k' + 1, \ldots, \lfloor n/12 \rfloor$ and $\bar{g}_{\rho(k)}=h^{2}$, $\bar{g}_{\rho(k')}= h'^{2}$. 

Since $i(k), i(k') \in P_{n}^{\ast}(\mathbf{g}) = P_{n}^{\ast}(\bar{\mathbf{g}})$ and $h'^{2} = \bar{g}_{i(k')}$, Lemma \ref{lem:intermediate2} gives Schottky segments $(\gamma_{l})_{l=1}^{M-1}$, $(\eta_{l})_{l=2}^{M}$ such that $[\bar{\w}_{i(k)} o, \bar{\w}_{i(k')- 1} h' o]$ is fully $D(K, \epsilon)$-marked with $(\gamma_{l})$, $(\eta_{l})$, where $\gamma_{1} = [\bar{\w}_{i(k)} o, \bar{\w}_{i(k)+1} o]$, $\eta_{M} = [\bar{\w}_{i(k') - 1} o, \bar{\w}_{i(k') - 1} h' o]$. 

Next, Inequality \ref{eqn:DfControl} implies that \[\begin{aligned}
d(o, vo) &\ge d(o, \bar{\w}_{n} o) - d(o, \bar{\w}_{i(k')} o) - d(o, \bar{\w}_{i(k) -1} o) \\
&\ge (D_{t} - K'n/1000) - 2D_{f} \\
&\ge K'n/12\ge 2K' + 3D(K, \epsilon).
\end{aligned}
\]
Since we also have $(h'^{-1} o, vo)_{o} \le K$ and $(o, vh^{2} o)_{vo} \le K$, Fact \ref{lem:farSegment} implies that $[h'^{-1} o, v h^{2} o]$ is $D(K, \epsilon)$-witnessed by $([h'^{-1} o, o], [vo, v h^{2} o])$. By applying isometry $\bar{\w}_{n}^{i-1} \bar{\w}_{i(k')}$, we deduce that $[\bar{\w}_{n}^{i-1}\bar{\w}_{i(k') - 1} h' o, \bar{\w}_{n}^{i}  \bar{\w}_{i(k)}o]$ is $D(K, \epsilon)$-witnessed by Schottky segments \[
[\bar{\w}_{n}^{i-1}\bar{\w}_{i(k')- 1} h' o, \bar{\w}_{n}^{i-1}\bar{\w}_{i(k')} o], \quad [\bar{\w}_{n}^{i} \bar{\w}_{i(k) - 1} o, \bar{\w}_{n}^{i}  \bar{\w}_{i(k)}o].
\]

We now claim that $[\bar{\w}_{i(k)} o, \bar{\w}_{n}^{i} \bar{\w}_{i(k)} o]$ is fully $D(K, \epsilon)$-witnessed by
 \[\begin{aligned}
(\gamma_{1}, \ldots, \gamma_{M-1}, [\bar{\w}_{i(k') - 1} h' o, \bar{\w}_{i(k')} o], \quad \bar{\w}_{n}\gamma_{1}, \ldots, \bar{\w}_{n}\gamma_{M-1},[\bar{\w}_{n}\bar{\w}_{i(k') - 1} h' o, \bar{\w}_{n}\bar{\w}_{i(k')} o], \,\, \ldots), \\
(\eta_{2}, \ldots, \eta_{M},[\bar{\w}_{n} \bar{\w}_{i(k)-1} o, \bar{\w}_{n} \bar{\w}_{i(k)}o],  \quad \bar{\w}_{n}\eta_{2}, \ldots, \bar{\w}_{n}\eta_{M}, [\bar{\w}_{n}^{2} \bar{\w}_{i(k)-1} o, \bar{\w}_{n}^{2}  \bar{\w}_{i(k)}o],  \quad\quad\, \,\,\ldots).
\end{aligned}
\]
This claim will follow from Lemma \ref{lem:markedConcat} once we check \[\begin{aligned}
(\bar{\w}_{i(k') - 1}o, \bar{\w}_{i(k')} o)_{\bar{\w}_{i(k')- 1} h' o} &= (h'^{-1} o, h' o)_{o} < D(K, \epsilon),\\
(\bar{\w}_{i(k)-1} o, \bar{\w}_{i(k)+1} o)_{\bar{\w}_{i(k)} o} &= (h^{-2} o, g_{i(k)+1} o)_{o} < D(K, \epsilon).
\end{aligned}
\]
The first item follows from Property (7) of Schottky sets, and the second item follows from $h \in \tilde{S}_{k}(\mathbf{g})$; hence the claim. In particular, Corollary \ref{cor:induction} implies $(\bar{\w}_{i(k)} o,  \bar{\w}_{n}^{i} \bar{\w}_{i(k)} o)_{\bar{\w}_{n}^{i-1} \bar{\w}_{i(k)} o} < F(K, \epsilon)$ for each $i \ge 1$ and \[\begin{aligned}
\frac{1}{i} d\left(\bar{\w}_{i(k)} o, \,\, \bar{\w}_{n}^{i} \bar{\w}_{i(k)} o\right) &\ge d(\bar{\w}_{i(k)} o,  \bar{\w}_{n} \bar{\w}_{i(k)} o) - F(K, \epsilon) \\
& \ge [d(o, \w_{n} o) - d(o, \bar{\w}_{i(k)} o) - d(\bar{\w}_{n}o, \bar{\w}_{n}\bar{\w}_{i(k)} o)] - K'n/1000\\
&\ge D_{t} - 2D_{f} - K'n/500 \ge K'n/12.
\end{aligned}
\]
By sending $i \rightarrow \infty$, we deduce that $\tau(\bar{\w}_{n}) \ge K'n/12$.
\end{proof}

%
%

\section{Proof of Theorem \ref{thm:generic}}\label{section:proof}

Let us now prove Theorem \ref{thm:generic}.

\begin{proof}
Let $S' \subseteq G$ be the given finite set. By using Lemma \ref{lem:Schottky}, we take a $(K, K', \epsilon)$-Schottky subset $S_{1}$ of $G$ such that $K'>2L(K ,\epsilon) + 5000 F(K, \epsilon)$ and a finite symmetric generating set $S \supseteq S'$ such that $e \in S$ and $S$ is nicely populated by $S_{1}^{(2)} \cup S_{1}^{(-2)}$.

As before, we consider the random walk on $G$ generated by the uniform measure $\mu_{S}$ on $S$. We first claim \[
\Prob(\#P_{n}^{\ast} \le n/5) \le \frac{n}{3} \cdot 0.9^{n} + 0.9886^{n}.
\]
for large $n$. The first term of the RHS is for the event of trajectories $\mathbf{g}$ with $\#\Theta(\mathbf{g}) \le n/3$, 
whose probability is at most \[
\sum_{i=0}^{\lfloor n/3 \rfloor} \binom{\lfloor n/2 \rfloor}{i} \cdot 0.99^{i} \cdot 0.01^{\lfloor n/2 \rfloor - i} \le \sum_{i=0}^{\lfloor n/3 \rfloor} \binom{\lfloor n/2 \rfloor}{\lfloor n/3 \rfloor} \cdot 0.99^{\lfloor n/3 \rfloor } \cdot 0.01^{n/6 - 1} \le \frac{n}{3}  \cdot0.9^{n}.
\]
Here the final inequality is deduced from the fact that \[
\binom{3(m+1)}{2(m+1)} \cdot 0.01^{m+1} = \binom{3m}{2m} \cdot \frac{(3m+3)(3m+2)(3m+1)}{(m+1)(2m+1)(2m+2)} \cdot 0.01^{m+1} \le \binom{3m}{2m} \cdot 0.01^{m} \cdot 0.07
\]
for sufficiently large $m$, and that $0.07^{1/6} < 0.9$.

The second term is an estimation for the sum of $m$ i.i.d. RVs $X_{i}$ of the distribution in Equation \ref{eqn:expRV}. Recall that $X_{i}$ is an RV with exponential tail and $\E[X_{i}] = 71/90$. Hence for $\lambda< 71/90$, the theory of large deviation says that $\Prob\left( \sum_{i=1}^{m} X_{i} < \lambda m\right) \le e^{-\kappa(\lambda) m}$ for some $\kappa(\lambda) > 0$. The easiest way to show this (for suitable $\lambda$) is to take an intermediate base $\lambda< \lambda_{0} < \E[X_{i}]$ and apply Markov's inequality to the RV $\lambda_{0}^{\sum_{i} X_{i}}$. Indeed, Markov's inequality tells us that \[
\Prob\left(\sum_{i=1}^{m} X_{i} < n/5\right) \cdot 1.4^{-n/5} \le \E\left[1.4^{-\sum_{i=1}^{m} X_{i}}\right] =\prod_{i=1}^{m} \E\left[1.4^{-X_{i}}\right]= \left(\frac{1188}{1505}\right)^{m}
\]
and the desired estimate follows for $m \ge n/3$.

We now consider an equivalence class $\mathcal{F}(\mathbf{g})$ of $\mathbf{g} \in G^{n}$ such that $\#P_{n}(\mathbf{g}) \ge n/5$ and $D_{f}(\mathbf{g}) \le D_{b}(\mathbf{g})$. For $h_{i} \in \tilde{S}_{i}(\mathbf{g})$ and $k=1, \ldots, \lfloor n/24 \rfloor$, Lemma \ref{lem:trLength} gives the sets
\[\begin{aligned}
&A_{k} (\mathbf{g}, \{ h_{l}, h_{\lfloor n/12 \rfloor - l}\}_{l=1}^{k-1}) \subseteq \tilde{S}_{i(k)}(\mathbf{g}), \\
&A_{k}' (\mathbf{g}, \{ h_{l}, h_{\lfloor n/12 \rfloor - l}\}_{l=1}^{k-1}) \subseteq \tilde{S}_{i(\lfloor n/12 \rfloor - k)}(\mathbf{g})
\end{aligned}
\] with cardinality at most 2, such that the following holds: $\bar{g} \in \mathcal{F}(\mathbf{g})$ satisfies $\tau(\bar{\w}_{n}) < K'n/12$ only if $\bar{g}_{\rho(k)} \in A_{k}(\mathbf{g},  \{ \bar{g}_{i(l)}, \bar{g}_{i(\lfloor n/12 \rfloor - l)}\}_{l=1}^{k-1})$ or $\bar{g}_{i(\lfloor n/12 \rfloor - k)} \in A_{k}'(\mathbf{g},  \{ \bar{g}_{i(l)}, \bar{g}_{u(\lfloor n/12 \rfloor - l)}\}_{l=1}^{k-1})$ holds for each $k=1, \ldots, \lfloor n/24 \rfloor$. This implies that
\[\begin{aligned}
&\Prob\left( \tau(\bar{\w}_{n}) \le K'n /12 \, \Big| \, \bar{\mathbf{g}} \in \mathcal{F}(\mathbf{g}) \right) \\
&\le \prod_{k=1}^{\lfloor n/24 \rfloor}  \frac{(\# \tilde{S}_{i(k)})(\# \tilde{S}_{i(\lfloor n/12 \rfloor - k)})- (\# \tilde{S}_{i(k)}-2)(\# \tilde{S}_{i(\lfloor n/12 \rfloor - k)}-2)}{(\# \tilde{S}_{i(k)})(\# \tilde{S}_{i(\lfloor n/20 \rfloor - k)})}
\\&\le \prod_{k=1}^{\lfloor n/24 \rfloor} \left[ \frac{2}{\# \tilde{S}_{i(k)}}+ \frac{2}{\# \tilde{S}_{i(\lfloor n/12 \rfloor - k)}} \right] \\
&\le \prod_{k=1}^{\lfloor n/24 \rfloor} \left[ \frac{2}{0.99 \#S} + \frac{2}{0.99\#S}\right]= (0.2475 \#S)^{-\lfloor n/24\rfloor}.
\end{aligned}
\]
A similar argument leads to the same conclusion for $\mathcal{F}(\mathbf{g})$'s where $D_{f}(\mathbf{g}) > D_{b}( \mathbf{g})$. Since these $\mathcal{F}(\mathbf{g})$'s partition $\{\# P_{n}  \ge n/5\}$, we conclude that the number of $n$-step trajectories $\w$ such that $\tau(\w_{n}) \le K'n/12$ is bounded by \[
(\#S)^{n} \cdot [0.91^{n} + 0.9886^{n} + (0.2475\#S)^{-n/24}] \le 0.999^{n} \cdot (0.99 \#S)^{n}
\]
for sufficiently large $n$. Since any mapping class in $B_{S}(n)$ is obtained from an $n$-step trajectory, we conclude that the number of mapping classes in $B_{S}(n)$ with translation length less than $K'n/12$ is bounded by $0.999^{n} \cdot (0.99\#S)^{n}$. 

Meanwhile, the set \[
S_{Schottky} = \{(a_{1}, \ldots, a_{n}): a_{i} \in S_{1} \cup S_{1}^{-1}, a_{i} \neq a_{i+1}^{-1}\}
\] is composed of at least $(\#S_{0} - 1)^{n} \ge (0.99\#S)^{n}$ sequences. We claim that if $(a_{1}, \ldots, a_{n}), (b_{1}, \ldots, b_{n})$ are distinct sequences in $S_{Schottky}$, then $a_{1}^{2} \cdots a_{n}^{2}$, $b_{1}^{2} \cdots b_{2}^{2}$ are distinct elements in $B_{S}(n)$. Indeed, the sequence \[
(a_{n}^{-1}, a_{n}^{-1} \cdots, a_{1}^{-1} , a_{1}^{-1}, b_{1}, b_{1}, \ldots, b_{n}, b_{n})
\] will not completely cancel out and their product will not become an identity by Lemma \ref{lem:almostInj}. Hence, we have at least $(0.99\#S)^{n}$ distinct elements in $B_{S}(n)$. We thus finally have \[
\frac{\#\{g \in B_{S}(n) : \tau_{X}(g) \le K'n/12\}}{\#B_{S}(n)} \le 0.999^{n}
\]
for large $n$, finishing the proof.
\end{proof}

\appendix

\section{The proof of Claim \ref{claim:prop4.2}}\label{section:remark}

In this section, we prove Claim \ref{claim:prop4.2} in the proof of Lemma \ref{prop:Schottky}. We first recall the following lemma:

\begin{fact}[{\cite[Lemma 3.12]{choi2021clt}}]\label{lem:passerBy}
For each $F, \epsilon > 0$, there exists $H, L > F$ that satisfies the following condition. If $x, y, z, p_{1}, p_{2}$ in $X$ satisfy that: \begin{enumerate}
\item $[p_{1}, p_{2}]$ is $\epsilon$-thick and longer than $L$,
\item $[x, y]$ is $F$-witnessed by $[p_{1}, p_{2}]$, and
\item $(x, z)_{y} \ge d(p_{1}, y)-F$,
\end{enumerate}
then $[z, y]$ is $H$-witnessed by $[p_{1}, p_{2}]$.
\end{fact}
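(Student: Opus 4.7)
The plan is to translate the Gromov-product hypothesis $(x,z)_{y}\ge d(p_{1},y)-F$ into the required witnessing relation for $[z,y]$, using the Gromov inequality in the $\delta$-hyperbolic case and Rafi's fellow-travelling theorem in the Teichm\"uller case. In both settings the idea is the same: the hypothesis says that $[y,z]$ shares an initial portion of length $\approx d(p_{1},y)$ with $[y,x]$, so $[z,y]$ must fellow-travel $[p_{1},p_{2}]$ through roughly the same region as $[x,y]$ does.

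For $\delta$-hyperbolic $X$, I first unpack Definition \ref{dfn:witnessing1} with $n=1$: the $F$-witnessing of $[x,y]$ by $[p_{1},p_{2}]$ is equivalent to the four inequalities
\[
(x,y)_{p_{1}},\ (x,y)_{p_{2}},\ (x,p_{2})_{p_{1}},\ (p_{1},y)_{p_{2}}\;<\;F.
\]
Via the identity $(a,b)_{c}+(a,c)_{b}=d(b,c)$, the first two yield $(x,p_{i})_{y}>d(p_{i},y)-F$ for $i=1,2$, and the fourth yields $d(p_{2},y)\le d(p_{1},y)-d(p_{1},p_{2})+2F$. Taking $L\ge 2F$ thus gives $d(p_{2},y)\le d(p_{1},y)$, so the hypothesis $(x,z)_{y}\ge d(p_{1},y)-F$ dominates both $d(p_{i},y)-F$. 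The Gromov inequality $(p_{i},z)_{y}\ge\min\{(x,p_{i})_{y},(x,z)_{y}\}-\delta$ then gives $(p_{i},z)_{y}>d(p_{i},y)-F-\delta$, whence $(y,z)_{p_{i}}<F+\delta$ for $i=1,2$. To control the remaining quantity $(p_{2},z)_{p_{1}}$ I use the dual form $\min\{(p_{2},z)_{p_{1}},(p_{2},y)_{p_{1}}\}\le (z,y)_{p_{1}}+\delta$: since $(p_{2},y)_{p_{1}}=d(p_{1},p_{2})-(p_{1},y)_{p_{2}}>L-F$ can be forced to exceed $(z,y)_{p_{1}}+\delta$ by choosing $L>2F+2\delta$, the minimum must be realised by $(p_{2},z)_{p_{1}}$, giving $(p_{2},z)_{p_{1}}<F+2\delta$. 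Together with the preserved $(p_{1},y)_{p_{2}}<F$ this yields all four Gromov-product inequalities, so $[z,y]$ is $H$-witnessed by $[p_{1},p_{2}]$ with $H:=F+2\delta$.

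For $X=\T(\Sigma)$ the Gromov inequality is unavailable, but Rafi's fellow-travelling theorem, which is the engine behind Facts \ref{lem:concat} and \ref{lem:concatUlt}, plays exactly the same role provided $[p_{1},p_{2}]$ is $\epsilon$-thick and of length at least $L=L(F,\epsilon)$. Concretely, the witnessing of $[x,y]$ produces a subsegment $[p_{1}',p_{2}']\subseteq[x,y]$ that $F$-fellow-travels $[p_{1},p_{2}]$ at parameter distance $\approx d(p_{1},y)$ from $y$; the hypothesis $(x,z)_{y}\ge d(p_{1},y)-F$ forces $[y,z]$ and $[y,x]$ to remain close over an initial portion of length at least $d(p_{1},y)-F$, so a neighbourhood of $[p_{1}',p_{2}']$ is visible in $[y,z]$ as well. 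Rafi's theorem, applied to the thick segment $[p_{1},p_{2}]$, then transports the fellow-travelling from $[x,y]$ to $[z,y]$, producing a subsegment of $[z,y]$ $H$-fellow-travelling $[p_{1},p_{2}]$ for a suitable $H=H(F,\epsilon)$.

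The main obstacle is precisely the Teichm\"uller transplant step: converting coarse Gromov-product information into genuine fellow-travelling of Teichm\"uller geodesics requires both the thickness and the length of $[p_{1},p_{2}]$. Without them the initial shared portion of $[y,z]$ and $[y,x]$ could pass through the thin part and the geodesic $[z,y]$ might drift away from $[p_{1},p_{2}]$ entirely; this is the standard obstruction that Rafi's work is designed to handle and it is what forces the lower bound $L$ in the conclusion.
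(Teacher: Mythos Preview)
The paper does not actually contain a proof of this Fact: it is explicitly listed among the statements whose proofs are deferred to \cite{choi2021clt} (see the end of the introduction). So there is no ``paper's own proof'' to compare against, and your proposal stands on its own.

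Your $\delta$-hyperbolic case is correct and complete. Unpacking Definition~\ref{dfn:witnessing1} with $n=1$ gives exactly the four Gromov-product inequalities you list; the chain of estimates via the identity $(a,b)_{c}+(a,c)_{b}=d(b,c)$ and the Gromov inequality is clean, and the choice $H=F+2\delta$, $L>2F+2\delta$ works as stated.

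Your Teichm\"uller case, on the other hand, is only a sketch. The heuristic ``$(x,z)_{y}\ge d(p_{1},y)-F$ forces $[y,z]$ and $[y,x]$ to remain close over an initial portion of length at least $d(p_{1},y)-F$'' is the right intuition, but Gromov-product information alone does not directly give fellow-travelling of Teichm\"uller geodesics; you need a precise statement from Rafi's work (e.g.\ the thin-triangle theorem applied to the triangle $x,y,z$, with the thick segment $[p_{1},p_{2}]$ lying on the $[x,y]$ side past the near-centre). As written, you have not identified which version of Rafi's theorem you are invoking, nor verified its hypotheses (in particular, why the near-centre of $\triangle(x,y,z)$ lies on the far side of $p_{1}$ from $y$, which is what the hypothesis $(x,z)_{y}\ge d(p_{1},y)-F$ should buy you). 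This is the genuine content of the lemma in the Teichm\"uller setting, and it is exactly the step that \cite{choi2021clt} carries out; your proposal gestures at it but does not fill it in.
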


Recall that we have fixed $o \in X$ and independent loxodromics $a, b \in G$. By Lemma 4.3 and 4.4 of \cite{choi2021clt}, there exists $\epsilon_{0}, C_{0}>0$ such that the following hold: \begin{enumerate}
\item $[o, a^{i} o]$, $[o, b^{i} o]$ are $\epsilon_{0}$-thick for all $i \in \Z$, and
\item $(\phi^{i} o, \psi^{j} o)_{o} < C_{0}$ for all $i, j > 0$ and $\phi, \psi \in \{a, b, a^{-1}, b^{-1}\}$ such that $\phi \neq \psi$.
\end{enumerate}
 
We then define: \begin{itemize}
\item $D_{0} = D(C = C_{0}, \epsilon_{0})$ as in Fact \ref{lem:1segment};
\item $E_{0} = E(D = D_{0}, \epsilon_{0})$, $L_{0} =L(D = D_{0}, \epsilon_{0})$ as in Fact \ref{lem:concat};
\item $F_{0} = F(E = E_{0}, \epsilon_{0})$, $L_{1} = L(E = E_{0}, \epsilon_{0})$ as in Fact \ref{lem:concatUlt};
\item $H_{0} = H(F = F_{0}, \epsilon_{0})$, $L_{2} = L(F = F_{0}, \epsilon_{0})$ as in Fact \ref{lem:passerBy}; 
\item $H_{1} = H(F = H_{0}, \epsilon_{0})$ $L_{3} = L(F = H_{0}, \epsilon_{0})$ as in Fact \ref{lem:passerBy}; 
\item $F_{1} = 2F_{0} + H_{1} +\delta + 1$;
\item $F_{2} = F(E = H_{0}, \epsilon_{0})$ and $L_{4} = L(E = H_{0}, \epsilon_{0})$ in Fact \ref{lem:concatUlt}; 
\item $L_{5} = \max( L_{0}, L_{1}, L_{2}, L_{3}, L_{4}, 2F_{0}+ F_{1} + 2F_{2})$. 
\end{itemize}

There exists $N_{0}$ such that $d(o, \phi^{N} o) > L_{5}$ for all $\phi \in \{a, b, a^{-1}, b^{-1}\}$ and $N> N_{0}$. Let us fix $N> N_{0}$. We now consider a sequence $\{\phi_{i}\}$ in $\{a, b, a^{-1}, b^{-1}\}$ such that $\phi_{i} \neq \phi_{i+1}^{-1}$. Then we observe the following:\begin{enumerate}
\item $(\phi_{i}^{-N} o, \phi_{i+1}^{N})_{o} < C_{0}$ by the assumption $\phi_{i} \neq \phi_{i+1}^{-1}$.
\item $[o, \phi_{i}^{N} \phi_{i+1}^{N} o]$ is $D_{0}$-witnessed by $[o, \phi_{i}^{N} o]$ and $[\phi^{N} o, \phi^{N}\phi_{i+1}^{N} o]$ by Fact \ref{lem:1segment}.
\item $[o, \phi_{i}^{N} o]$ are $\epsilon$-thick and longer than $L_{5}$.
\end{enumerate}
Then as in Corollary \ref{cor:induction},  Fact \ref{lem:concat} and Fact \ref{lem:concatUlt} imply that $[o, \phi_{1}^{2N} \cdots \phi_{n}^{2N} o]$ is $F_{0}$-witnessed by $\epsilon_{0}$-thick segments\[
[o, \phi_{1}^{N}o], [\phi_{1}^{N}o, \phi_{1}^{N}\phi_{2}^{N} o], \ldots, [\phi_{1}^{N} \cdots \phi_{n-1}^{N}o, \phi_{1}^{N} \cdots \phi_{n}^{N}o].
\] Consequently, $[o, \phi_{1}^{N} \cdots \phi_{n}^{N} o]$ is $\epsilon$-thick for $\epsilon = \epsilon_{0}e^{-8F_{0}}$. It is also clear that \[
(o, \phi_{1}^{N} \cdots \phi_{n}^{N} o)_{\phi_{1}^{N} \cdots \phi_{m}^{N} o} < F_{0}\quad (0 \le m \le n)
\]
 and \begin{equation}\label{eqn:nonDecrease}
 d(o, \phi_{1}^{N} \cdots \phi_{n}^{N} o) \ge d(o, \phi_{1}^{N} \cdots \phi_{n-1}^{N} o) + L_{1} - 2F_{0} \ge d(o, \phi_{1}^{N} \cdots \phi_{n-1}^{N} o) + F_{1}.
 \end{equation}

We now define \[\begin{aligned}
S_{n, N} &:= \{g_{1}, \ldots, g_{2^{n}}\} = \{\phi_{1}^{2N} \cdots \phi_{n}^{2N} : \phi_{i} \in \{a, b\}\},\\
V(g_{i}^{\pm}) &:= \{x \in X : (x, g_{i}^{\pm 2} o)_{o} \ge d(o, g_{i}^{\pm 1} o)-F_{1}\}, \\
V'(g_{i}^{\pm}) &:=  \{x \in X : (x, g_{i}^{\pm 2} o)_{o} \ge d(o, g_{i}^{\pm 1} o)\}.
\end{aligned}\]
Our first claim is that $V(g_{1}^{+}), \ldots, V(g_{2^{n}}^{+}), V(g_{1}^{-}), \ldots, V(g_{2^{n}}^{-})$ are all disjoint. To show this, let $(\phi_{i})_{i=1}^{n}, (\psi_{i})_{i =1}^{n}$ be distinct sequences in $\{a, b\}^{n} \cup \{a^{-1}, b^{-1} \}^{n}$ and $\Phi = \phi_{1}^{2N} \cdots \phi_{n}^{2N}$, $\Psi = \psi_{1}^{2N} \cdots \psi_{n}^{2N}$. Let $t = \min \{ 1 \le i \le 10 : \phi_{i} \neq \psi_{i}\}$ and $w = \phi_{1}^{2N} \cdots \phi_{t-1}^{2N}$. Now suppose that a point $x \in X$ belongs to both $V(\Phi)$ and $V(\Psi)$. First, $x \in V(\Phi)$ implies  \[
(x, \Phi^{2} o)_{o} \ge d(o, \Phi o) - F_{1} \ge d(o, \phi_{1}^{2N} \cdots \phi_{t-1}^{2N} \phi_{t}^{N} o) = d(o, w \phi_{t}^{N} o)
\]
by Inequality \ref{eqn:nonDecrease}. Since $[o, \Phi^{2} o]$ is $F_{0}$-witnessed by $[wo, w\phi_{t}^{N} o]$, Fact \ref{lem:passerBy} asserts that $[o, x]$ is $H_{0}$-witnessed by $[wo, w\phi_{t}^{N} o]$. By a similar reason, we have $(x, \Psi^{2} o)_{o} \ge d(o, w\psi_{t}^{N} o)$ and that $[o, x]$ is $H_{0}$-witnessed by $[w o, w\psi_{t}^{N} o]$. Since $(\phi_{t} o, \psi_{t} o)_{o} < C_{0} < H_{0}$, Fact \ref{lem:concatUlt} implies that $[x, x]$ is $F_{2}$-witnessed by $[wo, w\psi_{t} o]$, whose length is at least $L_{5} >2F_{2}$: such $x$ does not exist.

The next claim is that if $x \notin V(g_{i}^{-})$, then $g_{i}^{2} x \in V'(g_{i})$. Indeed, we know that $(o, g_{i}^{2} o)_{g_{i} o} \le F_{0} \le F_{1}/2$ and  \[\begin{aligned}
(g_{i}^{2}x, g_{i}^{2} o)_{o} &= (x, o)_{g_{i}^{-2} o} = d(o, g_{i}^{-2} o) - (x, g_{i}^{-2} o)_{o}\\
& \ge d(o, g_{i}^{2} o) - d(o, g_{i} o) +F_{1} \ge d(o, g_{i}^{} o).
\end{aligned}
\]

Since $V'(g_{i}) \subseteq V(g_{i})$ and $V(g_{i}) \cap V(g_{i}^{-}) = \emptyset$, we can iterate this to deduce $g_{i}^{2k} x \in V'(g_{i})$ for $k > 0$. Similarly, if $x \notin V(g_{i})$, then $g_{i}^{-2k} x \in V'(g_{i}^{-})$ for $k > 0$. 

Now let $x, y\in X$. Since $\{V(g_{i}^{+}), V(g_{i}^{-})\}$ are disjoint, $y \in V(g_{i}^{-})$ for at most one $g_{i} \in S_{n, N}$ and $x \in V(g_{j}^{+})$ for at most one $g_{j} \in S_{n, N}$. Suppose $s=\phi_{1}^{2N} \cdots \phi_{n}^{2N} \in S_{n, N}$ is neither of them, and let $k>0$. We then have \[
(x, s^{2} o)_{o} < d(o, so) - F_{1}, \quad (s^{2k}y, s^{2} o)_{o} \ge d(o, s o).
\] As $[o, s^{2} o]$ is $F_{0}$-witnessed by $[s \phi_{n}^{-N} o, so]$, Fact \ref{lem:passerBy} implies that $[o, s^{2k}y]$ is $H_{0}$-witnessed by $[s \phi_{n}^{-N} o, so]$. Now if we suppose that $(x, s^{2k} y)_{o} > d(o, so)$, then $[o, x]$ is also $H_{1}$-witnessed by $[s \phi_{n}^{-N} o, so]$, again by Fact \ref{lem:passerBy}. Meanwhile: \begin{itemize}
\item if $X$ is a $\delta$-hyperbolic space, we deduce that \[\begin{aligned}
(x, s^{2} o)_{o}& \ge \min \{ (x, so)_{o}, (so, s^{2} o)_{o} \} - \delta \\
&= d(o, so) - \max\{ (x, o)_{so}, (s^{2}o, o)_{so}\} - \delta \\
&\ge d(o, so) - (H_{1} + F_{0} + \delta) \ge d(o, so) - F_{1}.
\end{aligned}
\]
\item if $X = \T(\Sigma)$, then we deduce that \[\begin{aligned}
(x, s^{2} o)_{o}& \ge d(o, so) - d(so, [o, x]) - d(so, [o, s^{2} o]) \\
&\ge d(o, so) - (H_{1} + F_{0}) \ge d(o, so) - F_{1}.
\end{aligned}
\]
\end{itemize}
In either case we obtain a contradiction. Hence, $(x, s^{2k} y)_{o} \le d(o, so)$.

Similarly, if $s \neq g_{i}$ such that $y \in V(g_{i}^{+})$ and $s \neq g_{j}$ such that $x \in V(g_{j}^{-})$, then $(x, s^{-2k} y)_{o} \le d(o, s^{-1} o)$ for all $k > 0$. Finally, note that $y = o$ cannot belong to any of $V(g_{j}^{\pm})$ since $d(o, g_{j}^{\pm 1}o) \ge L_{5}> F_{1}$ for any $g_{j} \in S_{n, N}$. This settles the desired claim.

\section{Sketch of the proof of Proposition \ref{prop:genericGekht}}\label{section:appendix}

We borrow the definitions and notations in \cite{gekhtman2018counting}.

In \cite{gekhtman2018counting}, the authors consider the automatic structure of $G$, a directed graph $\Gamma$ that records exactly one geodesic between $e$ and $g$ for each $g \in G$. Hence, the vertex set of its universal cover $\tilde{\Gamma}$ and $G$ has 1-1 correspondence. Let $LG$ be the set of vertices with large growth. 
For $g \in G$ and $0<\epsilon<1$, we denote by $\hat{g}_{\epsilon}$ the element along the path from $e$ to $g$ at distance $\epsilon n$ from $e$. Then for any $0<\epsilon<1$, the ratio\begin{equation}\label{eqn:gekht1}
\frac{\#\{g \in \partial B_{S}(n) : \hat{g}_{\epsilon} \notin LN\}}{\# \partial B_{S}(n)}
\end{equation}
decays exponentially (cf. \cite[Proposition 2.5]{gekhtman2018counting}).

The authors then construct a Markov chain on $\Gamma$ whose $n$-step distribution is denoted by $\Prob^{n}$. There exists $c>1$ such that for any $A \subseteq \tilde{\Gamma}$, the proportion of $A\cap LG$ in $\partial B_{S}(n)$ is at least $(1/c)\Prob^{n}(A)$ and at most $c\Prob^{n}(A)$. 

We now denote by $\mathcal{R}$ the set of recurrent vertices. For $v \in \mathcal{R}$, the loop semigroup $\Gamma_{v}$ associated to $v$ is nonelementary, i.e., there exist independent loxodromics $a_{v}, b_{v} \in \Gamma_{v}$ \cite[Corollary 6.11]{gekhtman2018counting}. Let us now condition on the paths growing from $v$. Let $n(k, v, \w)$ be the $k$-th return time to $v$ and $T_{v} = \E_{v} n(1, v, \w)$. Then for each $\epsilon> 0$, \begin{equation}\label{eqn:gekht2}
\Prob_{v}\left\{ \left|\frac{n(k, v, \w)}{k} -T_{v} \right| >\epsilon \right\}
\end{equation}
is exponentially decaying as $k \rightarrow \infty$ (cf. \cite[Lemma 6.13]{gekhtman2018counting}). For each $n$, we also define the last return time $\tilde{n}(\w) = \max\left( \{ n(k, v, \w):k \in \N\} \cap \{1, \cdots, n\}\right)$ to $v$. Then for each $\epsilon > 0$, \begin{equation}\label{eqn:gekht3}
\Prob^{n}_{v} \left\{ \frac{n-\tilde{n}(\w)}{n} > \epsilon \right\}
\end{equation}
decays exponentially.

We now strengthen Theorem 6.14 of \cite{gekhtman2018counting}. For each $\epsilon > 0$, \begin{equation}\label{eqn:gekht4}
\Prob_{v}\left\{ \left|\frac{d(\w_{n(k, v)}o, o)}{k} - l_{v} \right|> \epsilon\right\}
\end{equation}
decays exponentially as $k \rightarrow \infty$, since $\mu_{v}$ actually has finite exponential moment. The proof for deviations from above can be found in \cite{boulanger2020large}. For deviations from below, \cite{boulanger2020large} and \cite{gouezel2021exp} deals with the case that $X$ is Gromov hyperbolic. When $X$ is the Teichm{\"u}ller space, one can use Choi's modification of Gou{\"e}zel's construction in \cite{choi2021clt}. Now together with the control on quantities \ref{eqn:gekht2} and \ref{eqn:gekht3}, we obtain the following: for any $\epsilon>0$, \begin{equation}\label{eqn:gekht5}
\Prob_{v}^{n}\left\{ \left| \frac{d(\w_{n} o, o)}{n} - \frac{l_{v}}{T_{v}}\right| >\epsilon\right\}
\end{equation}
decays exponentially. Now the proof of \cite[Theorem 6.14]{gekhtman2018counting} shows that $l_{v}/T_{v}$ is uniform for all $v \in \mathcal{R}$, which we denote by $\lambda$. Since the arrival time at $\mathcal{R}$ (beginning at $e$) also has finite exponential moment, we conclude that \begin{equation}\label{eqn:gekht6}
\Prob_{e}^{n} \left\{ \left| \frac{d(\w_{n} o, o)}{n} -\lambda\right| >\epsilon\right\}
\end{equation}
decays exponentially. By combining this with the decay of \ref{eqn:gekht1}, we deduce that \[
\frac{\# \{ g \in \partial B_{S}(n) : |\frac{d(o, go)}{n}- \lambda|>\epsilon \}}{\# \partial B_{S}(n)}
\]
decays exponentially (cf. \cite[Theorem 7.3]{gekhtman2018counting}).

We now need to discuss translation lengths instead of displacements. For each recurrent component $C$ of $\Gamma$, we pick $v = v_{C} \in C$ and take a Schottky set as a subset of $\{w = g_{1} \cdots g_{n} : g_{i} = a_{v}\,\,\textrm{or}\,\, b_{v}\}$. We now consider the loop random walk generated by $\mu_{v}$: recall the decomposition of the random walk into usual steps and ``Schottky steps" for the pivot construction in \cite{gouezel2021exp} and \cite{choi2021clt}. 

Until step $T_{v} n/4$ in the loop random walk, we have $Kn$ slots for Schottky steps for some $K>0$ outside an event of exponentially decaying probability. Moreover, $T_{v}n/4$ steps in the loop random walk occur before step $n/2$ in the Markov process based at $v_{C}$, outside an event of exponentially decaying probability (quantity \ref{eqn:gekht2}). Finally, the Markov process beginning from $e$ arrives at $\{v_{C} : C\,\, \textrm{recurrent}\}$ within step $n/2$ outside an event of exponentially decaying probability. 

In summary, giving up an event of exponentially decaying probability, a random path in the Markov process has at least $Kn$ slots for Schottky loops based at some $v_{C}$. By pivoting the choice of Schottky loops at these slots, we can guarantee at least $K'n$ eventual pivots until step $n$ for some $K'>0$, outside an event of exponentially decaying probability.

Given these results, it now suffices to focus on the elements $g$ such that: \begin{enumerate}
\item $\check{g}_{1-\epsilon} \in LG$ and $d(o, \check{g}_{1-\epsilon} o) \ge (1-2\epsilon) \lambda n$,
\item $d(o, \check{g}_{\epsilon}o) \le 2\epsilon \lambda n$, and 
\item the subpath $[e, \check{g}_{\epsilon}]$ possesses at least $K' \epsilon n$ pivots for $[e, g]$.
\end{enumerate}
We then consider the equivalence class by pivoting at the first $K'\epsilon n$ pivots. By early pivoting, one can show that only few elements inside the equivalence class satisfy $\tau_{X}(g) \le (1-4 \epsilon - MK') n$, for some suitable $M>0$. By modulating $\epsilon$ and $K'$, we establish the desired result.

%
%

\medskip
\bibliographystyle{alpha}
\bibliography{pA}

\end{document}